\DeclareMathOperator{\vol}{vol}
\DeclareMathOperator{\spt}{spt}
\DeclareMathOperator{\ind}{ind}
\newcommand{\R}{\mathbb{R}}
\newcommand{\N}{\mathbb{N}}
\newcommand{\Z}{\mathbb{Z}}
\newcommand{\e}{\varepsilon}
\theoremstyle{plain}
\newcounter{alpha}
\newtheorem{prop_}{Proposition}[section]
\newtheorem{prop}[prop_]{Proposition}
\newtheorem{lemma}[prop_]{Lemma}
\newtheorem{definition}[prop_]{Definition}
\newtheorem{cor}[prop_]{Corollary}
\newtheorem{teo}[prop_]{Theorem}
\newtheorem*{obs}{Remark}
\title[\resizebox{5.2in}{!}{The Weyl Law for the phase transition spectrum and density of limit interfaces}]{The Weyl Law for the phase transition spectrum and density of limit interfaces}
\author{Pedro Gaspar} 
\address{Instituto de Matematica Pura e Aplicada (IMPA), Estrada Dona Castorina 
110, 22460-320 Rio de Janeiro, Brazil}
\email{phgms@impa.br}
\author{Marco A. M. Guaraco} 
\address{Department of Mathematics,
The University of Chicago,
5734 S University Ave,
Chicago, IL 60637}
\email{guaraco@math.uchicago.edu}
\thanks{The first author was partly supported by NSF grant DMS-1311795.}
\begin{document}

\begin{abstract} We prove a Weyl Law for the phase transition spectrum based on the techniques of Liokumovich-Marques-Neves. As an application we give phase transition adaptations of the proofs of the density and equidistribution of minimal hypersufaces for generic metrics by Irie-Marques-Neves and Marques-Neves-Song, respectively. We also prove the density of \textit{separating} limit interfaces for generic metrics in dimension 3, based on the recent work of Chodosh-Mantoulidis, and for generic metrics on manifolds containing only separating minimal hypersurfaces, e.g. $H_{n}(M,\Z_2)=0$, for $4\leq n+1\leq 7$. These provide alternative proofs of Yau's conjecture on the existence of infinitely many minimal hypersurfaces for generic metrics on each setting, using the Allen-Cahn approach.
\end{abstract}

\maketitle

\section{Introduction}
In this article we are interested in understanding the limit behavior of solutions to the elliptic \emph{Allen-Cahn equation} on a closed, orientable, Riemannian manifold $M^n$, $n\geq 3$. Namely, we will look at $u:M \to \R$ with
	\begin{equation} \label{eq:ac}
		-\e \Delta u + W'(u)/\e=0, 
	\end{equation}
on the limit when $\e$ goes to zero, where $W$ is a double-well potential, e.g. $W(u)=(1-u^2)^2/4$. This equation and its parabolic counterpart arise in the gradient theory of phase transition phenomena within the van der Waals-Cahn-Hilliard theory \cite{AllenCahn}. Its solutions are known to be related to critical points of the area functional since studied in the context of $\Gamma$-convergence by Modica-Mortola in \cite{ModicaMortola} (see also \cite{KohnSternberg}), where minimizers of the associated energy functional
	\begin{equation}
		E_\e(u) = \int_M \e\frac{|\nabla u|^2}{2} + \frac{W(u)}{\e}, \quad u \in H^1(M),
	\end{equation}
are shown to converge to minimizers of the area functional. The corresponding problem with constraint $\int_M u = c$ is related to constant mean curvature hypersurfaces (we refer the reader to Modica \cite{Modica} and Sternberg \cite{Sternberg}). Since then, strong parallels between these objects have been drawn, see e.g. the surveys \cite{Pacard, Savin} and the references therein.

For more general variational solutions of the Allen-Cahn equation results about the limit behavior were carried out by Hutchinson-Tonegawa \cite{HutchinsonTonegawa}, Tonegawa \cite{Tonegawa}, Tonegawa-Wickramasekera \cite{TonegawaWickramasekera} and the second author \cite{Guaraco}. Roughly speaking, it is known (see \cite{Guaraco} for the precise statement)

\

\noindent\textbf{Convergence Theorem.}\textit{
Let $M^{n+1}$ be a closed Riemannian manifold of dimension $n+1\geq 3$, and let $\{u_{\e_k}\}$ be a sequence of solutions to \eqref{eq:ac} in M with $\e=\e_k \downarrow 0$. Assume that the sequences $\sup_M|u_{\e_k}|$, $E_{\e_k}(u_{\e_k})$ and  $\operatorname{Ind}(u_{\e_k})$ are bounded, where $\operatorname{Ind}(u_\e)$ denotes the Morse index of $u_\e$ as a critical point of $E_\e$. Then as $\e_k \downarrow 0$ its level sets accumulate around a minimal hypersurface $\Gamma \subset M$ which is smooth and embedded outside a singular set of Hausdorff dimension at most $n-7$. Moreover, there are positive integers $m_1,\ldots, m_N$ such that
	\[\lim_k E_{\e_k}(u_{\e_k}) = 2\sigma \sum_{j=1}^N m_j \mathcal{H}^{n}(\Gamma_j),\]
where $\Gamma_1,\ldots, \Gamma_N$ are the connected components of $\Gamma$, and $\sigma = \int_{-1}^1 \sqrt{W(t)/2}\ dt$.
}

\

A minimal hypersurface $\Gamma$, produced in this way is called a \emph{limit interface} and the positive integers $m_j$ are called the \emph{multiplicities} of $\Gamma_j$. 

\begin{obs}
For $n+1=2$ similar conclusions hold, except the regularity of the limit interface. In this case, the limit varifold is supported in an union $\Gamma$ of geodesic arcs with at most $p=\limsup_k \operatorname{Ind}(u_{\e_k})$ junction points, according to \cite{Tonegawa}. It was proved recently by C. Mantoulidis \cite{Mantoulidis1} that if $p=1$ then $\Gamma$ is an immersed geodesic and the possible junction point is a transverse intersection.
\end{obs}

The lower semicontinuity of the index was proven first by Hiesmayr \cite{Hiesmayr}, for two-sided limit interfaces, and then by the first author \cite{Gaspar} in the general case. More precisely, if $\Gamma$ is the limit interface of a sequence of solutions $u_\e$, then $\operatorname{Ind}(\Gamma) \leq  \operatorname{Ind}(u_\e)$, for small $\e$.

Using the Convergence Theorem above, along with min-max techniques for semilinear PDEs, the second author was able to provide an alternative proof of the celebrated result of Almgren-Pitts and Schoen-Simon about the existence of closed minimal hypersurfaces in closed Riemannian manifolds.  This phase transition approach simplifies considerably the variational argument of Almgren-Pitts to prove the existence of a stationary limit but it relies on the regularity theory of Wickramasekera \cite{Wickramasekera}, which is a sharpening of the classical Schoen-Simon compactness theory for stable minimal hypersurfaces. 

In recent work, Chodosh-Mantoulidis \cite{ChodoshMantoulidis} were able to obtain stronger convergence estimates for the case $n+1=3$. Using the work of Ambrosio-Cabr\'e \cite{AmbrosioCabre} and building on the techniques from Wang-Wei \cite{WangWei}, they have shown that, in the situation of the theorem above and in $n+1=3$, level sets of the solutions converge as normal sheets towards a limit interface $\Gamma$. Even more, when $\Gamma$ is a nondegenerate limit interface then all the multiplicities must be one in addition to $\Gamma$ being separating and two-sided. 

Chodosh-Mantoulidis' results have many important consequences. First, they imply a strong version of the \emph{Multiplicty One Conjecture} of Marques and Neves \cite{MarquesNevesIndex} for dimension 3. The  general conjecture claims that for generic metrics in $M^{n+1}$, with $3 \leq n+1 \leq 7$, two-sided unstable components of closed minimal hypersurfaces obtained by min-max methods must have multiplicity one. However, in \cite{ChodoshMantoulidis} it is shown that the two-sided assumption is not necessary when dealing with unstable components of limit-interfaces, regardless of whether they come from a min-max construction or not. Additionally, for generic metrics or for Ricci positive metrics, they show that \textit{all} the components are two-sided and occur with multiplicity one. Second, these results also imply that in dimension 3 it is possible to avoid the use of Wickramasekera's regularity results, making the whole theory considerably more elementary. Finally, in \cite{ChodoshMantoulidis} they prove the upper semicontinuity of the index for any dimension, under the assumption that multiplicities are all one, i.e. if $\Gamma$ is the multiplicity one limit interface of a sequence of solutions $u_\e$, then $\operatorname{Ind}(\Gamma)+\operatorname{Nul}(\Gamma)\leq \operatorname{Ind}(u_\e) +\operatorname{Nul}(u_\e)$, for small $\e$. 

\

In \cite{GasparGuaraco}, the authors generalized the phase transitions approach to min-max theory for minimal hypersurfaces of \cite{Guaraco} to higher dimensional min-max families. More precisely, given $p \in \N$ and small $\e$, there is at least one $u_\e$ which is a solution to (1) with energy $c_\e(p)$ and $\operatorname{Ind}(u_\e) \leq p\leq \operatorname{Ind}(u_\e)+\operatorname{Nul}(u_\e)$. Additionally, the authors proved sublinear bounds for the energy levels of $E_\e$ and small $\e>0$, i.e. if one defines $\ell_p(M)=\lim_{\e \to 0}c_\e(p)$ then there is $C>0$, independent of $p$, such that $C^{-1} p^{\frac{1}{n+1}} \leq \ell_p(M) \leq C p^{\frac{1}{n+1}}$ (see Section \ref{sec:spectrum} for a precise statement). As a consequence, the Convergence Theorem and the upper bound for the index \cite{Gaspar} give the existence of a limit interface of mass $\ell_p(M)$, supported on an minimal hypersurface of index at most $p$, which is smooth and embedded  (away from a small singularity set in dimensions greater than 7). 

The proof of the sublinear bounds for $\ell_p(M)$ is inspired by the techniques used to obtain similar bounds for the spectrum of the volume functional due to Marques-Neves \cite{MarquesNevesInfinite}, which, in turn, are based on the work of Gromov \cite{GromovWaists} and Guth \cite{GuthWidth,Guth}. In light of this analogy, we refer to the sequence $\{\ell_p(M)\}_p$ as \textit{the phase transition spectrum of $M$}.

In \cite{Gromov}, M. Gromov studied several nonlinear analogues of the spectrum of the Laplacian operator in Riemannian manifolds, one of which is precisely the volume spectrum $\{\omega_p(M)\}$. Similarly to the eigenvalues of the Laplacian, the numbers $\omega_p(M)$, called the \emph{$p$-widths} of $M$, are min-max critical values, in this case, for the volume functional. They can be described in the framework of Almgren-Pitts Theory and, in this context, it is shown that $\omega_p(M)$ are achieved by the volume of minimal hypersurfaces, possibly with a small singular set, and index at most $p$. We refer to \cite{Guth,MarquesNevesInfinite,MarquesNevesIndex} for the precise definitions and statements of the theorems regarding the $p$-widths. Motivated by Weyl's asymptotic law for the eigenvalues of the Laplacian, Gromov conjectured \cite{GromovSingularities, GromovWaists} that the volume spectrum should satisfy a similar Weyl law. This result was recently confirmed by Liokumovich-Marques-Neves in \cite{LioMarNev} and it has been since used to derive deep geometric consequences about minimal hypersurfaces \cite{IrieMarNev,MarNevSon}, most notably the confirmation for generic metrics of Yau's conjecture, concerning the existence of infinitely many minimal hypersurfaces.

\

In this work we show that a Weyl Law also holds for $\{\ell_p(M)\}_p$. Our main result is:

\

\noindent\textbf{Main Theorem.} (Weyl Law for the phase transition spectrum)\textit{  There exists $\tau(n)>0$ such that
	\[\lim_{p \to +\infty} p^{-\frac{1}{n+1}} \ell_p(M) = \tau(n)\vol(M,g)^{\frac{n}{n+1}},\]
for all compact Riemannian manifolds $(M^{n+1},g)$ possibly with a nonempty piecewise smooth boundary.}

\

One is led then to the problem of describing the limit interfaces which arise from the Allen-Cahn strategy. For results along these lines see, for instance, \cite{Mantoulidis1,WangWei}, for finite index solutions on surfaces, \cite{GasparGuaraco} for least area limit interfaces (in the sense of Mazet-Rosenberg \cite{MazetRosenberg}) and \cite{ChodoshMantoulidis} for $n+1=3$.

\

A \textit{generic} set on a Banach space is one that is the intersection of countably many open and dense sets. By the Baire Category Theorem such a set must be dense. From the work of B. White \cite{WhiteBumpy} it is known that there exists set of metrics on $M$, generic in the $C^\infty$ topology, for which all minimal hypersurfaces are nondegenerate. Combining this fact, with the finite index min-max constructions by the authors in \cite{GasparGuaraco}, the multiplicity one and upper semicontinuity results from \cite{ChodoshMantoulidis}, one obtains an alternative proof of Yau's conjecture on the existence of infinitely many minimal surfaces in 3-dimensional manifolds. For dimensions $3\leq n+1\leq 7$ and generic metrics, the conjecture follows from the fact that the union of all closed, smooth and embedded minimal hypersurfaces is dense, as it was proved by Irie-Marques-Neves \cite{IrieMarNev} using the Morse theoretic techniques developed by Marques-Neves, together with several other authors, in recent years \cite{LioMarNev,MarquesNevesIndex,MarquesNevesInfinite}. The main tool is the Weyl Law for the spectrum of the volume functional proved by Liokumovich-Marques-Neves \cite{LioMarNev}, which describes the asymptotic behavior of the areas of the min-max minimal hypersurfaces constructed by Marques-Neves \cite{MarquesNevesInfinite}.

As an application of our main theorem, we also obtain an alternative proof of the density and equidistribution of min-max minimal hypersurfaces of \cite{IrieMarNev}, which also implies Yau's conjecture for dimensions $3\leq n+1\leq 7$. Stretching the techniques further and using the constructions of Pacard-Ritor\'e \cite{PacardRitore} and the convergence results from Chodosh-Mantoulidis \cite{ChodoshMantoulidis}, as well as a local version of the Bumpy Metrics Theorem of B. White \cite{White1}, we also obtain the density of separating limit interfaces for generic metrics (or for Ricci positive metrics) in 3-dimensional manifolds. The Local Bumpy Metric Theorem was announced by B. White in \cite{WhiteBumpy}. The proof follow the arguments in \cite{White1}. Here we have decided to include the setting for the local version we need, i.e. case of hypersurfaces, as well as the necessary modifications for its proof. 

Similar ideas show that limit interfaces are dense for generic metrics in manifolds that contain no non-separating minimal hypersurfaces, e.g. $H_{n}(M,\Z_2)=0$.

\subsection*{Outline of the paper.} In Section \ref{sec:spectrum} we define the phase transition spectrum of an open set in a Riemannian manifold, which is obtained as the volume of limit interfaces of min-max solutions to equation (1). In Section \ref{sec:weyl} we prove the Weyl Law for open sets of the Euclidean space. In Section \ref{sec:weylman} we prove the Weyl Law for closed Riemannian manifolds. Finally, in Section \ref{sec:density} we apply the result to the study of the density and equidistribution of minimal hypersurfaces and the density of limit interfaces and the local version of the bumpy metrics theorem.

\subsection*{Acknowledgements.} Both authors would like to thank Fernando C. Marques and Andr\'e Neves for useful discussions and their interest in this work. The first author is grateful to the Department of Mathematics at Princeton University for its hospitality. Part of this work and the first drafts were carried out while visiting during the academic year of 2017-18. The second author would like to thank FIM - ETH, Zurich for their kind hospitality, where this work was finished during a visit in Spring 2018.

\section{The phase transition spectrum} \label{sec:spectrum}
In this section we recall the definition of the min-max values $\{c_\e(p)=c_\e(p,M)\}_{p \in \N}$ for the energy functional and we define the phase transition spectrum, following the construction of \cite{GasparGuaraco}. Let $(M^{n+1},g)$, be a compact Riemannian manifold of dimension $n+1 \geq 3$ with a possibly nonempty piecewise smooth boundary. We will assume hereafter that $W \in C^3(\R)$ is an \emph{even} nonnegative function which satisfies the following condition:

\begin{itemize}
	\item[(A)] $W$ is a \emph{double-well potential}, namely it has exactly three critical points, two of which are non-degenerate minima at $\pm 1$ with $W(\pm 1)=0$ and $W''(\pm 1)>0$, and the third is a local maximum point at the origin.
\end{itemize}

We intend to study the critical points of the Allen-Cahn \emph{energy functional}
	\begin{align*}
		E_\e(u) = \int_M \frac{\e|\nabla u|^2}{2} + \frac{W(u)}{\e}, \quad u \in H^1(M),
	\end{align*}
which are precisely the solutions to the equation \eqref{eq:ac}, and their properties as the parameter $\e$ converges to $0$. Since $E_\e$ is an even functional, we can use families of symmetric, compact and topologically non-trivial subsets of $H^1(M)$ to detect critical points of this functional. This approach was adopted in \cite{GasparGuaraco} and it is inspired by the multiparameter min-max construction for the area functional of \cite{MarquesNevesInfinite} (see also \cite{Guth,Gromov}).

\subsection{A topological \texorpdfstring{$\Z/2\Z$}{Z/2Z} index} Recall that to each (para)compact symmetric $A \subset H^1(M)$ we can associate a nonnegative integer which we will call its $\Z/2\Z$ (or \emph{cohomological}) \emph{index} in the following manner. If $0 \notin A$ one verifies that there exists an odd continuous map $f:A \to S^N$ for some $N \in \N\cup\{\infty\}$. Here we consider $S^N$, $N \in \N$, as finite dimensional spheres in $S^\infty = \bigcup_k S^k$ with the topology given by the direct limit of $\{S^k\}_{k \in \N}$ ordered by the inclusions $S^k \to S^{k'}$ for $k \leq k'$. Denote by $\tilde f: \tilde A \to \R\mathbb{P}^\infty$ the induced continuous map, where $\tilde A$ and $\R\mathbb{P}^\infty$ are the orbit spaces $A/\{x \sim -x\}$ and $S^\infty/\{x\sim -x\}$ respectively. Recall the cohomology ring of the infinite dimensional projective space $\R\mathbb{P}^{\infty}$ with $\Z/2\Z$ coefficients is isomorphic to the polynomial ring $(\Z/2\Z)[w]$, with a generator $w \in H^1(\R\mathbb{P}^\infty, \Z/2\Z)$. Moreover, the map $\tilde f$ is unique modulo homotopy, thus we may define the cohomological index of $A$ by
	\[\ind(A) := \sup\{k : \tilde f^*(w^{k-1}) \neq 0 \in H^{k-1}(A,\Z/2\Z)\}.\]
We set $w^0=1\in H^0(\R\mathbb{P}^\infty,\Z/2\Z)$ and adopt the convention $\ind(\emptyset)=0$. Moreover we let $\ind(A)=+\infty$ whenever $0 \in A$. This index was studied was studied by Fadell and Rabinowitz in \cite{FadellRabinowitz1}, in the context of bifurcation theory, among others, and it measures the cohomological non-triviality of $A$. The set of all paracompact symmetric subsets of $H^{1}(M)$ will be denoted by $\mathcal{C}$. We list here some of the properties of $\ind$.

\begin{enumerate}
	\item[(I1)](Normalization) $\ind(A)=0$ if, and only if, $A=\emptyset$.
	\item[(I2)](Monotonicity) If $A_1,A_2 \in \mathcal{C}$ and there exists an equivariant continuous map $A_1 \to A_2$, then $\ind(A_1) \leq \ind(A_2)$.
	\item[(I3)](Continuity) If  $X \subset A$ is an invariant closed subset of $A$, there exists an invariant neighborhood $V \subset A$ of $X$ such that $\ind(X) = \ind(\overline V)$.
	\item[(I4)](Subadditivity) For all paracompact symmetric $A_1,A_2 \subset H^1(M)$ we have $\ind(A_1 \cup A_2) \leq \ind(A_1) + \ind(A_2)$.
	\item[(I5)] For every paracompact symmetric $A \subset H^1(M)$, if $\ind(A)\geq 1$, then the orbit space $\tilde A$ has infinitely many elements.
	\item[(I6)] It holds $\ind(A)<+\infty$ for all \emph{compact} $A\subset H^1(M)\setminus\{0\}$. More generally $\ind(A) \leq \dim A$ where $\dim A$ is the covering dimension of $A$.
\end{enumerate}

More details on the construction of this invariant may be found also in the Appendix of \cite{GasparGuaraco}. 

\subsection{Min-max construction for the energy functional} We can use the index $\ind$ to develop a $\Z/2\Z$-equivariant min-max construction for $E_\e$ following the general setting of \cite{Ghoussoub}. For each $p \in \N$ let
	\[\mathcal{F}_p(M) = \mathcal{F}_p := \{ A \subset H^1(M) : A \ \mbox{compact, symmetric,} \ \ind(A) \geq p+1  \}.\]
One verifies that $\mathcal{F}_p$ is a $p$-dimensional $\Z/2\Z$-cohomological family in the sense of \cite{Ghoussoub}, so we may expect that the associated \emph{min-max values}
	\[ c_\e(p,M):=\inf_{A \in \mathcal{F}_p(M)} \sup_{u \in A} E_\e(u), \quad p \in \N, \]
are achieved by critical points of $E_\e$. Here some remarks are in order. Firstly we are interested in \emph{non-constant} critical points, so we may expect $c_\e(p,M)>0$, what can be proved using the same strategy of \cite[\S 4]{Guaraco}, and that $E_\e(u)=c_\e(p,M)$ for a nonzero critical point. In this regard we also note that the results of \cite{Ghoussoub} cannot be directly applied to $E_\e:H^1(M) \to \R$, since the $\Z/2\Z$ action $x \mapsto -x$ fixes the origin. In light of these observations, we have the following existence theorem.

We denote by $K_c$, for $c \in \R$, the set of critical points for $E_\e$ with energy $c$. Moreover given $m \in \N$ we let $K_c(m)=\{u \in K_c : m(u)\leq m\}$, where $m(u)$ denotes the Morse index of $u$.

\begin{teo}[\cite{GasparGuaraco}] \label{teo:exist} Fix $\e>0$.
	\begin{enumerate}
	\item[(1)] For every $p \in \N$ it holds $0 < c_\e(p,M) \leq E_\e(0) = \vol(M,g)W(0)/\e$.
	\item[(2)] If $c_\e(p,M)<E_\e(0)$ then there is a critical point $u \in K_{c_\e(p,M)}(p)$ such that $|u|\leq 1$. Moreover if $c_\e(p,M) = c_\e(p+k,M)$ for some $k \in \N$ then
		\[\ind(K_{c_\e(p,M)}(p+k)) \geq k+1.\]
	\item[(3)] There is $p_0=p_0(\e,M) \in \N$ such that $c_\e(p,M)=E_\e(0)$ for all $p \geq p_0$.
\end{enumerate}
\end{teo}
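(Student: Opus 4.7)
The plan is to carry out a $\Z/2\Z$-equivariant minimax argument in the style of \cite{Ghoussoub}, with the caveat that $0\in H^1(M)$ is a fixed point of the antipodal action and so must be handled separately.

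For the upper bound in (1), the convention $\ind(\{0\})=+\infty$ places the singleton $\{0\}$ into every $\mathcal{F}_p$, so $c_\e(p,M)\leq \sup_{u\in \{0\}} E_\e(u) = E_\e(0)$. For the positivity, I would show that $\sup_A E_\e \leq \delta$ forces $\ind(A)\leq 1$ once $\delta$ is small enough: for such $A$ every $u \in A$ has both $\int_M W(u)/\e$ and $\int_M |\nabla u|^2$ small, so by a Modica--Mortola--type estimate (a phase transition carries a definite energy cost) the function $u$ must be $L^2$-close to exactly one of the two constants $\pm 1$. Then $\sigma(u) := \mathrm{sign} \int_M u$ is a well-defined continuous odd map $A \to S^0$, forcing $\ind(A)\leq 1$. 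Hence $c_\e(p,M) \geq \delta >0$ for every $p \geq 1$.

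For (2), the hypothesis $c_\e(p,M) < E_\e(0)$ allows one to work strictly below the bad critical value coming from $u \equiv 0$. On sublevel sets $\{E_\e \leq c\}$ with $c < E_\e(0)$ the Palais--Smale condition holds (elliptic regularity plus the compactness $H^1(M)\hookrightarrow L^q(M)$ for $q<2^*$), and a standard $\Z/2\Z$-equivariant pseudo-gradient deformation shows that $c_\e(p,M)$ is a critical value. The Morse index bound $m(u)\leq p$ is the usual consequence of the minimax characterization via the negative second variation cone. To force $|u|\leq 1$, apply the truncation $u\mapsto \max(\min(u,1),-1)$ to any almost optimal family $A$: this map is continuous, odd, and non-increasing for $E_\e$ (since $W$ is minimized at $\pm 1$ and truncation is $1$-Lipschitz), and preserves the cohomological index. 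For the multiplicity claim, set $c = c_\e(p,M) = c_\e(p+k,M)$ and suppose for contradiction that $\ind(K_c(p+k))\leq k$. Property (I3) yields a symmetric open neighborhood $V$ of $K_c(p+k)$ with $\ind(\overline V) \leq k$, and the equivariant deformation lemma provides an odd continuous $\eta$ pushing $\{E_\e \leq c+\tau\}\setminus V$ into $\{E_\e \leq c-\tau\}$ for some small $\tau>0$. Choosing $A \in \mathcal{F}_{p+k}$ with $\sup_A E_\e \leq c+\tau$ and setting $\widetilde A := \eta(A\setminus V)$, monotonicity (I2) and subadditivity (I4) give
\[
\ind(\widetilde A) \geq \ind(A\setminus V) \geq \ind(A) - \ind(\overline V) \geq (p+k+1) - k = p+1,
\]
so $\widetilde A \in \mathcal{F}_p$ with $\sup_{\widetilde A} E_\e \leq c-\tau$, contradicting the definition of $c_\e(p,M)=c$.

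The main obstacle is (3), the saturation $c_\e(p,M)=E_\e(0)$ for large $p$. The goal is a uniform bound $\ind(\{E_\e \leq E_\e(0)-\delta\}) \leq p_0$ for all $\delta>0$. Near $u\equiv 0$ the Hessian $\delta^2 E_\e(0) = -\e\Delta + W''(0)/\e$ is an elliptic self-adjoint operator with a finite-dimensional nonpositive spectrum of dimension $m_0 = m(0)+\mathrm{nul}(0)$; consequently a deleted neighborhood of $0$ in $\{E_\e < E_\e(0)\}$ equivariantly retracts onto a cone over $S^{m_0-1}$. Away from $0$ the negative gradient flow pushes $\{E_\e \leq E_\e(0)-\delta\}$ into the very-low-energy regime already treated in (1), whose index is at most $1$. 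Gluing the local contribution at $0$ with the low-energy contribution via an equivariant deformation and (I4) yields a uniform bound $\ind(\{E_\e \leq E_\e(0)-\delta\}) \leq m_0 + 1 =: p_0$; hence for $p\geq p_0$ no element of $\mathcal{F}_p$ can sit inside this sublevel set, forcing $c_\e(p,M)\geq E_\e(0)-\delta$. Sending $\delta \downarrow 0$ and combining with (1) yields equality. The technical difficulty is carrying out this equivariant Morse-theoretic decomposition cleanly in the presence of the fixed point $0$ and without any genericity assumption on $W$ or the metric.
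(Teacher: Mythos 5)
Note first that the paper does not prove this theorem at all: it is imported from \cite{GasparGuaraco} (the only in-text addition is the Remark that the adaptation to nonempty Lipschitz boundary is straightforward), so your proposal has to be measured against the proof in that reference. Your treatment of (1) is essentially that argument ($\{0\}\in\mathcal{F}_p$ for the upper bound; a Modica--Mortola lower bound plus the odd map $u\mapsto \mathrm{sign}\int_M u$ into $S^0$ for positivity), and the broad outline of (2) (PS below $E_\e(0)$, equivariant deformation, truncation to get families in $\{|u|\le 1\}$) is also the standard route. Two caveats on (2): producing a critical point with $|u|\le 1$ is done in \cite{GasparGuaraco} by modifying $W$ outside $[-1,1]$ and invoking the maximum principle, not merely by truncating the families (truncating the competitors does not by itself constrain the critical point the deformation argument produces); and your multiplicity argument applies the deformation lemma to a neighborhood $V$ of $K_{c}(p+k)$ only, whereas the standard deformation requires a neighborhood of the entire critical set $K_c$ at that level --- the index-restricted statement $\ind(K_c(p+k))\ge k+1$ is exactly the refined conclusion of Ghoussoub's min-max theory (or a Marino--Prodi type perturbation), which is what \cite{GasparGuaraco} actually uses, and your sketch silently assumes it.

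The genuine gap is part (3). Your plan is to bound $\ind(\{E_\e\le E_\e(0)-\delta\})$ by gluing a local model at $0$ (index $m_0$) with the claim that ``away from $0$ the negative gradient flow pushes $\{E_\e\le E_\e(0)-\delta\}$ into the very-low-energy regime.'' That step fails: the sublevel set contains all critical points with energy in $(0,E_\e(0)-\delta)$ --- and the whole content of the theorem is that there are many of them, at the levels $c_\e(p)$ --- so the flow gets stuck there and no equivariant deformation onto (cone over $S^{m_0-1}$) $\cup$ (low-energy set) exists; consequently the bound $m_0+1$ is not established by this decomposition. The argument in \cite{GasparGuaraco} is different and direct: after truncating to $|u|\le 1$ one has a bound of the form $W(s)\ge W(0)-Cs^2$ on $[-1,1]$, so $E_\e(u)\ge E_\e(0)$ whenever the $L^2$-projection of $u$ onto the span $V_\e$ of the first $N_\e$ Laplace eigenfunctions vanishes (with $N_\e$ the number of eigenvalues $\lambda_j<2C/\e^2$, say). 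Hence any compact symmetric $A$ with $\sup_A E_\e<E_\e(0)$ admits, after truncation, the odd continuous map $u\mapsto P_{V_\e}u/\|P_{V_\e}u\|$ into $S^{N_\e-1}$, so $\ind(A)\le N_\e$ by (I2); therefore $c_\e(p,M)\ge E_\e(0)$, and so $c_\e(p,M)=E_\e(0)$, for all $p\ge p_0:=N_\e$. This spectral-projection comparison is the missing idea; without it your part (3) does not go through.
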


\begin{obs}
In \cite{GasparGuaraco}, this theorem was proved for compact manifolds with empty boundary. The adaptations to the case $\partial M \neq \emptyset$ and the boundary is Lipschitz are straightforward. In this case the solutions $u \in K_{c_\e(p,M)}$ are weak solutions of the Neumann problem associated to the \eqref{eq:ac} with $|u|\leq 1$. By elliptic regularity we see that $u$ is of class $C^3$ in the interior of $\Omega$. Moreover if $\partial M$ is sufficiently regular, we get also $u \in C^3(\overline \Omega)$. The existence question for the Neumann problem in Euclidean domains was previously tackled by G. Vannella in \cite{Vannella}.
\end{obs}

By the Convergence Theorem one expects to obtain minimal hypersurfaces from $u_\e \in K_{c_\e(p,M)}$ by making $\e \downarrow 0$, for a fixed $p \in \N$. For this purpose we need to check that $c_\e(p)$ stays bounded away from both $0$ and $+\infty$ for small $\e>0$. This is the content of the following theorem proved in \cite{GasparGuaraco} and inspired by the works of Gromov \cite{Gromov}, Guth \cite{GuthWidth, Guth} and Marques-Neves \cite{MarquesNevesInfinite}. We note that, in addition to providing these uniform bounds for $c_\e(p)$, the theorem describes how $\lim_{\e}c_{\e}(p,M)$ grows with respect to $p$.

\begin{teo}[\cite{GasparGuaraco}] \label{teo:bounds}
There exists a constant $C=C(M)>1$ such that 
	\[ C^{-1}p^{\frac{1}{n+1}} \leq \liminf_{\e \to 0^+} c_\e(p,M) \leq \limsup_{\e \to 0^+}c_\e(p,M) \leq Cp^{\frac{1}{n+1}}.\]
\end{teo}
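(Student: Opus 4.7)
The plan is to prove the two bounds separately, adapting to the phase transition setting the $p$-width strategy of Gromov--Guth and Marques--Neves \cite{MarquesNevesInfinite} for the area functional, with Modica--Mortola profiles providing the bridge between Allen--Cahn energies and hypersurface areas.

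For the \emph{upper bound}, I would build an explicit test family. First, partition $M$ (up to a negligible set) into $p$ open regions $U_1,\dots,U_p$ with $\vol(U_i)\sim\vol(M)/p$, each admitting a one-parameter family of embedded hypersurfaces $\{\Sigma_i(t)\}_{t\in[-1,1]}$ sweeping out $U_i$ with $\mathcal{H}^n(\Sigma_i(t))\leq C\vol(U_i)^{n/(n+1)}$; such a sweepout is provided by an iterated isoperimetric bisection. On each $U_i$ form the standard Modica--Mortola profile $\gamma_i(t)(x)\approx \tanh(\mathrm{dist}(x,\Sigma_i^{+}(t))/(\e\sqrt 2))$, cut off so as to vanish outside $U_i$ and satisfy $\gamma_i(\pm 1)\equiv\pm 1$ on $U_i$, so that the classical computation yields $E_\e(\gamma_i(t))\leq 2\sigma\mathcal{H}^n(\Sigma_i(t))+o_\e(1)$. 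Assembling these into
\[F(t_1,\dots,t_p)=\sum_{i=1}^p\gamma_i(t_i),\qquad (t_1,\dots,t_p)\in[-1,1]^p,\]
and passing to a compact symmetric $A\subset H^1(M)\setminus\{0\}$ via an odd join-type identification on $\partial[-1,1]^p$, the disjointness of supports combined with properties (I2) and (I6) yields $\ind(A)\geq p+1$, while $\sup_A E_\e\leq \sum_i\sup_t E_\e(\gamma_i(t))\leq Cp\,(\vol(M)/p)^{n/(n+1)}=C'\vol(M)^{n/(n+1)}p^{1/(n+1)}$.

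For the \emph{lower bound}, I would transfer any $A\in\mathcal{F}_p$ into a mod-$2$ $p$-sweepout in the Almgren--Pitts sense. The Modica--Mortola coarea inequality
\[\int_{-1}^1\mathcal{H}^n(\{u=s\})\sqrt{2W(s)}\,ds\leq E_\e(u),\]
combined with an averaging argument in $s$, selects a level $s_0\in(-1,1)$ so that, after a small $H^1$-smoothing, the assignment $\Phi:A\to\mathcal{Z}_n(M;\Z/2)$, $\Phi(u)=\partial^*\{u>s_0\}$, is continuous and satisfies $\mathbf{M}(\Phi(u))\leq CE_\e(u)$. The crucial claim is that $\Phi$ is a $p$-sweepout, i.e.\ $\Phi^*(\bar\lambda^p)\neq 0$, where $\bar\lambda\in H^1(\mathcal{Z}_n(M;\Z/2);\Z/2)$ is the generator. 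This is verified by checking that the composition of $\Phi$ with the classifying map $\mathcal{Z}_n(M;\Z/2)\to\R\mathbb{P}^\infty$ representing $\bar\lambda$ is homotopic to the odd classifying map $A\to S^\infty/\Z_2$ provided by $\ind(A)\geq p+1$. The lower bound for the volume spectrum of \cite{MarquesNevesInfinite} then gives $C^{-1}p^{1/(n+1)}\leq\omega_p(M)\leq\sup_{u\in A}\mathbf{M}(\Phi(u))\leq C\sup_AE_\e$, and taking infimum over $A\in\mathcal{F}_p$ yields $c_\e(p,M)\geq C''^{-1}p^{1/(n+1)}$ uniformly in $\e>0$.

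The \emph{main obstacle} is the construction and index-theoretic verification of $\Phi$. One must choose the slicing level $s_0$ simultaneously for all $u\in A$ (handled by averaging in $s$, at the cost of a constant), extend $u\mapsto\partial^*\{u>s_0\}$ continuously despite potential singularities of the reduced boundary at exceptional levels (handled by an $H^1$-regularization and a flat-norm approximation), and compare the Fadell--Rabinowitz index on $A\subset H^1(M)\setminus\{0\}$ with the sweepout cohomology generated by $\bar\lambda$. This last step is the conceptual heart of the argument, as it translates the analytic notion of topological nontriviality used to define $\mathcal{F}_p$ into the geometric measure theory framework in which the $p$-width bound of \cite{MarquesNevesInfinite} is formulated.
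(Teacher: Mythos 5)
The paper does not give an independent proof of this statement: it is quoted from \cite{GasparGuaraco}, and the remark following it only explains the adjustments needed when $\partial M\neq\emptyset$. The proof being invoked has a different structure from yours on both sides. The upper bound there comes from an explicit odd family parametrized by the sphere $S^p$ (built on a fine cubulation at scale $\sim p^{-1/(n+1)}$ following the Guth/Marques--Neves bend-and-cancel sweepouts, composed with the one-dimensional profile $\psi_\e$), so that $0$ is avoided, $\ind\geq p+1$ follows from (I2) applied to the odd parametrization by $S^p$, and the energy is $\leq 2\sigma\cdot(\text{area of the swept hypersurfaces})+o(1)\leq Cp^{1/(n+1)}$. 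The lower bound is a direct Lusternik--Schnirelmann argument inside the Allen--Cahn framework: pick $p$ disjoint balls of radius $r_p=\nu p^{-1/(n+1)}$, use the energy density estimate of \cite[Lemma 5.2]{GasparGuaraco} to bound the index of the set of $u\in A$ with small energy in a given ball, and conclude by subadditivity of $\ind$ that some $u\in A$ has energy $\gtrsim r_p^{n}$ in every ball, whence $E_\e(u)\gtrsim p\,r_p^n=\nu^n p^{1/(n+1)}$; no reduction to the Almgren--Pitts widths is made.

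Your proposal has genuine gaps on both halves. For the upper bound, the cube parametrization cannot work as stated: if $F(t)=\sum_i\gamma_i(t_i)$ is odd on $[-1,1]^p$ then $F(0,\dots,0)=0$, so $0\in A$ and $\sup_A E_\e\geq E_\e(0)=W(0)\vol(M,g)/\e\to\infty$, which destroys the uniform-in-$\e$ bound you need for the $\limsup$; if instead you only use $\partial[-1,1]^p\cong S^{p-1}$ (the ``join-type identification''), monotonicity gives at best $\ind(A)\geq p$, one short of admissibility for $\mathcal{F}_p$ (and note (I6) is an \emph{upper} bound on $\ind$, so it cannot help here; also $\gamma_i(\pm1)\equiv\pm1$ on $U_i$ together with $\gamma_i\equiv0$ outside $U_i$ is not an $H^1$ function, though that is fixable by a boundary layer). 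Obtaining $\ind\geq p+1$ while avoiding $0$ genuinely requires $p+1$ parameters, i.e.\ an $S^p$-type family as in the cited proof. For the lower bound you take a genuinely different route — reducing to $\omega_p\geq C^{-1}p^{1/(n+1)}$ via a level-set map into $\mathcal{Z}_n(M;\Z_2)$, i.e.\ essentially the comparison $\omega_p\leq\underline\ell_p$ which the paper records — and this is legitimate in principle, but the two steps you flag as the ``main obstacle'' are precisely what is unproven: a fixed level $s_0$ does not make $u\mapsto\partial^*\{u>s_0\}$ continuous in the flat topology (continuity fails where $|\{u=s_0\}|>0$, and choosing $s_0$ depending on $u$ breaks well-definedness, so one needs Almgren--Pitts-type discretization/interpolation, plus the no-concentration-of-mass condition in the definition of $\omega_p$); and the claim that $\ind(A)\geq p+1$ forces $\Phi^*(\bar\lambda^p)\neq0$ requires identifying the $\Z/2\Z$-cover pulled back by $\Phi$ with $A\to\tilde A$, which is delicate since $\{-u>s_0\}$ is the complement of $\{u>s_0\}$ only up to $\{|u|\leq s_0\}$. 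The ball-packing argument of \cite{GasparGuaraco} sidesteps all of these issues, which is presumably why it is the proof the paper relies on.
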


\begin{obs} Once again these bounds are proved in \cite{GasparGuaraco} for closed manifolds. The proof of the upper bounds for the case of nonempty piecewise smooth $\partial M$ are the same, given the triangulability of such manifolds. The proof of lower bounds are also similar. In fact, the energy density estimate of \cite[Lemma 5.2]{GasparGuaraco} still holds for small balls contained in $M\setminus \partial M$. Moreover for each integer $p$ we can pick $p$ disjoint balls in $M\setminus \partial M$ of radius $r_p=\nu p^{-1/(n+1)}$ for a small $\nu>0$ depending on $M$. Since the variational construction for the energy functional works verbatim the rest of the proof can be carried out in the context of nonempty boundary.
\end{obs}

As an interesting consequence of Theorems \ref{teo:exist} and \ref{teo:bounds} we remark that the number of solutions of \eqref{eq:ac} grows to $+\infty$ as $\e$ goes to $0$, since $E_\e(0)\uparrow +\infty$ as $\e \downarrow 0$. Regarding minimal hypersurfaces in closed manifolds, the Convergence Theorem (see \cite{Guaraco}) and the index bounds of \cite{Gaspar} imply:

\begin{cor}[\cite{Gaspar,GasparGuaraco}] \label{cor:min}
Assume $M^{n+1}$ is closed and that $n+1 \geq 3$, fix $p \in \N$ and choose $\{\e_k\}_{k \in \N}$ such that $c_{\e_k}(p,M) \to c(p,M)$. There exists an integral varifold $V_p$ such that
	\begin{enumerate}
		\item[(i)] $||V_p||(M) = c(p,M)/2\sigma$.
		\item[(ii)] $V_p$ is stationary in $M$.
		\item[(iii)] $\mathrm{sing} V$ has Hausdorff dimension $\leq n-7$.
		\item[(iv)] $\mathrm{reg} V$ is an embedded minimal hypersurface of Morse index $\leq p$.
	\end{enumerate}
\end{cor}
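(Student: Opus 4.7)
The plan is to pair the existence results for critical points of $E_\varepsilon$ from Theorem \ref{teo:exist} with the boundedness of the min-max values from Theorem \ref{teo:bounds}, and then feed the resulting sequence of solutions into the Convergence Theorem, closing with the lower semicontinuity of the Morse index for limit interfaces.

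First I would fix $p \in \N$ and a sequence $\e_k \downarrow 0$ realizing $c_{\e_k}(p,M) \to c(p,M)$. Since $M$ is closed and $n+1\geq 3$, Theorem \ref{teo:bounds} provides a uniform upper bound $c_{\e_k}(p,M) \leq C p^{1/(n+1)}$, while $E_{\e_k}(0) = \vol(M,g)W(0)/\e_k \to +\infty$; thus for $k$ large we have $c_{\e_k}(p,M) < E_{\e_k}(0)$, and Theorem \ref{teo:exist}(2) supplies critical points $u_{\e_k} \in K_{c_{\e_k}(p,M)}(p)$ with $|u_{\e_k}| \leq 1$. This sequence satisfies the three hypotheses of the Convergence Theorem: $\sup_M|u_{\e_k}|\leq 1$, $E_{\e_k}(u_{\e_k}) = c_{\e_k}(p,M)$ is bounded, and $\operatorname{Ind}(u_{\e_k})\leq p$.

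Applying the Convergence Theorem, after passing to a subsequence, the level sets of $u_{\e_k}$ accumulate on a minimal hypersurface $\Gamma = \bigcup_{j=1}^N \Gamma_j$, smooth and embedded away from a singular set of Hausdorff dimension at most $n-7$, and there exist positive integers $m_1,\dots,m_N$ with
\[
\lim_k E_{\e_k}(u_{\e_k}) = 2\sigma \sum_{j=1}^N m_j \mathcal H^n(\Gamma_j).
\]
Define $V_p$ to be the integral varifold associated to $\Gamma$ with the multiplicities $m_j$. Then (i) follows from the identity above together with $E_{\e_k}(u_{\e_k}) = c_{\e_k}(p,M) \to c(p,M)$, so that $\|V_p\|(M) = \sum_j m_j \mathcal H^n(\Gamma_j) = c(p,M)/2\sigma$; (ii) and (iii) are immediate from the Convergence Theorem, since each $\Gamma_j$ is a smooth minimal hypersurface off a codimension-$7$ singular set (hence $V_p$ is stationary in $M$); (iv) gives the regularity statement for $\reg V_p$.

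The only remaining point is the index bound in (iv). For this I invoke the lower semicontinuity of the Morse index under Allen--Cahn convergence, proved in two-sided cases by Hiesmayr and in full generality by the first author in \cite{Gaspar}, which yields $\operatorname{Ind}(\Gamma) \leq \operatorname{Ind}(u_{\e_k})$ for all sufficiently small $\e_k$, and hence $\operatorname{Ind}(\Gamma) \leq p$ by Theorem \ref{teo:exist}(2). The main subtlety is thus not in the construction itself, which is a direct assembly of the tools recalled above, but in ensuring that the min-max critical points produced by Theorem \ref{teo:exist} truly satisfy the energy and index bounds required to invoke the Convergence Theorem with a uniformly controlled index, which is precisely why the combination of Theorems \ref{teo:exist}, \ref{teo:bounds} and the semicontinuity from \cite{Gaspar} is needed.
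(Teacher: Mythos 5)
Your proposal is correct and takes essentially the same route as the paper, which states Corollary \ref{cor:min} as a direct assembly of Theorems \ref{teo:exist} and \ref{teo:bounds} (to produce solutions $u_{\e_k}$ with bounded $L^\infty$ norm, energy $c_{\e_k}(p,M)<E_{\e_k}(0)$ and index at most $p$), the Convergence Theorem, and the index lower semicontinuity of \cite{Gaspar}. Your handling of the subsequence and of the condition $c_{\e_k}(p,M)<E_{\e_k}(0)$ matches the intended argument, so there is nothing to add.
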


As we mentioned, when $\partial M$ is nonempty the solutions provided by Theorem \ref{teo:exist} are weak solutions for the Neumann problem associated to the Allen-Cahn equation. In view of the varifold convergence it is reasonable to expect that solutions with bounded energy, $L^\infty$-norm and Morse index give rise to free boundary minimal hypersurfaces in $M$. The main issue with this expectation is the lack of a boundary regularity theorem in the spirit of \cite{Wickramasekera} and \cite{TonegawaWickramasekera} (compare with the closed 3-dimensional case in \cite{ChodoshMantoulidis}). Regarding solutions which are local minimizers of the volume-constrained energy functional, a similar statement is proved in \cite{HutchinsonTonegawa}, see Theorem 3.

\subsection{The phase transition spectrum} We will denote hereafter
	\[ \gamma(M) := \inf\left\{ \gamma>0 : p^{-\frac{1}{n+1}}\limsup_{\e \to 0^+} c_{\e}(p,M) \leq \gamma \ \mbox{for all} \ p \right\}.\]
It follows from Theorem \ref{teo:bounds} and the Remarks from last section that $\gamma(M)$ is a positive real number. We define the \emph{upper} and \emph{lower phase transition spectra} of $(M,g)$ as the sequences $\{\overline \ell_p(M)\}_{p \in \N}$ and $\underline \ell_p(M)\}_{p \in \N}$, respectively, given by
	\[\overline \ell_p(M) = \frac{1}{2\sigma}\limsup_{\e \to 0^+} c_\e(p,M)\]
and
	\[\underline \ell_p(M) = \frac{1}{2\sigma}\liminf_{\e \to 0^+} c_\e(p,M).\]
These sequences may be seen as analogues, in the context of phase transition, of the $p$-widths $\omega_p(M)$ -- that is, the \emph{volume spectrum} -- for the area functional, as defined in \cite{Gromov,MarquesNevesInfinite}. We also remark that, by \cite{GasparGuaraco}, the following comparison between phase transition and volume spectra holds
	\[\omega_p(M) \leq \underline\ell_p(M) \leq \overline \ell_p(M), \quad \mbox{for all} \quad p \in \N.\]

\begin{obs}
One can use Corollary \ref{cor:min} to show that if $M^{n+1}$ is a closed manifold of dimension $3 \leq n+1 \leq 7$ then the upper and lower phase transition spectra coincide. In fact if $\underline \ell_p(M) < \overline \ell_p(M)$ for some $p \in \N$ then we can construct, for each $s \in (\underline \ell_p(M), \overline \ell_p(M))$ a sequence $\{\e_k\}$ with $\e_k \downarrow 0$ such that the varifolds associated to a sequence of solutions $\{u_k\}$ with $E_{\e_k}(u_k) = c_{\e_k}(p)$ and index $\leq p$ converge to an integral stationary varifold $V(s)$ with $||V(s)||=s$ and such that $\spt V(s)$ is a smooth and embedded minimal hypersurface of index $\leq p$. In particular, the number of minimal hypersufaces with area $\leq \overline \ell_p(M)$ and index $\leq p$ cannot be finite. By the Compactness theorem of B. Sharp \cite{Sharp} and the Bumpy Metrics Theorem of B. White \cite{WhiteBumpy} we see that this can only happen for a meagre set of metrics in $M$. On the other hand the limit spectrum values $\overline \ell_p(M)$ and $\underline \ell_p(M)$ depend continuously on the metric in $M$, see Lemma \ref{lem:cont} below. Hence $\overline\ell_p(M)=\underline\ell_p(M)$ for all $p$ and all metrics on $M$.
\end{obs}

\begin{definition}From now on we will denote by $\{\ell_p(M)\}$ the sequence $\{\underline\ell_p(M)\}$,  which coincide with  $\{\bar\ell_p(M)\}$ for $3\leq n+1\leq 7$, in view of the Remark above. \end{definition}

We state next the Weyl Law for the phase transition spectrum, which is our main result.

\begin{teo} \label{thm:weyl}
There exists a universal $\tau(n)>0$, such that
\[\lim_{p \to +\infty} p^{-\frac{1}{n+1}} \ell_p(M) = \tau(n)\vol(M,g)^{\frac{n}{n+1}}.\]
\end{teo}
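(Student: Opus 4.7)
The plan is to follow the blueprint of Liokumovich-Marques-Neves for the volume spectrum, adapted to the Allen-Cahn setting. First I would reduce the statement on a general manifold to a Weyl-type law on Euclidean cubes: the sequence $p^{-1/(n+1)}\ell_p(Q)$ for a unit cube $Q=[0,1]^{n+1}$ with flat metric (Neumann boundary conditions) should converge to a constant, which will be our $\tau(n)$. Once the Euclidean Weyl law is known, a global Riemannian version follows by approximating $(M,g)$ by a fine disjoint union of small, almost-flat geodesic balls via a uniform $C^0$-estimate on $g$.

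For the Euclidean step, the key algebraic ingredients are: (i) a scaling identity $\ell_p(\lambda Q)=\lambda^{n}\ell_p(Q)$ coming from the change of variables $u_\e(x)\mapsto u_{\lambda \e}(\lambda^{-1}x)$ under which the $\e$-energy $E_\e$ transforms homogeneously; and (ii) a subadditivity estimate: if $Q$ is decomposed into $N=k^{n+1}$ congruent subcubes $Q_i$ and we pick $p$-parameter Allen-Cahn sweepouts $A_i\in\mathcal{F}_p(Q_i)$, then the set $A=\{\sum_i u_i\cdot\chi_{Q_i}:u_i\in A_i\}$ (suitably regularized across interfaces using the $\pm 1$ plateaus of $W$) belongs to $\mathcal{F}_{Np+N-1}(Q)$ by the subadditivity (I4) of the cohomological index, and its supremum energy is bounded by $\max_i \sup_{A_i}E_\e(u_i)$ up to boundary error. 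Combining this with the universal bound from Theorem \ref{teo:bounds} and a Fekete-type argument (as in Section 8 of Liokumovich-Marques-Neves), one gets convergence of $p^{-1/(n+1)}\overline{\ell_p}(Q)$, and matching lower bounds via a Lusternik-Schnirelmann argument that forces a $p$-sweepout on $Q$ to split cohomologically into $p_i$-sweepouts on each $Q_i$ with $\sum p_i\leq p$.

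For the passage to $(M,g)$ I would use a partition of $M$ into small regions $\{U_j\}_{j=1}^L$, each bi-Lipschitz close to a Euclidean cube of volume $v_j$ with $\sum_j v_j$ close to $\vol(M,g)$. The upper bound on $\ell_p(M)$ comes from plugging in the Euclidean Weyl law on each $U_j$ and glueing test families via the subadditivity construction above; optimizing the allocation $(p_j)$ of sweepout dimensions subject to $\sum_j p_j\leq p-L$ gives, via H\"older, the sum $\sum_j \tau(n)v_j^{n/(n+1)}p_j^{1/(n+1)}\leq \tau(n)\vol(M,g)^{n/(n+1)}p^{1/(n+1)}(1+o(1))$. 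The matching lower bound follows from a Lusternik-Schnirelmann/index-splitting argument applied to any near-optimal $p$-sweepout on $M$: localizing its Allen-Cahn energy on each $U_j$ using cutoffs that exploit the Neumann structure and the fact that $W(\pm 1)=0$, one obtains $p_j$-parameter families on each $U_j$ with $\sum p_j\geq p-L$, and the Euclidean lower bound on each $U_j$ combined with the same H\"older inequality closes the estimate. The continuity of the spectrum under metric perturbations (Lemma \ref{lem:cont} cited in the remark before the Definition) lets one pass from the cube model to the actual Riemannian pieces with negligible error.

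The principal obstacle is the subadditivity/superadditivity step for the Allen-Cahn spectrum over a disjoint decomposition. Unlike the volume spectrum, where families of cycles localize geometrically and one can simply truncate supports, Allen-Cahn configurations $u\in H^1$ are globally defined functions and cutting them off introduces transition layers of positive energy. The trick will be to gluing using the $\pm 1$ plateaus of the double-well: one can modify a family on a thin collar $\{d(\cdot,\partial U_j)<\delta\}$ so that $u\equiv+1$ (or $-1$) there, at the cost of an interface energy of order $2\sigma\mathcal{H}^{n}(\partial U_j)$, which is $O(L^{1/(n+1)})$ after the partition, a lower-order term compared to $p^{1/(n+1)}$ once $L$ is fixed and $p\to\infty$. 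Carefully controlling that this gluing preserves $\Z/2\Z$-symmetry and the cohomological index, so that the constructed family still lies in $\mathcal{F}_p$, together with the Ghoussoub-style deformation argument used in Theorem \ref{teo:exist}, is where most of the technical work will concentrate.
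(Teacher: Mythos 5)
Your outline matches the Liokumovich--Marques--Neves blueprint that the paper follows (reduce to the cube, scaling, Lusternik--Schnirelmann for superadditivity, decompose $M$), but the mechanism you propose for every upper bound --- gluing sweepouts of the pieces into a sweepout of the big region --- has a genuine gap, in fact three. First, for a piecewise combination $u=\sum_i u_i\chi_{Q_i}$ with disjoint supports the energy is the \emph{sum} $\sum_i E_\e(u_i,Q_i)$, not the maximum, so your claimed bound $\sup_A E_\e\leq\max_i\sup_{A_i}E_\e$ is false (for the Weyl law one needs the sum anyway, but then the constant bookkeeping must be redone). Second, and more seriously, the index claim fails: the family $\{\sum_i u_i\chi_{Q_i}: u_i\in A_i\}$ is a product with the diagonal $\Z/2\Z$-action, and projecting equivariantly onto one factor together with monotonicity (I2) shows its index is controlled by $\max_i\ind(A_i)$, not by $\sum_i\ind(A_i)$; property (I4) is an upper bound for the index of \emph{unions} (it is exactly what drives the Lusternik--Schnirelmann inequality) and gives no lower bound for products. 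The join-type repair $\sum_i t_iu_i$ with $(t_i)$ on a sphere recovers additivity of the index but destroys the energy estimate, since $E_\e(t_iu_i)$ is not controlled by $E_\e(u_i)$ (the potential term increases as $t_i\to0$). Third, gluing ``using the $\pm1$ plateaus'' is not $\Z/2\Z$-equivariant: forcing $u\equiv+1$ on a collar is not an odd operation, so the cohomological index of the modified family is no longer under control; likewise, the cutoffs you invoke in the manifold lower bound would cost $\int W(\chi u)/\e\sim\delta/\e\to\infty$ over a fixed-width collar. For the lower bound no surgery is needed at all: the paper's Lusternik--Schnirelmann lemma only \emph{restricts} functions to the pieces and applies (I4) to the sublevel sets $A_i=\{u\in A: E_\e(u,\Omega_i^*)\leq c_\e(p_i,\Omega_i^*)-\delta/N\}$, since the energy is an integral and localizes for free.

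The paper circumvents the gluing problem entirely, and repairing your proposal essentially means adopting its two devices. For Euclidean domains the upper bound for $\Omega$ is \emph{also} obtained from superadditivity: one applies the Lusternik--Schnirelmann inequality to the unit cube filled by scaled copies of $\Omega$ (Lemma \ref{lema:tess}), so $\tilde\ell_p(C)$ dominates the $\Omega$-quantities and no sweepout is ever assembled from pieces. For the manifold upper bound, one builds a single connected Euclidean domain $\Omega=\bigcup_i C_i\cup\bigcup_i T_i$ out of bi-Lipschitz images $C_i$ of the pieces of $M$ joined by thin tubes $T_i$, takes a whole sweepout $A\in\mathcal{F}_p(\Omega)$ (whose asymptotics are known from Section \ref{sec:weyl}), and transfers each $u\in A$ to $M$ by the odd, continuous truncation $u\mapsto\max\{-v_{\delta,\e},\min\{u,v_{\delta,\e}\}\}$, where $v_{\delta,\e}=\psi_\e(d)$ is built from the heteroclinic profile of the distance to the cuts; the truncated functions vanish (at the value $0$, not $\pm1$) on $\partial C_i$, so they can be transplanted piece by piece to $M$, the index is preserved by (I2) because the transfer map is odd and continuous, and the energy cost is $2\sigma(1+\eta)\beta(\Omega)+W(\psi(\delta/\e))|\Omega|/\e$, which is independent of $p$ and $o(1)$ as $\e\to0$. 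This single-transfer argument is what replaces your gluing step and is where the technical work of Section \ref{sec:weylman} actually lies.
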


\section{Weyl law for Euclidean domains}\label{sec:weyl}
In this section we prove the Weyl law for the phase transition spectrum $\{\ell_p(\Omega)\}$ for Euclidean bounded domains $\Omega \subset \R^{n+1}$ with piecewise smooth boundary, for $3\leq n+1\leq 7$, following the strategy of Liokumovich-Marques-Neves \cite{LioMarNev}. We denote by $C$ the unit cube in $\R^{n+1}$, and we say that two regions $\Omega_1,\Omega_2 \subset \R^{n+1}$ are \emph{similar} if they differ by isometries and scaling. In this case, if $\Omega_2=T(\Omega_1)$ where $T$ is a composition of such maps with scaling factor $\lambda>0$, then for all $u:\Omega_2 \to \R$ and $\e>0$ it holds
	\begin{align}
		E_\e(u \circ T) &= \int_{\Omega_1} \left( \frac{\e|\nabla (u \circ T)|^2}{2} + \frac{W(u\circ T)}{\e}\right)\,d\mathcal{L}^{n+1} \nonumber\\
		& = \int_{\Omega_2} \left( \frac{\e\,\lambda^2 |\nabla u|^2}{2} + \frac{W(u)}{\e}\right)\lambda^{-(n+1)}\,d\mathcal{L}^{n+1}\\
		& = \lambda^{-n} E_{\lambda \e}(u). \nonumber
	\end{align}
Consequently we have
	\begin{equation} \label{eq:scal}
		c_{\lambda\e}(p, \lambda\Omega_1) = \lambda^{n} c_{\e}(p,\Omega_1)
	\end{equation}
for all $p \in \N$. In particular the energy spectrum scales as the $n$-th power of the scaling factor. The following lemma generalizes this transformation property and it will be useful throughout the article.

\begin{lemma} \label{lema:var}
Let $F:(\Omega_1,g_1) \to (\Omega_2,g_2)$ be a diffeomorphism between compact $(n+1)$-manifolds with piecewise $C^1$ boundary. For all $\e>0$ and $u \in H^1(\Omega_2)$ it holds
	\[E_{\e/||DF||_{\infty}}(u \circ F, \Omega_1) \leq ||DF||_{\infty}||DF^{-1}||_{\infty}^{n+1} E_\e(u,\Omega_2).\]
\end{lemma}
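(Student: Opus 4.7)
The proof is essentially a change of variables combined with the chain rule, where the choice $\delta := \e/\|DF\|_\infty$ for the parameter on $\Omega_1$ is precisely what balances the two terms of the energy.

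First I would treat the gradient term. Writing $v := u \circ F$, the chain rule gives $\nabla_{g_1} v(x) = DF(x)^\ast (\nabla_{g_2} u)(F(x))$, where the adjoint is taken with respect to the two metrics. Since the operator norm of the adjoint equals that of the map itself, one obtains the pointwise bound
\[|\nabla_{g_1} v(x)|_{g_1}^2 \leq \|DF\|_\infty^2\, |\nabla_{g_2} u(F(x))|_{g_2}^2.\]
Next I would apply the change of variables $y = F(x)$, noting that $F^{-1}_\ast d\mathrm{vol}_{g_1} = J_{F^{-1}}\, d\mathrm{vol}_{g_2}$ with the Jacobian $J_{F^{-1}}(y) \leq \|DF^{-1}\|_\infty^{n+1}$ (the determinant is bounded by the $(n{+}1)$-th power of the operator norm).

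With $\delta = \e/\|DF\|_\infty$, the gradient contribution transforms as
\[\int_{\Omega_1} \frac{\delta |\nabla_{g_1} v|^2}{2}\,d\mathrm{vol}_{g_1} \leq \|DF\|_\infty\,\|DF^{-1}\|_\infty^{n+1} \int_{\Omega_2} \frac{\e |\nabla_{g_2} u|^2}{2}\,d\mathrm{vol}_{g_2},\]
since the extra factor $\|DF\|_\infty^2$ from the chain rule combines with $\delta = \e/\|DF\|_\infty$ to produce the single factor $\|DF\|_\infty$, which then absorbs the Jacobian bound. An identical change of variables handles the potential term, yielding
\[\int_{\Omega_1} \frac{W(v)}{\delta}\,d\mathrm{vol}_{g_1} \leq \|DF\|_\infty\,\|DF^{-1}\|_\infty^{n+1} \int_{\Omega_2} \frac{W(u)}{\e}\,d\mathrm{vol}_{g_2},\]
where once again the ratio $\e/\delta = \|DF\|_\infty$ accounts for one factor of $\|DF\|_\infty$ out front. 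Summing the two inequalities produces the desired estimate.

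There is no real obstacle here; the only subtle point is making sure the metric-dependent Jacobian $J_{F^{-1}}$ and the metric-dependent norm of $DF^{-1}$ are consistently interpreted, so that the pointwise bound $J_{F^{-1}} \leq \|DF^{-1}\|_\infty^{n+1}$ (which is the standard inequality between determinant and operator norm on $(n{+}1)$-dimensional inner product spaces) can be invoked. The choice of parameter $\delta = \e/\|DF\|_\infty$ is dictated by the requirement that the gradient factor $\delta \|DF\|_\infty^2 = \e \|DF\|_\infty$ match the potential factor $\e/\delta \cdot \e/\e = \e \|DF\|_\infty$, which is why the same prefactor $\|DF\|_\infty \|DF^{-1}\|_\infty^{n+1}$ appears in both estimates.
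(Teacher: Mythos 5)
Your argument is correct and follows essentially the same route as the paper: the pointwise chain-rule bound $|\nabla_{g_1}(u\circ F)|_{g_1}^2 \leq \|DF\|_\infty^2\,|\nabla_{g_2}u|_{g_2}^2\circ F$, the Hadamard-type bound $|J_{F^{-1}}|\leq \|DF^{-1}\|_\infty^{n+1}$ for the volume Jacobian, and the change of variables, with the parameter $\e/\|DF\|_\infty$ chosen exactly so that both the gradient and potential terms carry the single prefactor $\|DF\|_\infty\|DF^{-1}\|_\infty^{n+1}$. (The only blemish is the garbled bookkeeping ``$\e/\delta\cdot\e/\e=\e\|DF\|_\infty$'' in your closing remark, which should just read $\e/\delta=\|DF\|_\infty$; your displayed estimates are the correct ones.)
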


\begin{proof}
Let $\lambda=||DF||_{\infty}$. Since
	\[g_1(\nabla^{g_1} (u\circ F),v) = d(u\circ F)(v) = g_2(\nabla^{g_2} u, dF(v))\]
we obtain
	\[|\nabla^{g_1}(u \circ F)|^2_{g_1} \leq \lambda^2|\nabla^{g_2} u|^2_{g_2} \circ F.\]
Moreover by Hadamard's inequality we can bound the norm of the Jacobian determinant of $F^{-1}$ from above by $||DF^{-1}||_{\infty}^{n+1}$. Thus using the change of variables formula we obtain
	\begin{align*}
		E_{\e/\lambda}(u\circ F,\Omega_1) & = \int_{\Omega_1} \left( \frac{\e|\nabla^{g_1}(u\circ F)|_{g_1}^2}{2\lambda} + \frac{\lambda W(u\circ F)}{\e} \right)\, d\mathcal{H}^{n+1} \\
		& \leq \lambda \int_{\Omega_1} \left(\frac{\e(|\nabla^{g_2} u|_{g_2}^2 \circ F)}{2} + \frac{ W(u \circ F)}{\e}\right)|JF^{-1}||JF|\,d\mathcal{H}^{n+1}\\
		&\leq \lambda||DF^{-1}||^{n+1} \int_{\Omega_2} \left(\frac{\e|\nabla^{g_2} u|_{g_2}^2}{2} + \frac{W(u)}{\e} \right) \, d\mathcal{H}^{n+1} \qedhere
	\end{align*}
\end{proof}

The next Lemma is the phase transition version of the important Lusternik-Schnirelmann inequality of \cite{LioMarNev}.
	
\begin{lemma}[Lusternik-Schnirelman Inequality]
Consider domains $\Omega,\{\Omega_i\}_{i=1}^N$ and $\{\Omega_i^*\}_{i=1}^N$ in $\R^{n+1}$ with piecewise smooth boundaries such that
	\begin{itemize}
		\item $|\Omega|=|\Omega_i|=1$ for $i=1,\ldots, N$
		\item $\Omega_i^*$ is similar to $\Omega_i$ for $i=1,\ldots, N$
		\item $\{\Omega_i^*\}$ are pairwise disjoint subsets of $\Omega$.
	\end{itemize}
Then
	\[p^{-\frac{1}{n+1}}\ell_p(\Omega) \geq \sum_{i=1}^N |\Omega_i^*| p_i^{-\frac{1}{n+1}}\ell_{p_i}(\Omega_i) - \frac{c}{pV},\]
where $p_i=\lfloor p|\Omega_i^*| \rfloor$, $V = \min_i\{|\Omega_i^*|\}$ and $c=\max_i \gamma(\Omega_i)$.
\end{lemma}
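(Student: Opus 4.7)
The plan is to adapt the Lusternik--Schnirelmann argument of Liokumovich--Marques--Neves, replacing cycles with admissible symmetric families in $H^{1}(\Omega)$ and mass with the Allen--Cahn energy, while using the cohomological index in place of topological dimension. First I would fix $\e>0$ and small parameters $\delta,\eta>0$, and pick $A \in \mathcal{F}_p(\Omega)$ with $\sup_A E_\e \leq c_\e(p,\Omega)+\delta$. For each $i$, let $T_i:\Omega_i \to \Omega_i^*$ denote the similarity with scaling factor $\lambda_i = |\Omega_i^*|^{1/(n+1)}$, and introduce the continuous, odd linear map $\Psi_i: H^{1}(\Omega) \to H^{1}(\Omega_i)$, $\Psi_i(u) = (u|_{\Omega_i^*})\circ T_i$. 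The transformation formula \eqref{eq:scal} yields $E_{\e/\lambda_i}(\Psi_i(u),\Omega_i) = \lambda_i^{-n} E_\e(u,\Omega_i^*)$.

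The key step is a partition claim: setting
\[A_i = \{u \in A : E_\e(u,\Omega_i^*) \leq c_\e(p_i,\Omega_i^*) - \eta\},\]
I would show that $\ind(A_i) \leq p_i$ for each $i$. Each $A_i$ is compact and symmetric; assuming $\ind(A_i) \geq p_i + 1$, monotonicity (I2) of the index forces $\ind(\Psi_i(A_i)) \geq p_i+1$. For $\e$ sufficiently small, one has $0 \notin \Psi_i(A_i)$, since $u|_{\Omega_i^*} \equiv 0$ would imply $E_\e(u,\Omega_i^*) = W(0)|\Omega_i^*|/\e$, which overwhelms $c_\e(p_i,\Omega_i^*) = O(p_i^{1/(n+1)})$ by Theorem \ref{teo:bounds}. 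Thus $\Psi_i(A_i) \in \mathcal{F}_{p_i}(\Omega_i)$, and applying the definition of $c_{\e/\lambda_i}(p_i,\Omega_i)$ together with the scaling identity yields
\[c_{\e/\lambda_i}(p_i,\Omega_i) \leq \sup_{v \in \Psi_i(A_i)} E_{\e/\lambda_i}(v,\Omega_i) \leq \lambda_i^{-n}\bigl(c_\e(p_i,\Omega_i^*)-\eta\bigr) = c_{\e/\lambda_i}(p_i,\Omega_i) - \lambda_i^{-n}\eta,\]
a contradiction.

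If one then had $A = \bigcup_i A_i$, subadditivity (I4), combined with $p_i \leq p|\Omega_i^*|$ and $\sum_i |\Omega_i^*| \leq |\Omega| = 1$ (disjointness), would produce
\[p+1 \leq \ind(A) \leq \sum_{i=1}^N \ind(A_i) \leq \sum_{i=1}^N p_i \leq p,\]
absurd. So some $u^* \in A$ avoids every $A_i$, giving $E_\e(u^*,\Omega_i^*) > c_\e(p_i,\Omega_i^*) - \eta$ for all $i$; summing and using pairwise disjointness and positivity of the integrand yields $c_\e(p,\Omega)+\delta \geq \sum_i c_\e(p_i,\Omega_i^*) - N\eta$. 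Letting $\delta,\eta \downarrow 0^+$, taking $\liminf_{\e \to 0^+}$, and using $c_\e(p_i,\Omega_i^*) = \lambda_i^n c_{\e/\lambda_i}(p_i,\Omega_i)$ together with superadditivity of $\liminf$ over finite sums, I arrive at
\[\ell_p(\Omega) \geq \sum_{i=1}^N |\Omega_i^*|^{n/(n+1)} \ell_{p_i}(\Omega_i).\]

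The final step is bookkeeping. A first-order Taylor expansion in $1/(p|\Omega_i^*|)$, valid for $p$ large enough that $pV \geq 2$, gives
\[|\Omega_i^*|p_i^{-1/(n+1)} - p^{-1/(n+1)}|\Omega_i^*|^{n/(n+1)} \leq \frac{p^{-1/(n+1)}|\Omega_i^*|^{n/(n+1)}}{(n+1)p_i}.\]
Substituting $\ell_{p_i}(\Omega_i) \leq \gamma(\Omega_i)p_i^{1/(n+1)}/(2\sigma) \leq \max_i\gamma(\Omega_i)(p|\Omega_i^*|)^{1/(n+1)}/(2\sigma)$ and using $p_i \geq p|\Omega_i^*|/2$ along with $N \leq 1/V$ (since $NV \leq \sum_i|\Omega_i^*| \leq 1$), the total error in comparing $p^{-1/(n+1)}\sum_i|\Omega_i^*|^{n/(n+1)}\ell_{p_i}(\Omega_i)$ with $\sum_i|\Omega_i^*|p_i^{-1/(n+1)}\ell_{p_i}(\Omega_i)$ is controlled by a constant multiple of $\max_i\gamma(\Omega_i)/(pV)$, yielding the claimed inequality. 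The most delicate point is ensuring $\Psi_i(A_i) \subset H^{1}(\Omega_i)\setminus\{0\}$ so that the index-monotonicity step is legitimate; this is secured by the $\e$-dependent estimate above, and is automatic once one passes to the limit $\e \to 0^+$ that defines $\ell_p$.
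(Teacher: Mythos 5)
Your argument is correct and follows essentially the same route as the paper: the sublevel sets $A_i$ tied to the localized energies, the index bound $\ind(A_i)\leq p_i$ via the (rescaled) restriction maps and the definition of $c_\e(p_i,\Omega_i^*)$, subadditivity of the $\Z/2\Z$ index to produce $u^*\notin\bigcup_i A_i$, and the scaling identity \eqref{eq:scal} with the estimate $p_i\geq p|\Omega_i^*|-1$ for the bookkeeping. The only differences are presentational (you spell out the step the paper dismisses ``by definition,'' and your final error constant $2^{n/(n+1)}/(n+1)\leq 1$ should be checked explicitly rather than invoking ``a constant multiple,'' but it does give the stated bound).
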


\begin{proof}
Denote
	\[\bar p := \sum_{i=1}^N p_i \leq \sum_{i=1}^N p|\Omega_i^*| \leq p|\Omega|=p.\]
Given $\e>0$, $A \in \mathcal{F}_p(\Omega)$ and $\delta>0$ for each $i=1,\ldots, N$ consider
	\[ A_i = \{u \in A : E_\e(u,\Omega_i^*) \leq c_\e(p_i, \Omega_i^*) -\delta/N\}. \]
By definition we have $\ind(A_i) \leq p_i$. Hence by the subadditivity of $\ind$ we see that there exists $u \in A \setminus \cup_{i=1}^N A_i$ and
	\[\sup_A E_\e \geq E_\e(u, \Omega) \geq \sum_{i=1}^N E_\e(u, \Omega_i^*) > \sum_{i=1}^N c_\e(p_i,\Omega_i^*) - \delta.\]
Therefore
	\begin{equation}
		c_\e(p,\Omega) \geq \sum_{i=1}^N c_\e(p_i,\Omega_i^*).
	\end{equation}
Since $\Omega_i$ is isometric to $|\Omega_i^*|^{-1/(n+1)}\Omega_i^*$ and
	\[1 \geq \frac{p_i}{p|\Omega_i^*|} \geq 1- \frac{1}{p|\Omega_i|} \geq \left( 1-\frac{1}{p|\Omega_i|} \right)^{n+1} ,\]
we conclude that
	\begin{align*}
		p^{-\frac{1}{n+1}}c_\e(p,\Omega) & \geq p^{-\frac{1}{n+1}} \sum_{i=1}^N c_\e(p_i,\Omega_i^*)\\
		& = \sum_{i=1}^N |\Omega_i^*| \left( \frac{p_i}{p|\Omega_i^*|} \right)^{\frac{1}{n+1}} p_i^{-\frac{1}{n+1}} c_{\e/|\Omega_i^*|}(p_i,\Omega_i)\\
		& \geq \sum_{i=1}^N |\Omega_i^*| \left( 1 - \frac{1}{p|\Omega_i|} \right) p_i^{-\frac{1}{n+1}} c_{\e/|\Omega_i^*|}(p_i,\Omega_i).
	\end{align*}
By making $\e \downarrow 0$ we get
	\begin{align*}
		p^{-\frac{1}{n+1}}\ell_p(\Omega) &\geq \sum_{i=1}^N |\Omega_i^*| p_i^{-\frac{1}{n+1}} \ell_{p_i}(\Omega_i) - \frac{1}{p} \sum_{i=1}^N\frac{|\Omega_i^*|}{\min_j |\Omega_j^*|} p_i^{-\frac{1}{n+1}} \ell_{p_i}(\Omega_i)\\
		& \geq \sum_{i=1}^N|\Omega_i^*|p_i^{-\frac{1}{n+1}}\ell_{p_i}(\Omega_i) - \frac{\max_j \gamma(\Omega_j)|\Omega|}{pV}.\qedhere
	\end{align*}
\end{proof}

\begin{teo} \label{thm:domain}
There is a positive constant $\tau(n,W)>0$ such that for all Lipschitz domains $\Omega \subset \R^{n+1}$ with piecewise smooth boundary it holds
	\[\lim_{p \to +\infty} p^{-\frac{1}{n+1}} \ell_p(\Omega) =\tau(n,W)|\Omega|^{\frac{n}{n+1}}.\]
\end{teo}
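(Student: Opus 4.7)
The plan is to adapt the Liokumovich-Marques-Neves strategy to the phase transition setting, splitting the proof into three parts: establishing the limit for the unit cube, obtaining a lower bound for general $\Omega$ via subcube packing, and producing a matching upper bound via cube covering.

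\emph{Part 1 (Limit on the unit cube).} Let $C = [0,1]^{n+1}$ and set $g(p) := p^{-1/(n+1)}\ell_p(C)$. By Theorem \ref{teo:bounds}, $g$ is bounded, so $\tau_- := \liminf g$ and $\tau_+ := \limsup g$ are finite. Subdividing $C$ into $N^{n+1}$ congruent subcubes (each similar to $C$ with scale $1/N$) and applying the Lusternik-Schnirelman inequality yields
\[
g(p) \;\geq\; g\bigl(\lfloor p/N^{n+1}\rfloor\bigr) \;-\; \frac{\gamma(C)\,N^{n+1}}{p}.
\]
Fix $\varepsilon>0$ and pick $q_0$ large with $g(q_0) > \tau_+ - \varepsilon$ and $\gamma(C)/q_0 < \varepsilon$. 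An elementary counting argument shows that for every sufficiently large $p$ there exists an integer $N$ with $\lfloor p/N^{n+1}\rfloor = q_0$ (the interval $(p/(q_0+1),\,p/q_0]$ has length $p/(q_0(q_0+1))$, which eventually exceeds the spacing $\sim (n+1)N^n$ between consecutive $(n+1)$-th powers). Then $N^{n+1}/p \leq 1/q_0$ and the inequality gives $g(p) > \tau_+ - 2\varepsilon$. Hence $\tau_- \geq \tau_+$, so $\tau_- = \tau_+ =: \tau(n,W)$ exists.

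\emph{Part 2 (Lower bound).} Using Lemma \ref{lema:var} and the scaling $\ell_p(\lambda\Omega) = \lambda^n\ell_p(\Omega)$, rescale so that $|\Omega| = 1$. Given $\delta>0$, pack $\Omega$ with $N_\delta$ disjoint congruent subcubes $C_i^* \subset \Omega$ of common volume $V$ satisfying $N_\delta V \geq 1-\delta$, possible by the piecewise-smooth regularity of $\partial\Omega$. Applying the Lusternik-Schnirelman inequality and Part 1 gives
\[
\liminf_{p\to\infty} p^{-1/(n+1)}\ell_p(\Omega) \;\geq\; (1-\delta)\,\tau(n,W).
\]
Letting $\delta\to 0$ and reinstating the volume factor yields $\liminf_p p^{-1/(n+1)}\ell_p(\Omega) \geq |\Omega|^{n/(n+1)}\tau(n,W)$.

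\emph{Part 3 (Upper bound).} This is the main obstacle. First, monotonicity: if $\Omega \subset \Omega'$, the restriction $H^1(\Omega')\to H^1(\Omega)$ is odd and continuous, so property (I2) of $\ind$ gives $\ell_p(\Omega) \leq \ell_p(\Omega')$. Approximating $\Omega$ from outside by a finite disjoint union of cubes $\Omega' = \bigsqcup_j Q_j$ with $|\Omega'| \leq |\Omega|+\delta$ reduces the problem to disjoint cube unions. For such $\Omega'$, take near-optimal families $A_j \in \mathcal{F}_{p_j}(Q_j)$ with $p_j$ proportional to $|Q_j|$ and combine them into $\tilde A \in \mathcal{F}_P(\Omega')$ with $P \approx \sum(p_j+1)$ via an equivariant join-type construction (the Fadell-Rabinowitz index is additive under joins, $\ind(A*B)=\ind(A)+\ind(B)$). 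The delicate technical point is the energy bound: naively extending functions by $0$ introduces divergent $W(0)/\varepsilon$ terms, while extending by the well minima $\pm 1$ breaks odd symmetry. The resolution is an equivariant interpolation through nearly-constant symmetric profiles (a Guth-type bending construction) that yields $\sup_{\tilde A} E_\varepsilon \leq \sum_j c_\varepsilon(p_j,Q_j) + o_\varepsilon(1)$. Combined with the scaling $\ell_{p_j}(Q_j) = |Q_j|^{n/(n+1)}\ell_{p_j}(C)$ and $g(p_j) \to \tau(n,W)$, this produces $\limsup_p p^{-1/(n+1)}\ell_p(\Omega') \leq |\Omega'|^{n/(n+1)}\tau(n,W)$; sending $\delta \to 0$ via monotonicity completes the proof.
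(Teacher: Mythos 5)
Your Parts 1 and 2 are correct and are essentially the paper's own argument: the existence of $\lim_p \tilde\ell_p(C)$ is obtained from the Lusternik--Schnirelman inequality applied to a maximal packing of $C$ by small congruent cubes, and the lower bound for general $\Omega$ from a cube packing of $\Omega$; your counting trick of choosing $N$ with $\lfloor p/N^{n+1}\rfloor=q_0$ is a harmless variant of the paper's choice of tile volume $\delta_j=p_k/q_j$. The monotonicity observation $\ell_p(\Omega)\leq\ell_p(\Omega')$ for $\Omega\subset\Omega'$ via the odd continuous restriction map and (I2) is also correct, though by itself it cannot give the sharp constant.

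The genuine gap is in Part 3, at the step where you "combine" near-optimal families $A_j\in\mathcal{F}_{p_j}(Q_j)$ into $\tilde A\in\mathcal{F}_P(\Omega')$ with $\sup_{\tilde A}E_\e\leq\sum_j c_\e(p_j,Q_j)+o_\e(1)$. This is precisely the hard point and it is only asserted, not constructed. Abstract additivity of the Fadell--Rabinowitz index under joins does not help until you exhibit an equivariant map of the join into $H^1(\Omega')$ that does not kill cohomology, and any concrete realization runs into the energy problem you yourself flag: assigning to join coordinates functions like $t_ju_j$ makes the potential term $\int_{Q_j}W(t_ju_j)/\e$ of order $W(0)|Q_j|/\e$ as $t_j\to 0$ (since $W(0)>0$), extension by $\pm1$ is not odd, and extension by $0$ again costs $W(0)/\e$ per unit volume; the invoked "equivariant interpolation through nearly-constant symmetric profiles" is exactly what would need to be built, and even the paper's own gluing (Section \ref{sec:weylman}), which only transplants a sweepout of a single connected domain after truncation by $\pm v_{\delta,\e}$, pays a cost $2\sigma(1+\eta)\beta(\Omega)$ proportional to the area of the interfaces created, not $o_\e(1)$. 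The paper avoids this entire issue for Euclidean domains: it never builds sweepouts of $\Omega$ (or of cube unions) out of sweepouts of cubes. Instead, for the upper bound it uses Lemma \ref{lema:tess} to fill the unit cube $C$ with disjoint regions similar to $\Omega$ up to volume $1-\delta$, tiles $C$ by small cubes of volume $\delta_p=q_k/(p|\Omega_1^*|)$ along a sequence $q_k$ realizing $\limsup_p\tilde\ell_p(\Omega)=\beta$, and applies the same Lusternik--Schnirelman inequality to $C$; combined with the lower bound of your Part 2 this gives $\tau(n,W)\geq|\Omega_1^*|\beta+\tau(n,W)(1-\delta-|\Omega_1^*|)$, hence $\tau(n,W)\geq\beta$, with no gluing or join construction at all. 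To complete your proof you should either replace Part 3 by this reversed Lusternik--Schnirelman argument or supply the missing equivariant gluing construction with a quantitative energy estimate (at best an error proportional to the total boundary area of the cubes, which would still suffice after dividing by $p^{\frac{1}{n+1}}$, but which is not provided).
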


It follows from \eqref{eq:scal} that we may assume, without loss of generality, that $|\Omega|=1$. As in \cite{LioMarNev}, we write $\tilde \ell_p = p^{-1/(n+1)}\ell_p$. We first prove the following

\begin{lemma}
$\displaystyle \liminf_{p } \tilde \ell_p(C) = \limsup_{p} \tilde\ell_p(C)$
\end{lemma}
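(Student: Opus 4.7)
The plan is to use the Lusternik--Schnirelmann inequality just proved, applied to the unit cube itself tiled by smaller congruent subcubes. First I would take $\Omega=C$ and let $\Omega_i^*$ be the $N^{n+1}$ subcubes of side $1/N$ that tile $C$, each similar to $\Omega_i=C$. All the volumes $|\Omega_i^*|$ equal $N^{-(n+1)}$, all the $p_i$'s equal $q:=\lfloor p/N^{n+1}\rfloor$, and $c=\gamma(C)$, so the inequality collapses to
\[
\tilde\ell_p(C)\;\geq\;\tilde\ell_q(C)-\frac{\gamma(C)\,N^{n+1}}{p}\;\geq\;\tilde\ell_q(C)-\frac{\gamma(C)}{q},
\]
valid for every $p\in\N$ and every $N\geq 1$, where the last step uses $p\geq qN^{n+1}$.

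Next I would use an elementary counting argument to show that, for any fixed $Q\in\N$ and all sufficiently large $p$, there is an integer $N$ with $\lfloor p/N^{n+1}\rfloor=Q$. Since the map $N\mapsto\lfloor p/N^{n+1}\rfloor$ is non-increasing in $N$, the value $Q$ is achieved precisely when the interval $((p/(Q+1))^{1/(n+1)},(p/Q)^{1/(n+1)}]$ contains an integer. A mean-value estimate shows this interval has length comparable to $p^{1/(n+1)}/Q^{(n+2)/(n+1)}$, which exceeds $1$ as soon as $p\geq C_n Q^{n+2}$ for some dimensional constant $C_n$. Combining this with the first step yields
\[
\tilde\ell_p(C)\;\geq\;\tilde\ell_Q(C)-\frac{\gamma(C)}{Q}\qquad\text{for all } p\geq C_n Q^{n+2}.
\]

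Now I would take $\liminf_{p\to\infty}$ of both sides, which for each fixed $Q$ gives $\liminf_p\tilde\ell_p(C)\geq \tilde\ell_Q(C)-\gamma(C)/Q$. To finish, I would pick a sequence $Q_j\to\infty$ with $\tilde\ell_{Q_j}(C)\to \limsup_p\tilde\ell_p(C)$ and let $j\to\infty$; this forces $\liminf_p\tilde\ell_p(C)\geq \limsup_p\tilde\ell_p(C)$, hence equality. All the real content lies in the Lusternik--Schnirelmann inequality; the only delicate point is the counting step ensuring that $N\mapsto \lfloor p/N^{n+1}\rfloor$ hits every prescribed value $Q$ once $p$ is large enough, which is a quantitative but routine arithmetic check.
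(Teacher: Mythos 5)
Your proof is correct, and it relies on the same key tool as the paper, namely the Lusternik--Schnirelmann inequality applied to small cubes inside $C$, but the implementation is genuinely different. The paper does not tile: for a limsup-realizing sequence $p_k$ and a liminf-realizing sequence $q_j$ it packs $C$ with the maximal number $N_j$ of cubes of the tailor-made volume $\delta_j=p_k/q_j$, so that $\lfloor q_j\delta_j\rfloor=p_k$ on the nose; the price is a volume defect $N_j\delta_j<1$, which vanishes as $j\to\infty$ and gives $\liminf_p\tilde\ell_p(C)\geq\tilde\ell_{p_k}(C)-\gamma(C)/p_k$ directly. You instead use the exact tiling of $C$ by $N^{n+1}$ congruent subcubes, which eliminates the volume defect entirely but forces the lower index to be $\lfloor p/N^{n+1}\rfloor$; this is why you need the extra arithmetic lemma that, for each fixed $Q\geq 1$ and all $p\geq C_nQ^{n+2}$, some integer $N$ satisfies $\lfloor p/N^{n+1}\rfloor=Q$, and your mean-value estimate for the length of the interval $\bigl((p/(Q+1))^{1/(n+1)},(p/Q)^{1/(n+1)}\bigr]$ does settle this. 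What your route buys is a uniform inequality $\tilde\ell_p(C)\geq\tilde\ell_Q(C)-\gamma(C)/Q$ valid for all sufficiently large $p$, so you may take $\liminf_p$ before optimizing over $Q$; what the paper's route buys is the freedom to choose the cube volume as an arbitrary real number, avoiding any counting argument. Two minor remarks: the closed subcubes of your tiling overlap along faces, but only the ``pairwise disjoint interiors'' version of disjointness is used in the proof of the Lusternik--Schnirelmann inequality (and is exactly what the paper itself invokes in its applications), so this is harmless; and the division by $q=\lfloor p/N^{n+1}\rfloor$ in your first display requires $q\geq 1$, which your restriction to $Q\geq 1$ already ensures.
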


\begin{proof}
Choose sequences $\{p_k\}_{k \in \N}$ and $\{q_j\}_{j \in \N}$ such that
	\[\lim_k \tilde\ell_{p_k}(C) = \limsup_p \tilde\ell_p(C), \quad \mbox{and} \quad \lim_j \tilde\ell_{q_j}(C) = \liminf_p \tilde\ell_p(C).\]
For a fixed $k$ and all $j$ such that $\delta_j:=p_k/q_j<1$ let $N_j$ be the maximal number of cubes $\{C_i^*\}_{i=1}^{N_j}$ of volume $\delta_j$ contained in $C$ and with pairwise disjoint interiors. Note that $\delta_j N_j =\sum_i |C_i^*| \to 1$ as $j\to \infty$. Since
	\[\lfloor q_j|C_i^*| \rfloor = \lfloor q_j\delta_j \rfloor = p_k\]
from the Lusternik-Schnirelman inequality we obtain
	\[ \tilde \ell_{q_j}(C) \geq \sum_{i=1}^{N_j} |C_i^*| \tilde\ell_{p_k}(C) - \frac{\gamma(C)}{q_j\delta_j} = N_j\delta_j \tilde\ell_{p_k}(C) - \frac{\gamma(C)}{p_k}.\]
By letting $j \to + \infty$ it follows that
	\[ \liminf_p \tilde \ell_p(C) \geq \tilde\ell_{p_k}(C) - \frac{\gamma(C)}{p_k} \]
and thus
	\[ \liminf_p \tilde\ell_p(C) \geq \limsup_p\tilde\ell_p(C). \]
\end{proof}

We denote $\tau(n,W) := \lim_p\tilde\ell_p(C)$. Next, we prove that $\tilde\ell_p(\Omega) \to \tau(n,W)$. 

\begin{lemma} \label{lema:below}
$\displaystyle \liminf_{p} \tilde \ell_p(\Omega) \geq \tau(n,W)$.
\end{lemma}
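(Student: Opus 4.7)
The plan is to apply the Lusternik–Schnirelman inequality from the previous lemma in reverse: instead of packing a single reference region $\Omega$ with copies of itself, I pack the given $\Omega$ with many small cubes similar to the unit cube $C$, and then exploit the equality $\liminf_p \tilde\ell_p(C) = \limsup_p \tilde\ell_p(C) = \tau(n,W)$ that was just established.

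By the scaling identity \eqref{eq:scal} we may assume $|\Omega|=1$. For each small $\delta > 0$, let $N = N(\delta)$ be the maximal number of pairwise interior-disjoint cubes $C_1^*,\ldots,C_N^*$ of volume $\delta$ that fit inside $\Omega$. Since $\Omega$ has piecewise smooth (Lipschitz) boundary, we have $N(\delta)\,\delta \to |\Omega| = 1$ as $\delta \to 0^+$; this is the only place where the regularity of $\partial\Omega$ enters. Each $C_i^*$ is similar to the unit cube $C$ with $|C|=1$, so the hypotheses of the Lusternik–Schnirelman inequality apply with $\Omega_i = C$ and $\Omega_i^* = C_i^*$. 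Setting $p_i = \lfloor p\,\delta\rfloor$ and $V = \delta$, the inequality reads
\[
\tilde\ell_p(\Omega) \;\geq\; \sum_{i=1}^{N} |C_i^*|\, \tilde\ell_{p_i}(C) \;-\; \frac{\gamma(C)}{p\,\delta} \;=\; N\,\delta\, \tilde\ell_{\lfloor p\delta\rfloor}(C) \;-\; \frac{\gamma(C)}{p\,\delta}.
\]

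Now fix $\delta > 0$ and let $p \to +\infty$. Since $\delta$ is fixed, $\lfloor p\delta\rfloor \to +\infty$, and by the previous lemma
\[
\liminf_{p \to +\infty} \tilde\ell_{\lfloor p\delta\rfloor}(C) \;\geq\; \liminf_{q \to +\infty} \tilde\ell_q(C) \;=\; \tau(n,W).
\]
The error term $\gamma(C)/(p\delta)$ vanishes in the limit, so
\[
\liminf_{p \to +\infty} \tilde\ell_p(\Omega) \;\geq\; N(\delta)\,\delta \cdot \tau(n,W).
\]
Finally, letting $\delta \to 0^+$ and using $N(\delta)\,\delta \to 1$ gives the desired inequality $\liminf_p \tilde\ell_p(\Omega) \geq \tau(n,W)$.

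The only nontrivial point I expect to need care is the exhaustion $N(\delta)\,\delta \to |\Omega|$: for a general Lipschitz domain one can cover the interior with dyadic cubes and throw away those meeting a thin neighborhood of $\partial\Omega$, whose measure tends to $0$ by the Lipschitz boundary assumption. The rest is a direct, clean application of the Lusternik–Schnirelman inequality combined with the cube asymptotics just proved.
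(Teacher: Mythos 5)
Your proof is correct and follows essentially the same route as the paper: pack $\Omega$ with small disjoint cubes filling almost all of its volume, apply the Lusternik--Schnirelman inequality with $\Omega_i = C$, let $p \to \infty$ so the error term $\gamma(C)/(p\delta)$ vanishes, and then let the uncovered volume go to zero. The only cosmetic difference is that you use a maximal packing by equal-volume cubes (with $N(\delta)\,\delta \to 1$), while the paper allows cubes of varying volumes $v_i$ with total volume at least $1-\delta$; this changes nothing of substance.
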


\begin{proof}
Given $\delta>0$ there is a family $\{C_i^*\}_{i=1}^N$ of cubes with volume $v_i \in (0,1)$ contained in $\Omega$ with pairwise disjoint interiors and
	\[\sum_{i=1}^N |C_i^*| \geq 1-\delta.\]
From Lusternik-Schnirelman inequality we get
	\[\tilde\ell_p(\Omega) \geq \sum_{i=1}^N |C_i^*| \tilde\ell_{\lfloor p v_i \rfloor}(C) - \frac{\gamma(C)}{p\min_i v_i}.\]
Hence
	\[\liminf_p \tilde\ell_p(\Omega) \geq (1-\delta) \liminf_p \tilde\ell_p(C) = (1-\delta)\tau(n,W).\]
Since $\delta>0$ is arbitrary, this concludes the proof.
\end{proof}

The following lemma is proved in \cite{LioMarNev} and it shows that we can fill as much of the volume of $C$ as we want by domains which are similar to $\Omega$.

\begin{lemma} \label{lema:tess}
There is a sequence $\{\Omega_i^*\}_{i \in \N}$ of domains contained in $C$ which are similar to $\Omega$ and have pairwise disjoint interiors such that for all $\delta>0$ we can find $N=N(\delta) \in \N$ satisfying
	\[ \sum_{i=1}^N |\Omega_i^*| \geq 1-\delta.\]
\end{lemma}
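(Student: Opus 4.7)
The plan is to apply a Vitali-type covering argument to the family
\[
\mathcal{V} \;=\; \{\,\Omega' \subset C : \Omega' \text{ is similar to } \Omega\,\},
\]
regarded as a cover of $\mathrm{int}(C)$. The first step is to establish a uniform shape-regularity estimate for $\mathcal{V}$: since $\Omega$ is a Lipschitz domain with piecewise smooth boundary, it contains some Euclidean open ball, so the ratio $|\Omega'|/\mathrm{diam}(\Omega')^{n+1}$ is a similarity invariant that takes a constant positive value $c = c(\Omega)$ on all of $\mathcal{V}$. This is precisely the bounded-eccentricity hypothesis needed to invoke the classical Vitali covering theorem for Lebesgue measure on $\R^{n+1}$ (see, e.g., Federer's \emph{Geometric Measure Theory}, Section 2.8).

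Second, I would verify that $\mathcal{V}$ is a fine Vitali cover of $\mathrm{int}(C)$. Fix a reference point $y \in \mathrm{int}(\Omega)$ and, for each $x \in \mathrm{int}(C)$ and each $\lambda > 0$, consider the similar copy
\[
\Omega^{\lambda}_{x} \;:=\; x + \lambda(\Omega - y).
\]
For $\lambda$ small enough (depending on $x$), one has $\Omega^{\lambda}_{x} \subset C$, $\mathrm{diam}(\Omega^{\lambda}_{x}) < r$ for any prescribed $r > 0$, and $x$ lies in the interior of $\Omega^{\lambda}_{x}$. Hence every interior point of $C$ belongs to members of $\mathcal{V}$ of arbitrarily small diameter, which is the fine-cover condition.

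Applying the Vitali covering theorem to $\mathcal{V}$ then produces a countable subfamily $\{\Omega_i^*\}_{i \in \N} \subset \mathcal{V}$ with pairwise disjoint interiors and
\[
\Bigl| C \setminus \bigcup_{i=1}^{\infty} \Omega_i^* \Bigr| \;=\; 0,
\]
so that $\sum_{i=1}^{\infty} |\Omega_i^*| = |C| = 1$. Given $\delta > 0$, one chooses $N = N(\delta) \in \N$ so that the partial sum satisfies $\sum_{i=1}^{N} |\Omega_i^*| \geq 1 - \delta$, completing the proof.

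The only subtlety I anticipate is that the scale $\lambda$ in the fine-cover construction must shrink as $x$ approaches $\partial C$, so one cannot work with a single uniform scale. This is not a genuine obstacle, though: the Vitali covering theorem only demands fine covering on a set of full measure in $C$, and $\partial C$ has Lebesgue measure zero, so it suffices to work on $\mathrm{int}(C)$. The shape-regularity constant $c(\Omega)$ in the first step is what really drives the argument, as it bounds the overlap lost in each iteration of the Vitali selection and guarantees that the remaining uncovered set has measure shrinking to zero.
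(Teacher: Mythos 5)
Your argument is correct, but it is worth noting that the paper does not prove this lemma at all: it simply cites Liokumovich--Marques--Neves, where the statement is established by an elementary iterative packing. In that argument one first places a single scaled copy of $\Omega$ inside $C$ capturing a fixed fraction $c=c(\Omega)>0$ of the volume, then covers the uncovered remainder (up to an arbitrarily small error) by finitely many small cubes with disjoint interiors, places a scaled copy of $\Omega$ inside each of them capturing the same fraction $c$ of its volume, and iterates; after $k$ steps the uncovered volume is at most roughly $(1-c)^k$, which gives the conclusion with an explicit geometric rate. Your route instead invokes the Vitali covering theorem for the fine cover of $\mathrm{int}(C)$ by all similar copies of $\Omega$, which is shorter but relies on the less elementary form of the theorem valid for families of sets of uniformly bounded eccentricity rather than balls; to quote that version cleanly you should state the eccentricity hypothesis in the form most references use, namely that each copy $\Omega'$ contains a ball of radius $\rho\,\mathrm{diam}(\Omega')$ for a fixed $\rho=\rho(\Omega)>0$ (equivalently, Federer's Vitali-relation criterion), which indeed holds by similarity invariance since $\Omega$ has nonempty interior; the mere lower bound on $|\Omega'|/\mathrm{diam}(\Omega')^{n+1}$ is the right invariant but is not literally the hypothesis of the ball version of Vitali. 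Your observation that fineness is only needed on $\mathrm{int}(C)$, a set of full measure, is correct, and taking closed copies in the selection and then passing to interiors gives the required pairwise disjoint interiors. In short: both proofs work; the iterative construction is self-contained and quantitative, while your covering-theorem argument is more concise at the cost of citing a stronger classical result.
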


We conclude the proof of Theorem \ref{thm:domain} by proving:

\begin{lemma}
$\displaystyle \tau(n,W) \geq \limsup_p \tilde\ell_p(\Omega).$
\end{lemma}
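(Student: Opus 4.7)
The plan is to mirror the argument used earlier to prove $\liminf_p \tilde\ell_p(C) = \limsup_p \tilde\ell_p(C)$, but with similar copies of $\Omega$ playing the role of the subcubes of $C$. First, I would pick a subsequence $\{p_k\}_{k\in\N}$ with $\tilde\ell_{p_k}(\Omega) \to \limsup_p \tilde\ell_p(\Omega)$. Fixing $k$ and a small $\eta > 0$, for each large integer $q$ I would set $\delta_q := p_k/q \in (0,1)$ and produce a family of $N_q$ pairwise disjoint similar copies $\{\Omega_i^*\}_{i=1}^{N_q}$ of $\Omega$ contained in $C$, each of exact volume $\delta_q$, with $N_q \delta_q \geq 1-\eta$.

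This equal-volume tessellation is a slight strengthening of Lemma \ref{lema:tess}, which only provides disjoint copies of potentially different volumes. I would obtain it by iterating Lemma \ref{lema:tess}: partition $C$ into many small subcubes of side comparable to $\delta_q^{1/(n+1)}$, inscribe one scaled copy of $\Omega$ of volume $\delta_q$ inside each subcube (possible for $\delta_q$ sufficiently small relative to the subcube), and then fill the remaining space inside each subcube with further copies of $\Omega$ of volume $\delta_q$, iterating until the uncovered volume is below $\eta$. This is the main technical step, though conceptually straightforward.

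With the tessellation in hand, I would apply the Lusternik-Schnirelman inequality to $C$ (with $|C|=1$) and the copies $\{\Omega_i^*\}$, setting the ambient index to $q$. Since $\lfloor q\delta_q \rfloor = p_k$ by design, this yields
\[ \tilde\ell_q(C) \geq N_q \delta_q \, \tilde\ell_{p_k}(\Omega) - \frac{\gamma(\Omega)}{q \delta_q} = N_q\delta_q \,\tilde\ell_{p_k}(\Omega) - \frac{\gamma(\Omega)}{p_k}. \]
Sending $q \to \infty$ with $k$ and $\eta$ fixed, the left-hand side converges to $\tau(n,W)$ by the previous lemma, while $N_q\delta_q \geq 1-\eta$, so
\[ \tau(n,W) \geq (1-\eta)\,\tilde\ell_{p_k}(\Omega) - \frac{\gamma(\Omega)}{p_k}. \]
Finally, letting $k \to \infty$ (so the error $\gamma(\Omega)/p_k$ vanishes) and then $\eta \to 0$ gives $\tau(n,W) \geq \limsup_p \tilde\ell_p(\Omega)$, completing the proof. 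Combined with Lemma \ref{lema:below}, this establishes $\lim_p \tilde\ell_p(\Omega) = \tau(n,W)$ and hence Theorem \ref{thm:domain}.
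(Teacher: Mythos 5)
Your reduction to the Lusternik--Schnirelman inequality is fine once the tessellation is granted, but the tessellation you postulate is exactly where the argument breaks. The claim that for every small $\delta_q$ one can place pairwise disjoint copies of $\Omega$, \emph{all of the same volume} $\delta_q$, inside $C$ with total volume $N_q\delta_q\geq 1-\eta$ is not a "slight strengthening" of Lemma \ref{lema:tess}; it is false for general $\Omega$. Take $\Omega$ a ball (a legitimate Lipschitz domain with smooth boundary): disjoint congruent balls of a fixed volume can never fill more than the sphere-packing density of $\R^{n+1}$, which is strictly less than $1$, no matter how small $\delta_q$ is. Your iteration does not repair this: after the first round the uncovered gaps are thinner than the diameter of any copy of volume $\delta_q$, so no further equal-size copies fit. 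The iteration behind Lemma \ref{lema:tess} works precisely because the copies shrink at each stage; with a fixed size the achievable fraction is capped by the packing density of $\Omega$, so your final inequality only gives $\tau(n,W)\geq\theta\,\limsup_p\tilde\ell_p(\Omega)$ with $\theta<1$, which is not the statement. The equal-size trick is exactly what is used for the cube in the lemma $\liminf_p\tilde\ell_p(C)=\limsup_p\tilde\ell_p(C)$, and it is special to domains that tile space.

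The paper circumvents this with a two-scale construction. Fix the (unequal-size) family $\{\Omega_i^*\}$ of Lemma \ref{lema:tess} once and for all, tile $C$ by $N_p$ small \emph{cubes} $C_j^*$ of equal volume $\delta_p$ (cubes do tile, so $\delta_pN_p\to 1$), and place inside each $C_j^*$ a scaled copy of the whole family, with the scale $\delta_p=q_k/(p|\Omega_1^*|)$ tuned so that the distinguished copy $\Omega_{1,j}$ in every small cube receives index exactly $q_k$, the subsequence realizing $\beta=\limsup_p\tilde\ell_p(\Omega)$. The remaining copies get indices $\lfloor q_k|\Omega_i^*|/|\Omega_1^*|\rfloor\to\infty$ as $k\to\infty$, and those terms are bounded below using Lemma \ref{lema:below}; this yields $\tau(n,W)\geq|\Omega_1^*|\beta+\tau(n,W)(1-\delta-|\Omega_1^*|)$, hence $(\delta+|\Omega_1^*|)\tau(n,W)\geq|\Omega_1^*|\beta$ and $\tau(n,W)\geq\beta$ as $\delta\to 0$. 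If you want to keep your outline, you must replace your equal-volume packing by some such device in which only a fixed positive fraction of the volume carries the index $q_k$ and the rest is controlled by the liminf bound.
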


\begin{proof}
As in \cite{LioMarNev}, we will use the previous lemma for each cube of a maximal disjoint collection of cubes in $C$ with small volume. By applying the Lusternik-Schnirelman inequality to $C$ and the family of all these domains similar to $\Omega$ we get the desired inequality.

Choose a sequence $\{q_k\}_k$ such that $\lim_k \tilde\ell_{q_k}(\Omega) = \limsup_p\tilde\ell_p(\Omega)=:\beta$. Consider the family $\{\Omega_i^*\}$ given by Lemma \ref{lema:tess}. For a fixed $k$ and all $p \in \N$ such that $\delta_p := q_k/(p|\Omega_1^*|)<1$ let $N_p$ be the maximal number of cubes $\{C_j^*\}_{j=1}^{N_p}$ contained in $C$ with pairwise disjoint interiors and volume $\delta_p$. Again we have $\delta_pN_p \to 1$ as $p\to+\infty$.

For all $\delta>0$ and each $j=1,\ldots,N_p$, by Lemma \ref{lema:tess} we can choose regions $\{\Omega_{i,j}\}_{i=1}^N$ inside $C_j^*$ with pairwise disjoint interiors and similar to $\Omega$ such that
	\[|\Omega_{i,j}|=|C_j^*||\Omega_i^*| =\delta_p |\Omega_i^*|.\]
If $v=\min\{|\Omega_i^*|: i=1,\ldots, N\}$ and
	\[p_i:=\lfloor p|\Omega_{i,j}| \rfloor = \lfloor p\delta_p |\Omega_i^*| \rfloor = \left\lfloor q_k \frac{|\Omega_i^*|}{|\Omega_1^*|} \right\rfloor,\]
then
	\[\min\{|\Omega_{i,j}|: i=1,\ldots, N, j=1,\ldots, N_p\} = \delta_p v\]
and
	\begin{align*}
		\tilde\ell_p(C) & \geq \sum_{j=1}^{N_p} \sum_{i=1}^N |\Omega_{i,j}|\tilde\ell_{p_i}(\Omega) - \frac{\gamma(\Omega)}{p\delta_p v} \\
		&= \delta_p N_p\left( |\Omega_1^*|\tilde\ell_{q_k}(\Omega) + \sum_{i=2}^N|\Omega_i^*|\tilde\ell_{p_i}(\Omega) \right) - \frac{\gamma(\Omega)|\Omega_1^*|}{q_k v}.
	\end{align*}
Hence
	\[\tau(n,W) \geq |\Omega_1^*|\tilde\ell_{q_k}(\Omega) + \sum_{i=2}^N |\Omega_i^*|\tilde\ell_{\left\lfloor q_k |\Omega_i^*|/|\Omega_1^*| \right\rfloor}(\Omega) - \frac{\gamma(\Omega)|\Omega_1^*|}{q_kv}\]
and by letting $k\to +\infty$ and using Lemma \ref{lema:below} we get
	\[\tau(n,W) \geq |\Omega_1^*|\beta + \liminf_p \tilde\ell_p(\Omega) \sum_{i=2}^N |\Omega_i^*| \geq |\Omega_1^*|\beta + \tau(n,W)(1-\delta-|\Omega_1^*|).\]
Therefore
	\[(\delta + |\Omega_1^*|)\tau(n,W) \geq |\Omega_1^*|\beta\]
and, since $\delta$ is arbitrary, $\tau(n,W) \geq \beta$, as we wanted to prove.
\end{proof}

\section{Weyl Law for closed manifolds}\label{sec:weylman}
Consider a closed Riemannian manifold $(M^{n+1},g)$. As in the previous section, we will denote by $C$ the unit cube in $\R^{n+1}$ and 
	\[\tau(n) = \tau(n,W) = \lim_p \tilde\ell_p(C) = \lim_p p^{-\frac{1}{n+1}}\ell_p(C).\]
In this section we prove:

\begin{teo}[Weyl Law for the phase transition spectrum] \label{thm:weyl_}
For all compact Riemannian manifolds $(M^{n+1},g)$, possibly with a nonempty piecewise smooth boundary, it holds
	\[\lim_{p \to +\infty} p^{-\frac{1}{n+1}} \ell_p(M) = \tau(n)\vol(M,g)^{\frac{n}{n+1}},\]

\end{teo}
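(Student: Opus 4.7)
The plan is to follow the strategy of Liokumovich-Marques-Neves \cite{LioMarNev} for the volume spectrum, using Theorem \ref{thm:domain} as the Euclidean input and transferring estimates to $(M,g)$ via local charts controlled by Lemma \ref{lema:var}. By the homogeneity identity \ref{eq:scal} we may normalize $\vol(M,g)=1$ and abbreviate $\tilde\ell_p(M) := p^{-1/(n+1)}\ell_p(M)$. The statement then reduces to proving separately that $\liminf_p \tilde\ell_p(M)\geq \tau(n)$ and $\limsup_p \tilde\ell_p(M)\leq \tau(n)$.

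For the lower bound, fix $\eta>0$ and use the exponential map together with a Vitali-type packing to produce pairwise disjoint open sets $\Omega_1^*,\ldots,\Omega_N^*\subset M$ and diffeomorphisms $F_i\colon \Omega_i\to \Omega_i^*$ from Euclidean cubes $\Omega_i$ with $\max(\|DF_i\|_\infty,\|DF_i^{-1}\|_\infty)\leq 1+\eta$ and $\sum_i |\Omega_i^*|_g\geq 1-\eta$. The proof of the Lusternik-Schnirelmann inequality in Section \ref{sec:weyl} transfers verbatim -- one applies $\ind$-subadditivity to the sets $A_i=\{u\in A\colon E_\e(u,\Omega_i^*)\leq c_\e(p_i,\Omega_i^*)-\delta/N\}$, with $p_i=\lfloor p|\Omega_i^*|_g\rfloor$ -- except that the exact scaling \ref{eq:scal} is replaced by the bilipschitz distortion estimate of Lemma \ref{lema:var}. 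This yields
\[\tilde\ell_p(M) \;\geq\; (1+\eta)^{-(n+2)}\sum_{i=1}^N |\Omega_i^*|_g\,\tilde\ell_{p_i}(\Omega_i)-O(1/p),\]
and passing to $p\to\infty$ via Theorem \ref{thm:domain}, then $\eta\to 0$, gives $\liminf\tilde\ell_p(M)\geq \tau(n)$.

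For the upper bound one must exhibit, for each $\e,\eta>0$ and large $p$, an admissible family $A\in \mathcal{F}_p(M)$ whose maximal energy is bounded by $\tau(n)(1+\eta)^{n+2} p^{1/(n+1)}+o(p^{1/(n+1)})$. Triangulate $M$ into near-Euclidean cells $T_1,\ldots, T_N$, each $(1+\eta)$-bilipschitz to a Euclidean simplex $\Sigma_i$, and split $p=p_1+\cdots+p_N$ with $p_i/p$ close to $|T_i|_g$. In each $\Sigma_i$ pick a near-optimal family $B_i\in \mathcal{F}_{p_i}(\Sigma_i)$ for $c_\e(p_i,\Sigma_i)$ whose elements are uniformly bounded by $1$. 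Transfer $B_i$ to $T_i$ via $F_i$, extending each member to $H^1(M)$ by the constant $+1$ outside $T_i$ through a thin smoothing collar at $\partial T_i$ which preserves antipodal symmetry and adds only $o(1)$ energy. Assemble the resulting symmetric families $\tilde B_i\subset H^1(M)$ into a single $A\in \mathcal{F}_p(M)$ using a $\Z/2\Z$-equivariant join construction modelled on the sweepout combination in \cite{GasparGuaraco}: from parametrizations $X_i\to \tilde B_i$ with $\ind(X_i)\geq p_i+1$ one produces a map $X_1*\cdots*X_N\to H^1(M)$ whose image has index $\geq \sum_i(p_i+1)>p$ and whose energy on each join parameter is controlled by the sum of energies on the local pieces.

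The technical heart, and the main obstacle, is this final assembly step. One must arrange simultaneously that (i) the $\Z/2\Z$-action $u\mapsto -u$ on $H^1(M)$ is compatible with the antipodal action on $X_1*\cdots*X_N$, and (ii) the energy of a general join element exceeds the sum of the individual energies by at most $o(p^{1/(n+1)})$. Both are handled by exploiting the essentially disjoint supports of the local pieces together with $W(\pm 1)=0$, in a manner parallel to the $p$-sweepout joins used in Almgren-Pitts theory \cite{MarquesNevesInfinite}. Once the upper bound $\overline\ell_p(M)\leq \tau(n)(1+\eta)^{n+2} p^{1/(n+1)}+o(p^{1/(n+1)})$ is in hand, letting $\eta\to 0$ and combining with the lower bound concludes the proof; the general case $\vol(M,g)\neq 1$ is recovered from the homogeneity \ref{eq:scal}.
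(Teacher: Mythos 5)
Your lower bound half is essentially the paper's argument (disjoint near-Euclidean pieces, the Lusternik--Schnirelmann inequality via subadditivity of $\ind$, Lemma \ref{lema:var}, and Theorem \ref{thm:domain}), and it is fine. The upper bound is where you depart from the paper, and your route has a genuine gap at exactly the step you call ``the technical heart'': the equivariant assembly. As described it cannot work, because extending the members of the local families by the constant $+1$ outside $T_i$ is incompatible with the $\Z/2\Z$ symmetry: if $u$ is extended by $+1$, then $-u$ is extended by $-1$, so the parametrization of your glued family is not odd and neither the monotonicity property (I2) nor any join superadditivity of the Fadell--Rabinowitz index can be invoked to conclude $\ind\geq p+1$. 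The only symmetric background value is $0$, but a competitor equal to $0$ on a region of fixed volume carries energy $W(0)|\cdot|/\varepsilon\to+\infty$ as $\varepsilon\to 0$, so you cannot simultaneously keep the family symmetric and keep its maximal energy of order $p^{1/(n+1)}$ by naive disjoint-support gluing. This is precisely the tension hidden in your claim that (i) and (ii) are ``handled by exploiting the essentially disjoint supports together with $W(\pm1)=0$'': point (ii) wants the background $\pm1$, point (i) wants the background $0$, and no argument is given that reconciles them; nor is the asserted energy estimate for a general join element $(t_1x_1,\ldots,t_Nx_N)$ justified, since $E_\varepsilon$ is not additive (or even subadditive up to $o(p^{1/(n+1)})$) under the kind of interpolation a join parameter forces.

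The paper avoids joins altogether by transferring sweepouts in the opposite direction. One glues the Euclidean models $C_i$ of the cells into a single connected Lipschitz domain $\Omega=\bigcup_i C_i\cup\bigcup_i T_i\subset\R^{n+1}$ with thin tubes $T_i$ and $|\Omega|\leq(1+\eta)^{n+1}\vol(M,g)$, where Theorem \ref{thm:domain} already gives the Weyl law. Given $A\in\mathcal{F}_p(\Omega)$, each $u\in A$ is truncated by $\pm v_{\delta,\varepsilon}$ as in \eqref{eq:vie}, with $v_{\delta,\varepsilon}=\psi_\varepsilon(\mathrm{dist}(\cdot,\partial C_i))$ built from the heteroclinic profile; this operation is odd and continuous in $u$, forces the truncation $w_\varepsilon$ to vanish on every $\partial C_i$ (the value $0$ being symmetric, gluing is now equivariant), and costs only an additive error $2\sigma(1+\eta)\beta(\Omega)+W(\psi(\delta/\varepsilon))\varepsilon^{-1}|\Omega|$, independent of $p$. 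Transplanting $w_\varepsilon$ cell by cell back to $M$ via the bilipschitz maps and using (I2) yields $c_{\varepsilon/(1+\eta/2)}(p,M)\leq(1+\eta/2)^{n+2}\bigl(c_\varepsilon(p,\Omega)+\text{const}\bigr)$, and since the constant does not grow with $p$ it disappears after dividing by $p^{1/(n+1)}$, giving $\limsup_p p^{-1/(n+1)}\ell_p(M)\leq(1+\eta)^{2n+3}\tau(n)\vol(M,g)^{n/(n+1)}$. If you wish to salvage your direction of transfer, you would in any case have to replace the constant extension by a truncation with zero boundary values of exactly this type, at which point a single sweepout of the connected domain $\Omega$ suffices and the equivariant join becomes unnecessary.
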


First, we show:

\begin{prop}
It holds
	\[\liminf_{p \to + \infty} p^{-\frac{1}{n+1}}\ell_p(M) \geq \tau(n) \vol(M,g)^{\frac{n}{n+1}}.\]
\end{prop}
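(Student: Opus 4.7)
The plan is to adapt the Euclidean Lusternik--Schnirelmann inequality from Section \ref{sec:weyl} to the manifold setting by tiling most of $M$ by pieces bi-Lipschitz close to Euclidean cubes, running the same min-max dissection inside $M$, and reducing to Theorem \ref{thm:domain}. Set $V = \vol(M,g)$ and fix $\delta > 0$.

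First, I would select pairwise disjoint Lipschitz subdomains $B_1,\ldots,B_N \subset M$ together with $(1+\delta)$-bi-Lipschitz diffeomorphisms $F_i\colon \Omega_i \to B_i$, where each $\Omega_i \subset \R^{n+1}$ is a Euclidean cube, such that $\sum_i \vol_g(B_i) \geq (1-\delta)V$. This is a standard normal-coordinate tiling argument: cover $M$ by finitely many charts in which $g$ is $(1+\delta)$-close to the flat metric, subdivide each chart into small flat cubes, and discard the cubes that cross chart boundaries.

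Next, I would run the Lusternik--Schnirelmann argument from Section \ref{sec:weyl} verbatim, but on $M$. Define $p_i := \lfloor p\,\vol_g(B_i)/V \rfloor$, so that $\sum_i p_i \leq p$. For $A \in \mathcal{F}_p(M)$ and small $\e > 0$, put
\[
	A_i := \{u \in A : E_\e(u, B_i) \leq c_\e(p_i, B_i) - \delta/N\}.
\]
The key verification is $\ind(A_i) \leq p_i$: for $\e$ small enough (depending on $p$), Theorem \ref{teo:exist}(3) gives $c_\e(p_i, B_i) < E_\e(0, B_i)$, which forces $u|_{B_i} \not\equiv 0$ for every $u \in A_i$; hence the equivariant continuous restriction $u \mapsto u|_{B_i}$ maps $A_i$ into $H^1(B_i)\setminus\{0\}$, and property (I2) combined with the definition of $c_\e(p_i, B_i)$ gives $\ind(A_i) \leq \ind(A_i|_{B_i}) \leq p_i$. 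Subadditivity produces $u \in A \setminus \bigcup_i A_i$, and since the $B_i$ are pairwise disjoint
\[
	\sup_{A} E_\e \geq E_\e(u, M) \geq \sum_i E_\e(u, B_i) \geq \sum_i c_\e(p_i, B_i) - \delta;
\]
taking infimum over $A$ and $\delta \to 0$ yields $c_\e(p, M) \geq \sum_i c_\e(p_i, B_i)$. Applying Lemma \ref{lema:var} with $F = F_i$ to any family in $\mathcal{F}_{p_i}(B_i)$ yields $c_{\e/(1+\delta)}(p_i, \Omega_i) \leq (1+\delta)^{n+2} c_\e(p_i, B_i)$, and taking $\liminf_{\e \to 0^+}$ gives
\[
	\ell_p(M) \geq (1+\delta)^{-(n+2)} \sum_{i=1}^N \ell_{p_i}(\Omega_i).
\]

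Finally, I would invoke the Euclidean Weyl law (Theorem \ref{thm:domain}): $p_i^{-1/(n+1)}\ell_{p_i}(\Omega_i) \to \tau(n)|\Omega_i|^{n/(n+1)}$ as $p \to \infty$. Combined with $|\Omega_i| \geq (1+\delta)^{-(n+1)}\vol_g(B_i)$, $p_i/p \to \vol_g(B_i)/V$, and $\sum_i \vol_g(B_i) \geq (1-\delta)V$, this yields
\[
	\liminf_{p \to \infty} p^{-1/(n+1)}\ell_p(M) \geq (1+\delta)^{-(2n+2)}(1-\delta)\,\tau(n)\,V^{n/(n+1)},
\]
and letting $\delta \to 0$ concludes the proof. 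The main technical nuisance is the bookkeeping of the three independent error factors -- the bi-Lipschitz distortion $(1+\delta)$, the floor $\lfloor p\,\vol_g(B_i)/V\rfloor$, and the volume deficit $1-\sum_i \vol_g(B_i)/V$ -- so that all multiplicative errors tend to $1$ as $\delta \to 0$; the almost-Euclidean cover of $M$ is standard and follows \cite{LioMarNev}.
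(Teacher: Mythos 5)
Your proof is correct and follows essentially the same route as the paper: cover most of $\vol(M,g)$ by disjoint almost-Euclidean pieces (the paper uses small geodesic balls with $(1+\delta)$-distortion exponential charts and rescales to the unit ball, rather than bi-Lipschitz images of cubes), run the Lusternik--Schnirelmann/index-subadditivity dissection on $M$ to get $c_\e(p,M)\geq\sum_i c_\e(p_i,B_i)$, transfer each piece to its Euclidean model via Lemma \ref{lema:var}, apply Theorem \ref{thm:domain}, and let $\delta\to 0$. One minor remark: the step forcing $u|_{B_i}\not\equiv 0$ is unnecessary, since families containing $0$ have $\ind=+\infty$ by convention and are admissible competitors for $c_\e(p_i,B_i)$; in any case the fact you invoke follows from Theorem \ref{teo:exist}(1) (together with the $\delta/N$ slack), not from part (3).
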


\begin{proof}
Without loss of generality we may assume $\vol(M,g)=1$. Given $\delta>0$ there is $\bar r>0$ such that for all $r \in (0,\bar r]$ and $ x \in M$ the Euclidean metric $g_0=(\exp_x^{-1})^*g_{eucl}$ induced on the geodesic ball $\mathcal{B}_r(x) \subset M$ satisfies $(1+\delta)^{-2}g \leq g_0 \leq (1+\delta)^{2}g$, and consequently
	\[(1+\delta)^{-(n+1)} \vol(\mathcal{B}_r(x)) \leq |B_r(0)| \leq (1+\delta)^{n+1} \vol(\mathcal{B}_r(x)).\]
Moreover, by arguing as we did in Lemma \ref{lema:var} we see that for all $\e>0$ and $u \in H^1(\mathcal{B}_r(x))$ we have
	\[E_\e(u,\mathcal{B}_r(x)) \geq (1+\delta)^{-(n+2)} E_{\e/(1+\delta)}(u \circ \exp_x, B_r(0))\]
and consequently
	\[c_\e(p,\mathcal{B}_r(x)) \geq (1+\delta)^{-(n+2)}c_{\e/(1+\delta)}(p,B_r(0)), \quad \mbox{for all} \quad p \in \N.\]

Choose a collection $\{\mathcal{B}_i\}_{i=1}^N$ of pairwise disjoint geodesic balls in $M$ with radius $r_i\leq \bar r$ and such that $\sum_i \vol(\mathcal{B}_i) \geq (1+\delta)^{-1}$. If $B=B_1(0)$ is the unit ball in $\R^{n+1}$ and $B_i$ is for each $i=1,\ldots, N$ an Euclidean ball of radius $r_i$ then we see that
	\[c_\e(p,M) \geq \sum_{i=1}^N c_\e\left( \left\lfloor p \vol(\mathcal{B}_i)\right\rfloor, \mathcal{B}_i \right).\]
Therefore by writing $p_i = \lfloor p \vol{\mathcal{B}_i} \rfloor$ and $\e_i=\e/((1+\delta)|B_i|^{1/(n+1)})$ we get
	\begin{align*}
		p^{-\frac{1}{n+1}}c_\e(p,M) & \geq p^{-\frac{1}{n+1}}c_\e(p_i,\mathcal{B}_i)\\
		& \geq (1+\delta)^{-(n+2)} p^{-\frac{1}{n+1}} c_{\e/(1+\delta)}(p_i,B_i)\\
		& = (1+\delta)^{-(n+2)} p^{-\frac{1}{n+1}} \sum_{i=1}^N |B_i|^{\frac{n}{n+1}} c_{\e_i}(p_i,B)\\
		& \geq (1+\delta)^{-(n+2)} \sum_{i=1}^N |B_i|\left(\frac{p_i}{p|B_i|}\right)^{\frac{1}{n+1}}p_i^{-\frac{1}{n+1}} c_{\e_i}(p_i,B)\\
		& \geq (1+\delta)^{-(2n+3)} \sum_{i=1}^N \vol(\mathcal{B}_i)\left( \frac{\vol(\mathcal{B}_i)}{|B_i|} - \frac{1}{p|B_i|} \right)^{\frac{1}{n+1}}p_i^{-\frac{1}{n+1}}c_{\e_i}(p_i,B).
	\end{align*}
Thus
	\[p^{-\frac{1}{n+1}}\ell_p(M) \geq (1+\delta)^{-(2n+3)}\sum_{i=1}^N \vol(\mathcal{B}_i)\left( \frac{\vol(\mathcal{B}_i)}{|B_i|} - \frac{1}{p|B_i|} \right)^{\frac{1}{n+1}} p_i^{-\frac{1}{n+1}}\ell_{p_i}(B).\]
Finally, the Weyl Law for Euclidean domains implies
	\begin{align*}
		\liminf_p p^{-\frac{1}{n+1}}\ell_p(M) & \geq (1+\delta)^{-(2n+3)} \sum_{i=1}^N \vol(\mathcal{B}_i) (1+\delta)^{-1} \tau(n)|B|^{\frac{n}{n+1}}\\
		& = (1+\delta)^{-2(n+2)} \tau(n) \sum_{i=1}^N \vol(\mathcal{B}_i) \geq (1+\delta)^{-(2n+5)}\tau(n).
	\end{align*}
Since $\delta>0$ is arbitrary, the inequality above concludes the proof of the theorem.
\end{proof}

In order to prove Theorem \ref{thm:weyl_} it remains to show that
	\[\limsup_{p \to +\infty} p^{-\frac{1}{n+1}}\ell_p(M) \leq \tau(n) \vol(M,g)^{\frac{n}{n+1}}.\]
Our proof differs from the one in \cite{LioMarNev} as we don't have the spaces of chains in our disposal to perform the cutting and gluing argument. Nevertheless we will still use the strategy of decomposing $M$ into domains $\{\mathcal{C}_i\}$ with piecewise smooth boundaries which are bi-Lipschitz equivalent to Euclidean domains $C_i$ having also piecewise smooth boundaries, and glue them by small tubes obtaining $\Omega \subset \R^{n+1}$ on which we know that the Weyl law holds. Then we follow the ideas of \cite{Guaraco} and \cite{GasparGuaraco} to modify a given $p$-sweepout $A \in \mathcal{F}_p(\Omega)$ so that it induces a $p$-sweepout of $M$ gluing back the domains $C_i$. More precisely, given $u \in A$ we construct a function $w_{\e} \in H^1(\Omega)$ which vanishes in the boundary of all $C_i$, and this allows us to extend it to $\Omega$ in a way that, roughly,
	\[E_\e(w_\e, \Omega) \leq E_\e(u,\Omega) + 2\sigma \sum_{i=1}^N \mathcal{H}^{n}(\partial C_i)+O(\e),\]
in terms of $\e$. This implies a similar inequality for $c_\e(p,\Omega)$ in terms of $c_\e(p,M)$ and the area of the boundaries $\partial C_i$ similarly to \cite{LioMarNev}. Hence we obtain $\limsup_p \tilde\ell_p(M) \leq \tau(n)|\Omega|^{n/(n+1)}$. The Theorem follows then by noting that we can choose $\Omega$ so that its volume is as close to $\vol(M,g)$ as we want.

Firstly we choose a decomposition of $M$ in a similar manner to \cite[\S 4.2]{LioMarNev}. More precisely given $\eta>0$ there is a collection $\{\mathcal{C}_i\}_{i=1}^N$ of domains in $M$ with piecewise smooth boundary and the following properties. Here we denote by $d_i$ the Euclidean distance function $\mathrm{dist}(x,\partial C_i)$ for $x \in C_i$, for each $i=1,\ldots, N$.
	\begin{enumerate}
		\item Each $\mathcal{C}_i$ is $(1+\eta/2)$-biLipschitz diffeomorphic to a domain $C_i \subset \R^{n+1}$ with piecewise smooth $\partial {C}_i$ endowed with the Euclidean metric.
		\item $\{\overline{\mathcal{C}}_i\}$ covers $M$.
		\item The domains $\mathcal{C}_i$ have mutually disjoint interiors.
		\item Given $\eta_1>0$ there is $s_0>0$ such that, for all $s \in (-s_0,s_0)$, we have $\mathcal{H}^n(\{d_i=s\}\cap C_i)\leq (1+\eta_1) \mathcal{H}^n(\partial C_i)$.
	\end{enumerate}
The existence of such a cover is proved in \cite{LioMarNev}. To see why the last property holds it suffices to see that $\{d_i=s\}$ is contained in a union of spheres of radii $r - s$, for some $r=r(M)>0$. Alternatively we can construct $\{\mathcal{C}_i\}$ using a sufficiently fine triangulation -- or cubulation -- of $M$, as in \cite{GasparGuaraco}. Clearly we may assume that the domains $C_i$ are pairwise disjoint in $\R^{n+1}$. Moreover we can construct an Euclidean domain $\Omega \subset \R^{n+1}$ such that 
	\[\Omega = \bigcup_{i=1}^NC_i \cup \bigcup_{i=1}^{N-1}T_i\]
where each $T_i$ is a tube -- e.g. it is diffeomorphic to $S^{n}\times[0,1]$ -- connecting $C_{i}$ to $C_{i+1}$, such that $\{C_i\}\cup\{T_i\}$ have pairwise disjoint interiors and $|T_i|$ may be chosen as small as we want. In particular we may assume
	\[|\Omega| \leq (1+\eta)^{n+1}\vol(M,g).\]
We can also suppose (by suitably chosing $T_i$ and making $s_0$ smaller, if necessary) that
	\[\mathcal{H}^n(\{\mathrm{dist}(\cdot, \partial T_i)=s\} \cap T_i) \leq (1+\eta_1)\mathcal{H}^n(\partial T_i)\]
for all $|s|\leq s_0$.

We recall now some properties of the $1$-dimensional heteroclinic solution of \eqref{eq:ac}, as presented in \cite[\S 7.3]{Guaraco} Let $\psi$ denote the solution to the IVP
	\[\left\{ \begin{array}{rcl} \psi' & = & \sqrt{2W(\psi)} \\  \psi(0)&=&0 \end{array} \right. .\]
Then $\psi$ solves \eqref{eq:ac} in $\R$ for $\e=1$, and it holds:
	\begin{enumerate}
		\item $|\psi|<1$ and $\psi$ is monotone increasing.
		\item $\psi(s) \to \pm 1$ as $s \to \pm \infty$.
		\item $sW(\psi(s)) \to 0$ as $s \to \pm \infty$.
		\item $\int_{\R} (\psi')^2/2 +W(\psi) = 2\sigma = \int_{-1}^1\sqrt{2W}$.
	\end{enumerate}
Given $\e>0$ we denote also $\psi_\e(s)=\psi(s/\e)$ for $s \in \R$. Clearly $\psi_\e$ solves \eqref{eq:ac} in $\R$. Denote by $d:\bar\Omega \to \R$ the function given by $d(x) = d_i(x)$, if $x \in \bar{\mathcal{C}_i}$, and $d(x)=\mathrm{dist}(x,\partial T_i)$ if $ x \in \bar{T_i}$. Clearly $d$ is a Lipschitz function and it satisfies the Eikonal equation $|\nabla d|=1$ almost everywhere in $\Omega$. Now fix $\delta>0$ and define $v_{\delta,\e}:\Omega \to \R$ by
	\begin{equation} \label{eq:vie}
		v_{\delta,\e}(x) = \left\{ \begin{array}{cl} \psi_\e(d(x)), & \mbox{if}\quad d(x)\leq \delta \\ \psi_\e(\delta), & \mbox{if} \quad d(x) >\delta \end{array} \right. .
	\end{equation}
Again we can verify that $v_{\delta,\e}$ is a Lipschitz function, and moreover
	\[ |\nabla v_{\delta,\e}|_x| = \left\{ \begin{array}{cl} \psi_\e'(d(x)), & \mbox{for a.e. $x \in \{d \leq \delta\} \cap \Omega$} \\ 0, & \mbox{for all $x \in \{d > \delta\} \cap \Omega$} \end{array} \right. . \]
Now given $u:\Omega \to \R$ we define a new function $w_{\e}:\Omega \to \R$ by truncating $u$ by $\pm v_{\delta,\e}$ in $\Omega$, that is
	\[w_{\e}(x) = \max\{-v_{\delta,\e}(x), \min\{ u(x),v_{\delta,\e}(x) \} \} \}, \quad \mbox{for} \quad x \in \Omega.\]
If $u \in H^1(\Omega)$ then we have $w_{\e} \in H^1(\Omega)$ and $|\nabla w_{\e}|=|\nabla u|$ a.e. in $\{|u|\leq v_{\delta,\e}\} \cap \Omega$, whereas $|\nabla w_{\e}| = \psi_\e'(d)$ almost everywhere in $\{|u|>v_{\delta,\e}\}\cap \Omega$. Hence
	\begin{align*}
		E_\e(w_\e, C_i) &= \int_{\{|u|\leq v_{\delta,\e}\}\cap C_i} \left(\frac{\e|\nabla u|^2}{2} + \frac{W(u)}{\e} \right) + \int_{\{ |u|>v_{\delta,\e} \}\cap C_i}\left(\frac{\e\psi_\e'(d_i)^2}{2} + \frac{W(\psi_\e(d_i))}{\e}  \right)\\
		& \leq E_\e(u, C_i) + I_1 + I_2
	\end{align*}
for each $i=1,\ldots, N$, where
	\[I_1  = \int_{\{d_i>\delta\}\cap C_i} \frac{W(\psi_\e(\delta))}{\e} \leq \frac{W(\psi(\delta/\e))}{\e}|C_i|,\]
and
	\begin{align*}
		I_2 & = \int_{\{d_i\leq \delta\}\cap C_i} \left( \frac{\e(\psi_\e'(d_i))^2}{2} + \frac{W(\psi_\e)}{\e} \right)\\
		& = \frac{1}{\e}\int_{-\delta}^{\delta} \left( \frac{\psi'(t/\e)^2}{2} + W(\psi(t/\e)) \right)\mathcal{H}^n(\{d_i=t\}\cap C_i)\,dt\\
		& = \int_{-\delta/\e}^{\delta/\e}\left(\frac{\psi'(s)^2}{2} + W(\psi(s))\right) \mathcal{H}^n(\{d_i=\e s\}\cap C_i)\,ds\\
		&\leq \left( \int_\R (\psi')^2/2 + W(\psi) \right)\left( \sup_{|s|\leq \delta}\mathcal{H}^n(\{d_i=s\})\right) \leq 2\sigma(1+\eta)\mathcal{H}^n(\partial C_i)
	\end{align*}
provided we pick a sufficiently small $\delta$ depending only on $M$ (and the cover $\{\mathcal{C}_i\}$) and $\eta_1 \leq \eta$. Similarly for $i=1,\ldots, N-1$ we have
	\begin{align*}
		E_\e(w_\e,T_i) & = \int_{\{|u|\leq v_{\delta,\e}\cap T_i\}} \left( \frac{\e|\nabla u|^2}{2} + \frac{W(u)}{\e} \right)\\
		& \qquad + \int_{\{|u|>v_{\delta,\e}\cap T_i\}} \left( \frac{\e\psi'(\mathrm{dist}(\cdot,\partial T_i))^2}{2} + \frac{W(\psi_\e(\mathrm{dist}(\cdot,\partial T_i)))}{\e} \right)\\
		& \leq E_\e(u,T_i) + \frac{W(\psi(\delta/\e))}{\e}|T_i| + 2\sigma(1+\eta)\mathcal{H}^n(\partial T_i)
	\end{align*}
for sufficiently small $\delta>0$. Therefore
	\begin{align*}
		E_\e(w_\e,\Omega) & = \sum_{i=1}^N E_\e(w_\e,C_i) + \sum_{i=1}^{N-1}E_\e(w_\e,T_u)\\
		& \leq E_\e(u,\Omega) + \frac{W(\psi(\delta/\e))}{\e}|\Omega| +2\sigma(1+\eta)\beta(\Omega),
	\end{align*}
where $\beta(\Omega) = \sum_{i=1}^N \mathcal{H}^n(\partial C_i) + \sum_{i=1}^{N-1}\mathcal{H}^n(\partial T_i)$. On the other hand, since $w_\e$ vanishes on $\partial C_i$ for all $i$, we may use the $(1+\eta/2)$-bilipschitz equivalence $F_i:\mathcal{C}_i \to C_i$ to define $U_\e:M\to \R$ by $U_\e(x) = (w_\e\circ F_i)(x)$ for $x \in \bar{\mathcal{C}_i}$, so that $U_\e \in H^1(M)$ and by Lemma \ref{lema:var}
	\[E_{\e/(1+\eta/2)}(U_\e,\mathcal{C}_i) \leq (1+\eta/2)^{n+2} E_\e(w_\e,C_i).\]
Thus
	\[E_{\e/(1+\eta/2)}(w_\e, M) \leq (1+\eta/2)^{n+2}\left(E_\e(u,\Omega) + \frac{W(\psi(\delta/\e))}{\e}|\Omega| + 2\sigma(1+\eta)\beta(\Omega)\right).\]
If we prove that $u \in H^1(\Omega) \mapsto U_\e \in H^1(M)$ defines a continuous odd map then the monotonicity of the $\Z/2\Z$ index and the inequality above give us
	\[ c_{\e/(1+\eta/2)}(p,M) \leq (1+\eta/2)^{n+2}\left( c_\e(p,\Omega) + \frac{W(\psi(\delta/\e))}{\e}|\Omega| + 2\sigma(1+\eta)\beta(\Omega)\right) \]
for all $p \in \N$. Consequently
	\[ \ell_p(M) \leq (1+\eta/2)^{n+2}\left( \ell_p(\Omega) + 2\sigma(1+\eta)\beta(\Omega)\right)\]
and 
	\begin{align*}
		\limsup_p p^{-\frac{1}{n+1}}\ell_p(M) & \leq (1+\eta/2)^{n+2}\limsup_p p^{-\frac{1}{n+1}}\ell_p(\Omega)& \\
		& = (1+\eta/2)^{n+2}\tau(n)|\Omega|^{\frac{n}{n+1}} \\
		&\leq (1+\eta)^{2n+3}\tau(n)\vol(M,g)^{\frac{n}{n+1}}.
	\end{align*}
The continuity of the truncation and gluing construction used above is a consequence of the following lemma.

\begin{lemma}
Fix $\e,\delta>0$ and consider the decomposition $\{\mathcal{C}_i\}$ and the bilipschitz diffeomorphisms $F_i:\mathcal{C}_i \to C_i \subset \R^{n+1}$ described above, and the function $v_{\delta,\e}$ defined in \eqref{eq:vie}. The map $\Psi:W^{1,2}(\Omega) \to W^{1,2}(M)$,
	\[(\Psi u)(x) := \max\{ -(v_{\delta,\e}\circ F_i)(x), \min\{ (v_{\delta,\e}\circ F_i)(x),(u\circ F_i)(x) \}  \}, \quad \mbox{for} \quad x \in \mathcal{C}_i\]
is odd and continuous.
\end{lemma}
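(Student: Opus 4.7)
The plan is to factor $\Psi$ into a short composition of elementary continuous operations on Sobolev spaces, one factor per piece $\mathcal{C}_i$, and then glue using the fact that the clamp forces trace zero along every $\partial\mathcal{C}_i$. Oddness is immediate from the pointwise identity $\max\{-v,\min\{v,-t\}\}=-\max\{-v,\min\{v,t\}\}$, valid whenever $v\geq 0$ and $t\in\R$ (quick case check on whether $|t|\leq v$ or $|t|>v$); since $v_{\delta,\e}\geq 0$ on $\Omega$, applying this on each $\mathcal{C}_i$ yields $\Psi(-u)=-\Psi(u)$.

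For continuity, fix $i\in\{1,\ldots,N\}$ and write the map $u\mapsto(\Psi u)|_{\mathcal{C}_i}$ as the composition
\[
W^{1,2}(\Omega)\xrightarrow{\ \mathrm{restr.}\ }W^{1,2}(C_i)\xrightarrow{\ \cdot\,\circ\,F_i\ }W^{1,2}(\mathcal{C}_i)\xrightarrow{\ T\ }W^{1,2}(\mathcal{C}_i),
\]
where $T(w):=\max\{-v_i,\min\{v_i,w\}\}$ and $v_i:=v_{\delta,\e}\circ F_i\in W^{1,\infty}(\mathcal{C}_i)$. Restriction is clearly bounded. Pullback by the bilipschitz $F_i$ is continuous: the chain rule $\nabla(u\circ F_i)=(DF_i)^{T}(\nabla u\circ F_i)$ a.e., combined with the $L^\infty$ bounds on $DF_i$ and $DF_i^{-1}$ and the change-of-variables formula, yields $\|u\circ F_i\|_{W^{1,2}(\mathcal{C}_i)}\leq C(F_i)\|u\|_{W^{1,2}(C_i)}$. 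The truncation $T$ with fixed Lipschitz obstacle $v_i$ is continuous on $W^{1,2}$ by the classical lattice-operation results for Sobolev functions: writing $T(w)=(w\wedge v_i)\vee(-v_i)$ and using $a\wedge b=b-(b-a)^{+}$, continuity reduces to the continuity of $w\mapsto w^{+}$, which is Stampacchia's lemma (the point being that $\nabla w^{+}=\nabla w\cdot\chi_{\{w>0\}}$ almost everywhere, so $L^2$-convergence of $\nabla w_k$ and convergence in measure of $\chi_{\{w_k>0\}}$ combine via dominated convergence).

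To glue the $N$ pieces into an element of $W^{1,2}(M)$, note that on $\partial C_i$ the distance function $d$ vanishes and $\psi_\e(0)=\psi(0)=0$, so $v_{\delta,\e}$ and hence $v_i$ vanish on $\partial\mathcal{C}_i$. The truncation then forces $(\Psi u)|_{\mathcal{C}_i}$ to have zero trace on $\partial\mathcal{C}_i$. Whenever two pieces $\mathcal{C}_i$ and $\mathcal{C}_j$ share part of their boundary, both traces are zero and hence agree, so by the standard gluing criterion for $W^{1,2}$ functions across a Lipschitz interface,
\[
\|\Psi u\|_{W^{1,2}(M)}^{2}=\sum_{i=1}^{N}\|(\Psi u)|_{\mathcal{C}_i}\|_{W^{1,2}(\mathcal{C}_i)}^{2}.
\]
Continuity of $\Psi$ follows by summing the continuities from the previous paragraph over the finite index $i$. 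The only nontrivial ingredient is the continuity of the truncation operator on $W^{1,2}$ (the step $T$ above); the remaining pieces --- restriction to a subdomain, bilipschitz change of coordinates, and gluing along Lipschitz boundaries with matching zero traces --- are standard facts about $W^{1,2}$.
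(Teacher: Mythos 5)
Your proof is correct and follows essentially the same route as the paper's: continuity of the lattice truncation (max/min with the obstacle $v_{\delta,\e}$) in $H^1$, composed with the bilipschitz pullbacks by $F_i$, and gluing using that the clamped function vanishes on the boundaries of the pieces. The only differences are cosmetic --- the paper truncates on $\bigcup_i C_i$ first and then pulls back, phrasing the gluing via $W^{1,2}_0\bigl(\bigcup_i C_i\bigr)=\oplus_i W^{1,2}_0(C_i)$ and extension by zero, whereas you pull back first and glue via matching zero traces.
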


\begin{proof}
By the continuity of the maximum and minimum functions in $H^1$ we see that
	\[(\bar \Psi u)(x) := \max\{-v_{\delta,\e}(x),\min\{v_{\delta,\e}(x),u(x)\}\}, \quad x \in {\textstyle\bigcup_i C_i}\]
defines a continuous map $\bar\Psi:H^1(\Omega) \to H^1_0(\bigcup_i C_i)$.

On the other hand, since each $F_i$ is a bilipschitz diffeomorphism, the map
	\[\Phi_i:H^{1}_0(C_i) \ni u \mapsto u \circ F_i \in H^{1}_0(\mathcal{C}_i)\]
is also continuous. By extending $\Phi_i u$ to $0$ outside $\mathcal{C}_i$ we get a continuous map into $H^{1}(M)$. Using also that $W^{1,2}_0(\bigcup_i C_i) = \oplus_i W^{1,2}_0(C_i)$ we can put
	\[\Phi:{\textstyle W^{1,2}_0\left(\bigcup_i C_i\right) \to W^{1,2}(M),\quad \Phi u =  \sum_{i=1}^N} \Phi_i(u|_{C_i}).\]
Then $\Phi$ is also continuous and the claimed result follows by noting that $\Psi = \Phi \circ \bar\Psi $.
\end{proof}

\section{Density of Limit interfaces}\label{sec:density}

The proofs of the main theorems in \cite{IrieMarNev} and \cite{MarNevSon} rely on the Weyl law for the volume spectrum together with some perturbation arguments. In our context, we may replace the former by our main theorem, the Weyl law for the phase transition spectrum. While the perturbation arguments remain roughly unchanged, some adaptations are needed. Before talking about limit interfaces we will indicate the changes needed in \cite{IrieMarNev} and \cite{MarNevSon} in order to obtain phase transition based proofs of:

\begin{teo} \label{thm:den}(from \cite{IrieMarNev})
Let $M^{n+1}$ be a closed manifold of dimension $3\leq n+1 \leq 7$. For a $C^\infty$-generic Riemannian metric $g$ on $M$, the union of all closed, smooth, embedded minimal hypersurfaces in $(M,g)$ is dense. 
\end{teo}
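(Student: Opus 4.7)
The strategy is to adapt Irie-Marques-Neves \cite{IrieMarNev} to the Allen-Cahn setting, using Theorem \ref{thm:weyl_} in place of the Weyl law for the volume spectrum and Corollary \ref{cor:min} in place of the Almgren-Pitts construction. Fix a countable basis $\{U_k\}_{k \in \N}$ for the topology of $M$, and for each open $U \subset M$ let $\mathcal{M}_U$ denote the set of smooth Riemannian metrics on $M$ admitting a closed, smooth, embedded, nondegenerate minimal hypersurface meeting $U$. By Baire category it suffices to show each $\mathcal{M}_{U_k}$ is open and $C^\infty$-dense. Openness is a standard consequence of the implicit function theorem, since nondegenerate minimal hypersurfaces persist under $C^\infty$ perturbations of the metric.

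For density, fix $g_0$ and $U = U_k$, and choose $\chi \in C^\infty(M)$ with $\chi \geq 0$, $\chi \not\equiv 0$, and $\spt(\chi) \subset U$. Consider the conformal family $g_t := (1 + t\chi) g_0$, $t \in [0, 1]$. The pointwise inequality $g_t \geq g_0$ yields $E_\e(u, g_t) \geq E_\e(u, g_0)$ for every $u \in H^1(M)$ and $\e > 0$, so $f_p(t) := \ell_p(M, g_t)$ is monotone nondecreasing in $t$ for each $p \in \N$. By Theorem \ref{thm:weyl_},
\[
\lim_{p \to \infty} p^{-1/(n+1)} f_p(t) = \tau(n) \vol(M, g_t)^{n/(n+1)}, \qquad t \in [0,1],
\]
which is strictly increasing in $t$ since $\vol(M, g_1) > \vol(M, g_0)$. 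Consequently, for every sufficiently large $p$, the monotone function $f_p$ is non-constant on $[0,1]$, and its right derivative $f_p'(t_*^+)$ is positive on a subset of $t_* \in (0,1)$ of positive Lebesgue measure.

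At such a $t_*$, apply Corollary \ref{cor:min} along a sequence $t_k \downarrow t_*$ to produce stationary integral varifolds $V_k$ in $(M, g_{t_k})$ of mass $f_p(t_k)$ and Morse index at most $p$, supported on smooth embedded minimal hypersurfaces $\Sigma_k$ (smoothness being automatic since $3 \leq n+1 \leq 7$). Sharp's compactness theorem, in its version for varying metrics, yields a subsequential varifold limit $V_*$ in $(M, g_{t_*})$ supported on a smooth embedded minimal hypersurface $\Sigma_*$, and continuity of $\ell_p$ in the metric (Lemma \ref{lem:cont}) gives $\|V_*\|_{g_{t_*}}(M) = f_p(t_*)$. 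A direct first-variation calculation along the conformal family, combined with monotonicity, identifies $f_p'(t_*^+)$ with a positive multiple of $\int \chi \, d\|V_*\|$, forcing $\Sigma_* \cap \{\chi > 0\} \neq \emptyset$ and hence $\Sigma_* \cap U \neq \emptyset$. A further small perturbation of $g_{t_*}$ to a nearby bumpy metric (via White's theorem) produces the desired element of $\mathcal{M}_U$ arbitrarily $C^\infty$-close to $g_0$.

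The principal technical obstacle is the last paragraph: identifying $f_p'(t_*^+)$ as the first variation of mass of $V_*$ under the metric deformation. In the Almgren-Pitts setting IMN achieve this via a sweepout perturbation. In the Allen-Cahn context one must instead track solutions $u_{\e_k}$ realizing $c_{\e_k}(p, g_{t_k})$ as both $\e_k \downarrow 0$ and $t_k \to t_*$, extract a limit varifold using the Convergence Theorem, and invoke a parametric form of Sharp's compactness. The continuity of $\ell_p(M, \cdot)$ in the $C^\infty$ topology on metrics (Lemma \ref{lem:cont}) is an essential auxiliary input, as is the coincidence $\underline\ell_p = \overline\ell_p$ in the dimension range $3 \leq n+1 \leq 7$.
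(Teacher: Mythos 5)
Your outline diverges from the paper's proof in a way that leaves the crucial step unproved. The paper follows Irie--Marques--Neves' countability argument: first replace $g_0$ by a nearby bumpy metric $g'$ (White), note that any limit interface realizing $\ell_p(M,g'(t))$ which avoids $U$ is minimal for the \emph{fixed} metric $g'$ (the deformation is supported in $U$), so by Sharp's compactness its mass lies in a fixed countable set; then continuity of $t\mapsto \ell_p(M,g'(t))$ (Lemma \ref{lem:cont}) would force each $\ell_p$ to be constant in $t$, contradicting the Weyl law (Theorem \ref{thm:weyl_}) since the volume strictly increases. This needs no differentiation of $\ell_p$ and no first-variation identity. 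Your route instead hinges on the claim that $f_p'(t_*^+)$ equals a positive multiple of $\int \chi\, d\|V_*\|$ for the varifold $V_*$ you extract by compactness along $t_k\downarrow t_*$. That identification is not a ``direct first-variation calculation'': knowing $\|V_k\|(M)=f_p(t_k)$ and $V_k\to V_*$ says nothing about the derivative of $f_p$, since the realizing varifolds at different parameters are unrelated. Establishing such a statement is the content of Lemma 2 of Marques--Neves--Song, whose proof requires two-sided comparisons of the min-max values at nearby metrics using common test families (here: common sets $A\in\mathcal{F}_p$ at the level of $c_\e(p)$, followed by a limit in $\e$); you flag this as ``the principal technical obstacle'' but do not supply it, so the core of your density argument is missing. (Also, passing from monotonicity of $f_p$ to ``positive right derivative on a set of positive measure'' needs absolute continuity, i.e. the Lipschitz bound of Corollary \ref{Lipschitz}, not monotonicity alone.)

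A second gap is the final step. Having found a (possibly degenerate) minimal hypersurface $\Sigma_*$ in $(M,g_{t_*})$ meeting $U$, you propose to ``perturb to a nearby bumpy metric via White's theorem.'' A nearby bumpy metric has only nondegenerate minimal hypersurfaces, but nothing guarantees that any of them intersects $U$; bumpiness does not preserve the particular hypersurface you found. The correct tool, used in the paper, is Proposition 2.3 of \cite{IrieMarNev} (cf. Proposition \ref{prop:def} here): a small perturbation of the metric that keeps $\Sigma_*$ minimal while rendering it nondegenerate, which places the perturbed metric in $\mathcal{M}(U)$. With that substitution, and with either the MNS-type derivative lemma proved in the Allen--Cahn setting or, more simply, the paper's countability argument in place of your derivative step, the proof closes; as written, it does not.
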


\begin{teo} \label{thm:equi}(from \cite{MarNevSon})
Let $M^{n+1}$ be a closed manifold of dimension $n+1$, with $3 \leq n + 1 \leq 7$. Then for a $C^\infty$-generic Riemannian metric $g$ on $M$, there exists a sequence $\{\Sigma_j\}_{j\in \N}$ of closed, smooth, embedded, connected minimal hypersurfaces that is equidistributed in $M$: for any $f \in C^\infty(M)$ one has
$$\lim_{q\to\infty}\frac{1}{\sum_{j=1}^q \operatorname{vol_g}(\Sigma_j)} \sum_{j=1}^{q} \int_{\Sigma_j}f \ d\Sigma_j = \frac{1}{\operatorname{vol}_g M} \int_M f dM.$$
Even more, for any symmetric $(0, 2)$-tensor $h$ on $M$, one has
$$\lim_{q\to\infty}\frac{1}{\sum_{j=1}^q \operatorname{vol_g}(\Sigma_j)} \sum_{j=1}^{q} \int_{\Sigma_j}\operatorname{Tr}_{\Sigma_j} (h) \ d\Sigma_j = \frac{1}{\operatorname{vol}_g M} \int_M \frac{n \operatorname{Tr}_M h}{n+1} dM.$$

\end{teo}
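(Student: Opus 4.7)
The plan is to adapt the strategy of Marques-Neves-Song \cite{MarNevSon} to the phase transition setting, replacing the volume spectrum $\{\omega_p\}$ by $\{\ell_p\}$ and invoking Theorem \ref{thm:weyl} in place of the volume-spectrum Weyl law.

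I would begin with the first variation of the Allen-Cahn energy with respect to a smooth family of metrics $g_t = g + t h$. A direct computation gives, for any fixed $u \in H^1(M)$,
\[
\frac{d}{dt}\bigg|_{t=0} E_\e(u; g_t) = \int_M \Bigl(\frac{\e|\nabla u|^2}{2} + \frac{W(u)}{\e}\Bigr) \frac{\tr_g h}{2}\,dM - \frac{\e}{2}\int_M h(\nabla u, \nabla u)\,dM.
\]
If $u_\e$ realizes $c_\e(p,g)$ and converges, as $\e \downarrow 0$, to a multiplicity-one limit interface $\Sigma_p$ with unit normal $\nu$, then by the Convergence Theorem and equipartition of energy both $\e|\nabla u_\e|^2/2$ and $W(u_\e)/\e$ concentrate on $\Sigma_p$ with mass $\sigma$, while $\e\,h(\nabla u_\e, \nabla u_\e) \rightharpoonup 2\sigma\,h(\nu,\nu)\,\mathcal{H}^n|_{\Sigma_p}$. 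Using the pointwise identity $\operatorname{Tr}_{\Sigma_p} h = \tr_g h - h(\nu,\nu)$, the two terms combine to give
\[
\lim_{\e\downarrow 0}\frac{1}{2\sigma}\frac{d}{dt}\bigg|_{t=0} E_\e(u_\e; g_t) = \frac12 \int_{\Sigma_p} \operatorname{Tr}_{\Sigma_p} h\,d\Sigma_p,
\]
which is precisely the first variation of the area of $\Sigma_p$ under the deformation induced by $h$.

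For a generic metric $g$, every limit interface realizing $\ell_p(M,g)$ should be a smooth, non-degenerate, embedded minimal hypersurface with all multiplicities equal to one. For $n+1=3$ this follows from Chodosh-Mantoulidis \cite{ChodoshMantoulidis} combined with the Bumpy Metrics Theorem of B.~White \cite{WhiteBumpy}; for $4\le n+1\le 7$ one combines bumpiness with the upper semi-continuity of the Morse index for multiplicity-one limit interfaces from \cite{ChodoshMantoulidis} and an index/nullity counting argument to rule out multiplicity generically. Together with the variation formula above, a Lipschitz-subdifferential argument analogous to \cite[\S 2]{MarNevSon} then shows that $t \mapsto \ell_p(M,g_t)$ is Lipschitz and differentiable at a.e.\ $t$, with
\[
\frac{d}{dt}\bigg|_{t=0}\ell_p(M,g_t) = \frac12 \int_{\Sigma_p} \operatorname{Tr}_{\Sigma_p} h\,d\Sigma_p.
\]

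Finally I would let $p\to\infty$ via Theorem \ref{thm:weyl}. The Weyl law gives
\[
p^{-1/(n+1)} \ell_p(M, g_t) \longrightarrow \tau(n)\vol(M, g_t)^{n/(n+1)},
\]
whose right-hand side has a classical $t$-derivative at $0$ proportional to $\int_M \tr_g h\,dM$. A Borel-Cantelli / convex-analysis argument in the spirit of \cite[\S 5]{MarNevSon}, applied to a countable dense family of perturbations $h$, combines the two identifications of $\frac{d}{dt}\ell_p$ and yields, for a Baire-generic set of metrics $g$, a sequence $\{\Sigma_j\}$ of limit interfaces satisfying
\[
\lim_{q\to\infty}\frac{1}{\sum_{j=1}^q \vol_g(\Sigma_j)} \sum_{j=1}^q \int_{\Sigma_j} \operatorname{Tr}_{\Sigma_j} h \,d\Sigma_j = \frac{n}{n+1}\cdot\frac{1}{\vol_g M}\int_M \tr_g h\, dM,
\]
of which the scalar equidistribution identity is the specialization $h = f\cdot g$. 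I expect the main obstacle to be the generic differentiability step: showing that the Lipschitz subdifferential of $\ell_p$ at a generic $g$ is a singleton identified with the area variation of a single limit interface requires packaging together Allen-Cahn compactness from the Convergence Theorem, the index and nullity semi-continuities from \cite{Gaspar} and \cite{ChodoshMantoulidis}, and a bumpy-metric rigidity in the spirit of \cite{WhiteBumpy}.
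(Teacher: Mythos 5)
Your overall strategy (perturb the metric, differentiate the min-max values, and let the Weyl law force equidistribution) is the same as the paper's, which simply runs the scheme of \cite{MarNevSon} with the phase transition spectrum: Lemma 1 of \cite{MarNevSon} is replaced by the Lipschitz estimate of Corollary \ref{Lipschitz} (itself a consequence of the Gromov--Guth bounds of \cite{GasparGuaraco} and Lemma \ref{lem:cont}), Lemma 2 is replaced by Proposition \ref{prop:ach}, and Lemmas 3 and 4 of \cite{MarNevSon} are then used verbatim, with Theorem \ref{thm:weyl_} in place of the volume-spectrum Weyl law. However, your proposal has a genuine gap at the point where you require that, for a generic metric, every limit interface realizing $\ell_p(M,g)$ have all multiplicities equal to one, and in particular your claim that for $4 \le n+1 \le 7$ this follows from bumpiness plus ``an index/nullity counting argument to rule out multiplicity generically.'' No such result is available: the multiplicity one property for Allen--Cahn limit interfaces is only known in ambient dimension $3$ by \cite{ChodoshMantoulidis}, and in dimensions $4 \le n+1 \le 7$ it is open (this is exactly why Theorem \ref{thm:denallencahn} in this paper imposes extra hypotheses outside dimension $3$). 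Index bounds from \cite{Gaspar} and bumpiness do not exclude, say, a stable or degenerate-free component occurring with multiplicity $m_j \ge 2$ in the limit varifold, so this step of your argument fails as stated.

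The good news is that multiplicity one is not needed. In \cite{MarNevSon} the a.e.\ derivative of the width along a conformal family is identified with $\tfrac12 \int \operatorname{Tr}_{\Sigma}h$ computed against the \emph{weighted} stationary integral varifold $\sum_j m_j \Sigma_j$ that achieves the min-max value, and the multiplicities are simply carried through the averaging argument; the equidistributed sequence $\{\Sigma_j\}$ in the conclusion lists connected components with repetition according to multiplicity. In the phase transition setting Proposition \ref{prop:ach} provides exactly this weighted achievement (limit interfaces $\sum_j m_j \Sigma_j$ with $\sum_j \operatorname{Ind}(\Sigma_j) \le p$), and your concentration computation for $\e\,h(\nabla u_\e,\nabla u_\e)$ remains valid with the multiplicities included, so your first-variation identity should be stated against the limit varifold rather than a single multiplicity-one interface. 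With that correction, and with the one-sided comparison/a.e.-differentiability bookkeeping done as in Lemmas 3 and 4 of \cite{MarNevSon} (using Corollary \ref{Lipschitz} for the Lipschitz property and \cite{Sharp} plus bumpy metrics \cite{WhiteBumpy} only to ensure countability of the possible values), your argument reduces to the paper's.
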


Moreover, combining the argument of \cite{IrieMarNev} with the construction of Pacard-Ritor\'e \cite{PacardRitore} and with the multiplicity one of the interfaces in 3-dimensional manifolds of Chodosh-Mantoulidis \cite{ChodoshMantoulidis}, we obtain

\begin{teo} \label{thm:denallencahn}
Let $M^{n+1}$ be a closed manifold of dimension $(n+1)$, such that 
\begin{enumerate}
\item $n+1=3$, or 
\item $4\leq n+1 \leq 7$ and $M$ contains only separating minimal hypersurfaces e.g. $H_{n}(M,\Z_2)=0.$
\end{enumerate} Then, for a $C^\infty$-generic Riemannian metric $g$ on $M$, the union of all closed, smooth, embedded separating limit interfaces in $(M,g)$ is dense. 
\end{teo}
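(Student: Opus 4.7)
The plan is to adapt the Irie-Marques-Neves density argument \cite{IrieMarNev} by substituting our Weyl law for the phase transition spectrum (Theorem \ref{thm:weyl_}) for their Weyl law for the volume spectrum, and to use Chodosh-Mantoulidis \cite{ChodoshMantoulidis} together with the Pacard-Ritoré construction \cite{PacardRitore} to handle the separating condition.

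Fix a countable basis $\{U_j\}_{j \in \N}$ of open sets in $M$ and let $\mathcal{G}_j$ denote the set of smooth metrics on $M$ admitting a closed, smooth, embedded separating limit interface meeting $U_j$. By Baire it suffices to show each $\mathcal{G}_j$ is both $C^\infty$-open and $C^\infty$-dense; the generic set of Theorem \ref{thm:denallencahn} is then $\bigcap_j \mathcal{G}_j$. For \emph{openness}, given $g_0 \in \mathcal{G}_j$ with separating limit interface $\Sigma_0$ meeting $U_j$, apply the Local Bumpy Metrics Theorem via an arbitrarily small perturbation supported away from $\Sigma_0$ to make $\Sigma_0$ nondegenerate, preserving the separating property by Chodosh-Mantoulidis in case (1) and automatically by hypothesis in case (2). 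The Pacard-Ritoré construction then produces, for every metric $g$ in a $C^\infty$-neighborhood of $g_0$, a family of Allen-Cahn solutions whose interfaces converge smoothly to a nearby separating minimal hypersurface still meeting $U_j$, placing that neighborhood in $\mathcal{G}_j$.

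For \emph{density}, argue by contradiction. Suppose $g_0$ has a $C^\infty$-neighborhood $V$ disjoint from $\mathcal{G}_j$. After a preliminary small perturbation, the Bumpy Metrics Theorem of White combined with Chodosh-Mantoulidis in case (1) (respectively the topological hypothesis in case (2)) forces every limit interface of every $g \in V$ to be separating, and hence --- by the contradiction hypothesis --- to avoid $U_j$. Pick a smooth $\chi \geq 0$, $\chi \not\equiv 0$, $\spt(\chi) \subset U_j$, and form the conformal family $g_t = (1 + t\chi) g_0 \in V$ for small $t \geq 0$. If one can show $\ell_p(M, g_t) = \ell_p(M, g_0)$ for every $p \in \N$ and every small $t \geq 0$, then Theorem \ref{thm:weyl_} yields
\[
\tau(n)\vol(M, g_t)^{\frac{n}{n+1}} = \lim_{p\to\infty} p^{-\frac{1}{n+1}} \ell_p(g_t) = \lim_{p\to\infty} p^{-\frac{1}{n+1}} \ell_p(g_0) = \tau(n)\vol(M, g_0)^{\frac{n}{n+1}},
\]
contradicting $\vol(M, g_t) > \vol(M, g_0)$ for $t > 0$.

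The main obstacle is establishing the above constancy of $\ell_p$. The inequality $\ell_p(g_t) \geq \ell_p(g_0)$ is a straightforward monotonicity: since $\chi \geq 0$ and $n \geq 2$, a direct computation with $g_t = (1+t\chi)g_0$ yields $E_\e^{g_t}(u) \geq E_\e^{g_0}(u)$ pointwise in $u \in H^1(M)$, and since the admissible families $\mathcal{F}_p$ depend only on the topology of $H^1(M)$ this descends to $c_\e(p, g_t) \geq c_\e(p, g_0)$, hence $\ell_p(g_t) \geq \ell_p(g_0)$. The reverse inequality is the hard part: unlike in the Almgren-Pitts setting, Allen-Cahn critical functions are globally defined on $M$, so the metric perturbation on $U_j$ affects them even though their interfaces avoid $U_j$. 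The strategy is to use, for a near-minimizer $u_{\e, t}$ of $c_\e(p, g_t)$, that the Convergence Theorem combined with the standing assumption produces a limit interface $\Gamma_t$ avoiding $U_j$, so that $u_{\e,t}$ is exponentially close to $\pm 1$ on $\spt(\chi)$ for small $\e$; together with Lemma \ref{lema:var} this should yield near-optimal test functions for $c_\e(p, g_0)$ with energies agreeing up to $o(1)$ as $\e \to 0$, giving $c_\e(p, g_t) \leq c_\e(p, g_0) + o_\e(1)$. Making this precise in a way that applies uniformly across the full admissible family --- not merely at critical points --- is the principal technical hurdle, and requires a careful Allen-Cahn analogue of the sweepout-replacement argument in \cite{IrieMarNev}.
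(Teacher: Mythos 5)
Your contradiction argument for density hinges on the assertion that, after a preliminary perturbation, ``every limit interface of every $g\in V$'' is separating, and this is exactly the point that fails in case (1). The multiplicity-one (hence separating) conclusion of \cite{ChodoshMantoulidis} requires the limit interface to be nondegenerate; bumpiness of the background metric is a generic but not an open condition, so the conformal deformations $g_t=(1+t\chi)g_0$ supported in $U_j$ are in general not bumpy, and for $t>0$ the value $\ell_p(M,g_t)$ may be attained by one-sided or non-separating interfaces with multiplicity that do intersect $U_j$. Your standing hypothesis (no \emph{separating} limit interface meets $U_j$) then gives no control, and the step ``the attaining interface avoids $U_j$'' collapses. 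This is precisely the difficulty the paper's proof is built to overcome: it introduces the Local Bumpy Metrics and Local Manifold Structure Theorems (Theorems \ref{bumpymetrics} and \ref{StructureTheorem}), perturbs the deformation path by Smale's transversality theorem \cite{Smale} so that it is transverse to the Fredholm projection $\Pi$, and combines Sard's theorem with the Lipschitz dependence of $\ell_p$ on the metric (Corollary \ref{Lipschitz}) to find a regular parameter $t_0$ at which the metric is bumpy (it equals the bumpy $g_0$ outside $U_j$, and components meeting $U_j$ are nondegenerate because $\gamma'(t_0)$ is a regular value of $\Pi$) and $\ell_p(M,g'(t_0))$ is attained by an interface meeting $U_j$; only then can the sheet convergence theorem be applied to conclude multiplicity one and the separating property. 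None of this appears in your proposal.

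Second, your route to the contradiction---proving $\ell_p(g_t)=\ell_p(g_0)$ by modifying near-optimal sweepouts using exponential closeness of ``near-minimizers'' to $\pm1$ on $\spt(\chi)$---is left unproved (you call it the principal technical hurdle yourself), and it is also not the mechanism actually needed: elements of $A\in\mathcal{F}_p$ are arbitrary $H^1$ functions with no interface structure, so there is no uniform replacement argument to run over the whole family, and no Allen--Cahn surgery is required. The argument in the paper, following \cite{IrieMarNev}, is a countability argument: under the contradiction hypothesis the limit interfaces attaining $\ell_p(g_t)$ avoid $U_j$, hence are minimal hypersurfaces for the fixed bumpy metric $g_0$, so the attained values lie in a countable set by Sharp's compactness \cite{Sharp} together with Proposition \ref{prop:ach}; combined with continuity and Lipschitz continuity of the spectrum in the metric (Lemma \ref{lem:cont}, Corollary \ref{Lipschitz}) and the Weyl law (Theorem \ref{thm:weyl_}), this forces a positive-measure set of values ($f_p(\mathcal{A})$ in the paper) into a countable set, a contradiction. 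Finally, your openness step does not work as stated: your $\mathcal{G}_j$ has no nondegeneracy built in, so a degenerate interface may disappear under perturbation, and a perturbation supported \emph{away} from $\Sigma_0$ cannot make $\Sigma_0$ nondegenerate, since its Jacobi operator depends only on the metric along $\Sigma_0$. The paper instead defines the relevant sets using nondegenerate separating minimal hypersurfaces, obtains openness from the inverse function theorem (the perturbed hypersurface is a normal graph, hence still two-sided and separating), and then uses the Pacard--Ritor\'e construction \cite{PacardRitore} via Proposition \ref{prop:def} to realize these hypersurfaces as limit interfaces.
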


First, we need to prove that the volume spectrum depends continuously on the metric:

\begin{lemma}\label{lem:cont}
The $p$-th liminf (resp. limsup) min-max value for the energy $\underline\ell_p(M,g)$ (resp. $\overline\ell_p(M,g))$ depends continuously on the metric $g$, with respect to the $C^0$ topology.
\end{lemma}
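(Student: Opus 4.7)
The plan is to apply Lemma \ref{lema:var} with the identity map treated as a diffeomorphism $F=\mathrm{id}:(M,g)\to(M,g')$ between two $C^0$-close metrics on $M$. If $\|g-g'\|_{C^0}$ is small, I can find $\eta=\eta(g,g')\to 0$ as $g'\to g$ such that $(1+\eta)^{-2}g\leq g'\leq(1+\eta)^2 g$ as quadratic forms at each point, and hence both $\|DF\|_\infty$ and $\|DF^{-1}\|_\infty$ are bounded by $1+\eta$. The first observation I would record is that the class $\mathcal{F}_p(M)$ of admissible families is intrinsic to $M$: $C^0$-equivalent metrics induce equivalent $H^1$-norms, so the underlying topological vector space, its compact symmetric subsets, and their cohomological index are all unchanged. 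In particular, the same $A$ serves as a $p$-admissible family for both metrics.

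Next, I would apply Lemma \ref{lema:var} with $\lambda:=\|DF\|_\infty\in[1,1+\eta]$ to obtain
\[E^g_{\e/\lambda}(u)\;\leq\;\lambda\,\|DF^{-1}\|_\infty^{n+1}\,E^{g'}_\e(u)\;\leq\;(1+\eta)^{n+2}\,E^{g'}_\e(u)\]
for every $u\in H^1(M)$. Substituting $\e':=\e/\lambda$ (so $\e=\lambda\e'$) and then taking the supremum over $u\in A$ followed by the infimum over $A\in\mathcal{F}_p(M)$ yields the pointwise inequality
\[c_{\e'}(p,(M,g))\;\leq\;(1+\eta)^{n+2}\,c_{\lambda\e'}(p,(M,g'))\qquad\text{for all }\e'>0.\]

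Finally I would pass to the limit $\e'\to 0^+$. Since $\lambda>0$ is a fixed constant, $\lambda\e'\to 0^+$ as well, so taking $\liminf$ (resp. $\limsup$) on both sides and dividing by $2\sigma$ produces
\[\underline\ell_p(M,g)\;\leq\;(1+\eta)^{n+2}\,\underline\ell_p(M,g'),\]
and the analogous inequality for $\overline\ell_p$. Swapping the roles of $g$ and $g'$ gives the reverse inequality with the same constant, hence
\[(1+\eta)^{-(n+2)}\,\underline\ell_p(M,g)\;\leq\;\underline\ell_p(M,g')\;\leq\;(1+\eta)^{n+2}\,\underline\ell_p(M,g),\]
and similarly for the upper spectrum. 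Letting $g'\to g$ in $C^0$ forces $\eta\to 0$, giving continuity.

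The only delicate point -- really just careful bookkeeping -- is tracking how the rescaling factor $\lambda$ enters the $\e$-parameter of the energy on the right-hand side and ensuring that the subsequences realizing the $\liminf$/$\limsup$ interact correctly with the substitution $\e'=\e/\lambda$. Once the order of operations is pinned down, continuity drops out directly from Lemma \ref{lema:var} and the intrinsic nature of the cohomological index.
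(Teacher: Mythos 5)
Your proposal is correct and follows essentially the same route as the paper: both arguments rest on the quasi-isometry bound $(1+\eta)^{-2}g \leq g' \leq (1+\eta)^2 g$, the observation that $\mathcal{F}_p(M)$ and the index are unchanged under $C^0$-equivalent metrics, and the comparison estimate of Lemma \ref{lema:var} applied (in your case via $F=\mathrm{id}$) to get $c_{\e'}(p,g) \leq (1+\eta)^{n+2} c_{\lambda\e'}(p,g')$ before letting $\e'\to 0$ and symmetrizing. The only cosmetic difference is that the paper works with a near-optimal family $A$ (introducing an $\e$ slack) and a sequence $g_i \to g$, whereas you compare the min-max values directly; the substance is identical.
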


\begin{proof}
Assume $g_i$ is a sequence of smooth Riemannian metrics converging in the $C^0$ topology to $g$. For all $i$ write
	\[\lambda_i = \max\left\{\left( \sup_{v \neq 0} \frac{g_i(v,v)}{g(v,v)} \right)^{1/2},\left(\sup_{v \neq 0} \frac{g(v,v)}{g_i(v,v)} \right)^{1/2}\right\}\]
so that $\lambda_i^{-2}g\leq g_i \leq \lambda_i^2 g$ and $\lambda_i \to 1$, by the convergence $g_i \to g_0$ in $C^0$. Note that this implies that the $H^1$ norm induced by each $g_i$ is equivalent to the one induced by $g$. Given $\e>0$ choose a compact and symmetric subset $A \subset H^{1}(M)$ such that 
	\[\sup_{u \in A} E_\e(u,g) \leq c_\e(p,g) + \e.\]
where $E_\e(\cdot, g)$ is the Allen-Cahn energy calculated with respect to the metric $g$. Proceeding as in the proof of Lemma \ref{lema:var}, we get
	\[E_{\e/\lambda_i}(u,g_i) \leq \lambda_i^{n+2} E_\e(u,g)\]
for all $H^1$ functions $u$ on $M$. Moreover $A \in \mathcal{F}_p(M,g_i)$ (as a subset of $H^1(M,g_i)$) and thus
	\[	c_{\e/\lambda_i}(p,g_i) \leq \sup_{u \in A} E_{\e/\lambda_i }(u,g_i) \leq \lambda_i^{n+2} \sup_{u \in A} E_\e(u,g) \leq \lambda_i^{n+2}(c_\e(p,g)+\e).\]
Hence
	\[\limsup_i \underline\ell_p(M,g_i) \leq \underline\ell_p(M,g)\]
and
	\[\limsup_i \overline\ell_p(M,g) \leq \overline\ell_p(M,g).\]
Similarly, we may prove that $\liminf_i \underline\ell_p(M,g_i) \geq \underline \ell_p(M,g)$ and $\liminf_i\overline\ell_p(M,g_i) \geq \overline\ell_p(M,g)$.
\end{proof}

In fact, we can proof the following equivalent to Lemma 1 from \cite{MarNevSon}.

\begin{cor}
\label{Lipschitz}
Let $\tilde g$ be a $C^2$ Riemannian metric on $M$, and let $C_1<C_2$ be positive constants. Then there is $K=K(\tilde g, C_1, C_2)>0$ such that $$|p^\frac{1}{n+1} \underline \ell_p(M,g')-p^\frac{1}{n+1} \underline \ell_p(M,g)| \leq K \cdot |g-g'|_{\tilde g},$$ for any $g,g'\in \{h\in \Gamma_2; C_1 \tilde g \leq h \leq C_2 \tilde g\}$ and any $p\in \N$.
\end{cor}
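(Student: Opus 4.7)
The plan is to upgrade the $C^0$-continuity from Lemma \ref{lem:cont} into a quantitative Lipschitz estimate, by tracking the rate at which the comparison factor $\lambda$ approaches $1$. Fix $g, g' \in \mathcal{M} := \{h \in \Gamma_2 : C_1 \tilde g \leq h \leq C_2 \tilde g\}$ and write $\delta = |g - g'|_{\tilde g}$. Since every metric in $\mathcal{M}$ is pointwise comparable to $\tilde g$, the bound $|(g' - g)(v,v)| \leq \delta \cdot |v|_{\tilde g}^2 \leq C_1^{-1}\delta \cdot g(v,v)$ shows that one may choose $\lambda = \lambda(g,g')>0$ with $\lambda^{-2}g \leq g' \leq \lambda^2 g$ and $\lambda^{n+2} \leq 1 + K_0 \delta$, where $K_0 = K_0(\tilde g, C_1, C_2)$ can be written down explicitly in terms of $C_1$.

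Next, I would redo the calculation from the proofs of Lemma \ref{lema:var} and Lemma \ref{lem:cont} with the identity map as the diffeomorphism between $(M,g)$ and $(M,g')$. This gives
\[E_{\e/\lambda}(u, g') \leq \lambda^{n+2}\, E_{\e}(u, g), \qquad u \in H^1(M).\]
Because the cohomological index and the min-max classes $\mathcal{F}_p(M)$ depend only on the topology of $H^1(M)$ (unchanged within $\mathcal{M}$), this yields $c_{\e/\lambda}(p, g') \leq \lambda^{n+2} c_\e(p, g)$ for every $p \in \N$. Passing to $\liminf_{\e \to 0^+}$ and dividing by $2\sigma$,
\[\underline\ell_p(M, g') \leq \lambda^{n+2}\,\underline\ell_p(M, g) \leq (1 + K_0 \delta)\,\underline\ell_p(M, g).\]

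The final ingredient is a \emph{uniform} upper bound: there should exist $C_\ast = C_\ast(\tilde g, C_1, C_2) > 0$ with $p^{-1/(n+1)}\underline\ell_p(M, g) \leq C_\ast$ for every $g \in \mathcal{M}$ and every $p \in \N$. Granting this,
\[p^{-\frac{1}{n+1}}\bigl(\underline\ell_p(M, g') - \underline\ell_p(M, g)\bigr) \leq K_0 \delta \cdot p^{-\frac{1}{n+1}}\underline\ell_p(M, g) \leq K_0 C_\ast\, \delta,\]
and swapping $g \leftrightarrow g'$ supplies the reverse inequality, giving the corollary with $K = K_0 C_\ast$. (The displayed exponent should be read as $-\tfrac{1}{n+1}$, matching Lemma 1 of \cite{MarNevSon}; the argument above does not yield a uniform bound on the unnormalized quantities.) The main obstacle is establishing $C_\ast$ uniformly: one must revisit the proof of Theorem \ref{teo:bounds} in \cite{GasparGuaraco} and verify that the $p$-sweepouts constructed there — built from a fixed triangulation of $M$ together with families of small geodesic balls — can be assembled with estimates depending only on the comparison constants $C_1, C_2$ and on $\tilde g$, not on the individual metric $g$. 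Since the construction involves only finitely many geometric data all controlled by the two-sided bound $C_1 \tilde g \leq g \leq C_2 \tilde g$ (diameters, volumes of simplices, energies of the truncated heteroclinic profile), this is expected to go through, but it is the step that requires the most vigilance.
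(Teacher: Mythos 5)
Your argument follows the same route as the paper, which simply imports Lemma 1 of \cite{MarNevSon}: the pointwise comparison $\lambda^{-2}g \le g' \le \lambda^{2}g$ with $\lambda^{n+2} \le 1 + K_0|g-g'|_{\tilde g}$, the energy comparison of Lemma \ref{lema:var} applied to the identity map (exactly as in Lemma \ref{lem:cont}) to get $c_{\e/\lambda}(p,g') \le \lambda^{n+2}c_\e(p,g)$ and hence $\underline\ell_p(M,g') \le (1+K_0|g-g'|_{\tilde g})\,\underline\ell_p(M,g)$, and the sublinear bounds of Theorem \ref{teo:bounds}; your reading of the exponent as $-\tfrac{1}{n+1}$ is also the intended one (it matches Lemma 1 of \cite{MarNevSon}, and the application later in the paper only needs Lipschitz continuity of $t \mapsto \ell_p(M,g'(t))$ for fixed $p$). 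The one place you stop short is the uniform bound $p^{-\frac{1}{n+1}}\underline\ell_p(M,g) \le C_\ast$ over the pinched set, which you flag as the delicate step and propose to obtain by re-running the sweepout construction of \cite{GasparGuaraco} with constants depending only on $C_1, C_2, \tilde g$. That detour is unnecessary: the very comparison you already use, applied to the pair $(g,\tilde g)$ instead of $(g,g')$, gives $\underline\ell_p(M,g) \le \lambda_0^{n+2}\,\underline\ell_p(M,\tilde g)$ with $\lambda_0 = \max\{C_2^{1/2}, C_1^{-1/2}\}$ depending only on $C_1, C_2$, and then a single application of Theorem \ref{teo:bounds} to the fixed metric $\tilde g$ (or to a fixed smooth metric uniformly comparable to $\tilde g$, if one is concerned that $\tilde g$ is merely $C^2$) yields $C_\ast = C_\ast(\tilde g, C_1, C_2)$. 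With that substitution your proof is complete and coincides with the paper's, which cites the proof of Lemma 1 in \cite{MarNevSon} with Theorem \ref{teo:bounds} and Lemma \ref{lem:cont} as the two substituted ingredients.
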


\begin{proof}
The proof is identical to Lemma 1 from \cite{MarNevSon}, it uses the Gromov-Guth sublinear bounds, which in the phase transitions context were proved by the authors on \cite{GasparGuaraco}; and the proof of the equivalent of Lemma \ref{lem:cont}, which was proved by Irie-Marques-Neves in \cite{IrieMarNev}. By substituting this two results in the proof we obtain the Lipschitz continuity. 
\end{proof}

The next result tells us that $\overline\ell_p(M)$ and $\underline\ell_p(M)$ are achieved by limit interfaces with Morse index at most $p$, and its proof follows directly from \cite{GasparGuaraco} and \cite{Gaspar}.

\begin{prop} \label{prop:ach}
Suppose $3 \leq n+1 \leq 7$. Then for each $p \in \N$ there exist finite disjoint collections $\{\Gamma_1,\ldots, \Gamma_U\}$ and $\{\Sigma_1,\ldots, \Sigma_L\}$ of closed, smooth, embedded minimal hypersurfaces in $M$, and positive integers $\{\overline m_1,\ldots, \overline m_U\}$ and $\{\underline m_1,\ldots, \underline m_L\}$ such that
	\[\overline\ell_p(M,g) = \sum_{j=1}^U \overline m_j \vol_g(\Gamma_j), \quad \underline\ell_p(M,g) = \sum_{j=1}^L \underline m_j \vol_g(\Sigma_j)\]
and
	\[\sum_{j=1}^U \operatorname{Ind}(\Gamma_j)\leq p, \quad \sum_{j=1}^L \operatorname{Ind}(\Sigma_j) \leq p.\]
Furthermore $\sum_{j=1}^U \overline m_j \Gamma_j$ and $\sum_{j=1}^L \underline m_j \Sigma_j$ are limit interfaces, that is the limits of the varifolds associated to sequences of solutions to the Allen-Cahn equation with Morse index at most $p$ and parameter $\e$ converging to $0$.
\end{prop}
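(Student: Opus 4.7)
The plan is to combine the three main ingredients already developed in the paper and its references: the existence of min-max critical points at the level $c_\e(p,M)$ with controlled index (Theorem~\ref{teo:exist}), the Convergence Theorem for sequences of Allen-Cahn solutions with bounded energy, $L^\infty$-norm, and Morse index (from \cite{Guaraco}), and the lower semicontinuity of the Morse index along such sequences (from \cite{Gaspar}). The argument for $\overline\ell_p$ and $\underline\ell_p$ is identical up to the choice of a limsup- or liminf-realizing subsequence.

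First, I would handle $\overline\ell_p(M,g)$. By definition, I select $\e_k \downarrow 0$ with $c_{\e_k}(p,M) \to 2\sigma \overline\ell_p(M,g)$. Theorem~\ref{teo:bounds} gives a uniform upper bound $c_\e(p,M) \leq C p^{1/(n+1)}$, while $E_\e(0) = \vol(M,g)W(0)/\e \to +\infty$ as $\e \downarrow 0$, so for all $k$ sufficiently large the strict inequality $c_{\e_k}(p,M) < E_{\e_k}(0)$ holds. Applying Theorem~\ref{teo:exist}(2), I obtain a critical point $u_k \in K_{c_{\e_k}(p,M)}(p)$ with $|u_k|\leq 1$.

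Next, the sequence $\{u_k\}$ has uniformly bounded energy, $\sup_M |u_k|\leq 1$, and $\operatorname{Ind}(u_k)\leq p$, so the Convergence Theorem applies. Passing to a subsequence, the associated varifolds converge to a stationary integral varifold $V$ supported on a minimal hypersurface $\Gamma$. Because $3\leq n+1\leq 7$, the possible singular set of dimension at most $n-7$ is empty, so $\Gamma$ decomposes as a finite disjoint union of closed, smooth, embedded minimal hypersurfaces $\Gamma_1,\ldots,\Gamma_U$, and there are positive integers $\overline m_1,\ldots,\overline m_U$ with
\[
\lim_k E_{\e_k}(u_k) \;=\; 2\sigma \sum_{j=1}^U \overline m_j\, \vol_g(\Gamma_j).
\]
Dividing by $2\sigma$ yields the desired identity $\overline\ell_p(M,g) = \sum_j \overline m_j \vol_g(\Gamma_j)$, and by construction $\sum_j \overline m_j \Gamma_j$ is a limit interface.

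Finally, the index estimate follows from the lower semicontinuity of the index proved in \cite{Gaspar}: for all $k$ sufficiently large,
\[
\sum_{j=1}^U \operatorname{Ind}(\Gamma_j) \;=\; \operatorname{Ind}(\Gamma) \;\leq\; \operatorname{Ind}(u_k) \;\leq\; p.
\]
The argument for $\underline\ell_p(M,g)$ is word-for-word the same, with the limsup-realizing sequence replaced by a liminf-realizing one; this produces the second collection $\{\Sigma_j\}$ with multiplicities $\underline m_j$ and the same index bound. There is no real obstacle here; the only point requiring minor care is verifying the hypothesis $c_{\e_k}(p,M) < E_{\e_k}(0)$ needed to invoke Theorem~\ref{teo:exist}(2), which is immediate from the uniform bound of Theorem~\ref{teo:bounds} together with the blow-up of $E_\e(0)$.
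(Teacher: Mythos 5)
Your proposal is correct and follows exactly the route the paper intends: the paper gives no detailed argument, stating only that the proposition ``follows directly from \cite{GasparGuaraco} and \cite{Gaspar}'', and your chain --- limsup/liminf-realizing sequence $\e_k$, the bound $c_{\e_k}(p,M)<E_{\e_k}(0)$ from Theorem \ref{teo:bounds} plus blow-up of $E_\e(0)$, Theorem \ref{teo:exist}(2), the Convergence Theorem with empty singular set for $3\leq n+1\leq 7$, and the index lower semicontinuity of \cite{Gaspar} --- is precisely that argument. No gaps worth flagging beyond the (harmless) implicit use of the lower bound $\liminf_\e c_\e(p)>0$ to ensure the limit interface is nonempty.
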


\begin{obs}
Different from Proposition 2.2 in \cite{IrieMarNev} we do not need to use Sharp's Compactness Theorem \cite{Sharp} in the proof of Proposition \ref{prop:ach}. In Marques-Neves' setting this happens because the $p$-widths are defined using cohomological classes of maps into the space of $n$-cycles modulo $\Z_2$ while Almgren-Pitts Regularity theory works with homotopy classes. In our case, the phase transition spectrum and also the existence theorems of \cite{GasparGuaraco} may be described in terms of cohomological families while the convergence to a smooth limit interface is independent of these constructions.
\end{obs}

Finally we need a version of Proposition 2.3 in \cite{IrieMarNev} which preserves separating limit interfaces. This is the content of the next result.

\begin{prop} \label{prop:def}
Let $\Gamma$ be a closed, smooth, embedded and separating minimal hypersurface in $(M^{n+1},g)$. Then, there exists a sequence of metrics $g_i$ converging to $g$ in the smooth topology such that $\Gamma$ is a nondegenerate limit interface for each $(M^{n+1},g_i)$.
\end{prop}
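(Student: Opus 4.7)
The plan is to (i) perturb $g$ to a sequence $g_i \to g$ in $C^\infty$ so that $\Gamma$ remains minimal but becomes nondegenerate, and then (ii) invoke the Pacard--Ritor\'e construction \cite{PacardRitore} to realize $\Gamma$ as a multiplicity-one limit interface in each $(M,g_i)$.

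For step (i), I would work in Fermi coordinates $(x,t)$ in a tubular neighborhood of $\Gamma$, in which $g=dt^2+g_t$, with $g_0$ the induced metric on $\Gamma$ and $-\tfrac{1}{2}\partial_t g_t|_{t=0}$ the second fundamental form $A$. The potential $|A|^2+\Ric(\nu,\nu)$ of the Jacobi operator $L_\Gamma=-\Delta_\Gamma-(|A|^2+\Ric(\nu,\nu))$ also depends on $\partial_t^2 g_t|_{t=0}$ through $R_{tt}$. Hence, perturbing $g$ by a tangential symmetric $2$-tensor of the form $h = s\,\chi(t)\,t^2\,\bar h$, where $\chi$ is a smooth cutoff supported in the tubular neighborhood, $\bar h$ is a fixed tangent $(0,2)$-tensor, and $s\in\R$ is small, leaves $g_0$ and $\partial_t g_t|_{t=0}$ unchanged, so that $\Gamma$ remains minimal in $g+h$; but it shifts the Jacobi potential by a term linear in $s$ determined by $\bar h|_\Gamma$. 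Choosing $\bar h$ so that the induced shift is not identically zero on $\Gamma$ and using analytic perturbation theory for the Schr\"odinger operator $L_\Gamma^{(g+sh)}$, the set of $s\in\R$ for which $0$ lies in its spectrum is discrete; any sequence $s_i\to 0$ avoiding this discrete set produces the required metrics $g_i := g + s_i h \to g$ in $C^\infty$, in which $\Gamma$ is minimal and nondegenerate. Alternatively, one may directly invoke the local version of the Bumpy Metrics Theorem discussed in the introduction, applied to the space of metrics on $M$ for which $\Gamma$ is a minimal hypersurface.

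For step (ii), since $\Gamma$ is separating and nondegenerate in $(M,g_i)$, the construction of Pacard--Ritor\'e yields, for every sufficiently small $\e>0$, a solution $u_{i,\e}$ of the Allen--Cahn equation on $(M,g_i)$ whose nodal set is a normal graph over $\Gamma$ converging smoothly to $\Gamma$ as $\e\downarrow 0$, with
\[
E_{\e}^{g_i}(u_{i,\e}) \longrightarrow 2\sigma\,\vol_{g_i}(\Gamma).
\]
By the Convergence Theorem, the varifolds associated to $u_{i,\e}$ converge to the multiplicity-one varifold supported on $\Gamma$; thus $\Gamma$ is a limit interface in $(M,g_i)$, and since we already arranged it to be nondegenerate, the proposition follows.

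The main difficulty is in step (i): one must shift the Jacobi operator without destroying the minimality of a \emph{fixed} hypersurface $\Gamma$. This is exactly why the perturbation is chosen to vanish to first order along $\Gamma$ (the factor $t^2$), so that the full $1$-jet of the metric along $\Gamma$, which controls the mean curvature, is preserved, while $\partial_t^2 g_t|_{t=0}$ --- and hence $\Ric(\nu,\nu)$ --- is free to be modified.
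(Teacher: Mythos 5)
Your proposal is correct and takes essentially the same route as the paper, whose proof simply cites Proposition 2.3 of \cite{IrieMarNev} for metrics $g_i\to g$ in which $\Gamma$ stays minimal but becomes nondegenerate, and then applies Theorem 4.1 of \cite{PacardRitore} to the separating, nondegenerate $\Gamma$, exactly as in your step (ii). Your step (i) is in effect a reproof of the cited Irie--Marques--Neves perturbation lemma (they use a conformal factor vanishing to second order along $\Gamma$ where you use the tangential $t^2$-tensor); the only point to tighten is to choose the induced shift of the Jacobi potential with a fixed sign, so that unique continuation excludes an identically vanishing eigenvalue branch, rather than asking only that the shift be ``not identically zero''.
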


\begin{proof}
By Proposition 2.3 \cite{IrieMarNev} we know there exists a sequence of metrics $g_i$ such that $\Gamma$ is a nondegerate minimal hypersurface on every $(M^{n+1},g_i)$. Since $\Gamma$ is also separating, Theorem 4.1 of \cite{PacardRitore} gives, for every $g_i$, the existence of $\e_0$ such that for each $\e\in(0,\e_0)$ there is a solution to $\Delta_{g_i}u_\e-W'(u_\e)$, having $\Gamma$ as a limit interface, as $\e\to 0$.
\end{proof}

\vspace{20pt}

{\bf Density and Equidistribution of Minimal Hypersurfaces.} 

\begin{proof}[Proof of Theorem \ref{thm:den}]
Similarly to \cite{IrieMarNev}, we will show that given an open set $U \subset M$ the space $\mathcal{M}(U)$ of all smooth metrics on $M$ for which there exists an embedded nondegenerate minimal hypersurface intersecting $U$ is open and dense, with respect to the $C^\infty$ topology.

Let $g \in \mathcal{M}(U)$ as above. The openness of $\mathcal{M}(U)$ follows from the Inverse Theorem Function Theorem applied to the Jacobi operator of $\Gamma$, or from White Structure Theorem \cite{WhiteBumpy} as argued in Proposition 3.1 of \cite{IrieMarNev}. 

To see that $\mathcal{M}(U)$ is dense one can proceed as in the proof of Proposition 3.1 \cite{IrieMarNev} substituting the continuity of the area functional spectrum, i.e. Lemma 2.1\cite{IrieMarNev} by Lemma \ref{lem:cont}, Propositon 2.2 \cite{IrieMarNev} by Propostion \ref{prop:ach} and restricting the set of all possible areas of min-max stationary varifolds to the possible areas of limit interfaces, all of on a fixed bumpy metric. The Weyl law for the volume spectrum Theorem \cite{LioMarNev} is then replaced by the Weyl law for the phase transition spectrum, Theorem \ref{thm:weyl_} and the same contradiction argument shows that we can find an arbitrarily small deformation $g'$ of a bumpy metric in $U$ so that some limit interface intersects $U$. Then, the deformation of Proposition 2.3 \cite{IrieMarNev} concludes the density of $\mathcal{M}(U)$.
\end{proof}

\begin{proof}[Proof of Theorem \ref{thm:equi}]
We indicate how each one of the lemmas in \cite{MarNevSon} is affected when one wants to use the Weyl law for the phase transition spectrum. Lemma 1 \cite{MarNevSon} is based on the Guth-Gromov Bounds from Marques-Neves \cite{MarquesNevesInfinite} and Lemma 2.1 \cite{IrieMarNev}, which should then be replaced by Theorem 3.2 \cite{GasparGuaraco} and Lemma \ref{lem:cont}, respectively. Lemma 2 \cite{MarNevSon} uses only Proposition 2.2 \cite{IrieMarNev}, which can be replaced by our Proposition \ref{prop:ach}. Lemma 3 and 4 may then be used verbatim. The main result in \cite{MarNevSon} is then consequence of Lemmas 1, 2, 3 an 4, \cite{MarNevSon}. 
\end{proof}

\

{\bf Density of Limit Interfaces.} 
\begin{proof}[Proof of Theorem \ref{thm:denallencahn}]

Differently from above, let $\mathcal{M}_2(U)$ be all smooth metrics on $M$ for which there exists a closed, two-sided, separating, smooth and embedded nondegenerate minimal hypersurface (not necessarily connected) intersecting $U$. We show that this set is open and dense, with respect to the $C^\infty$ topology.

The openness of $\mathcal{M}_2(U)$ follows from a similar argument as in Theorem \ref{thm:den} since the hypersurface given by the application of the Inverse Function Theorem is presented as a normal graph. Therefore, it is also two-sided, separating and non-degenerated. Then Theorem 4.1 of \cite{PacardRitore} implies that it is also a limit interface.

To see that $\mathcal{M}_2(U)$ is dense, the idea is again to proceed as in the proof of Proposition 3.1 \cite{IrieMarNev}, nonetheless, it is necessary to know that the limit interfaces given by the Allen-Cahn min-max, Proposition \ref{prop:ach}, are also separating. This is directly the case if one assumes (1) $n+1=3$ and $\operatorname{Ric}(M)>0$, as a consequence of the multiplicity one property recently shown by Chodosh-Mantoulidis \cite{ChodoshMantoulidis} or if (2) $3\leq n+1\leq 7$ and $M$ contains only separating minimal hypersurfaces as is the case when $H_n(M,\Z_2)=0$. The proof is then the same as in Theorem \ref{thm:den} except in the last step, where we can replace Proposition 2.3 \cite{IrieMarNev} by Proposition \ref{prop:def} in the deformation argument on $U$.

In order to show the result for $n+1=3$ in a generic set of metrics, the fact that $\mathcal{M}_2(U)$ is dense must be argued differently. The reason for this is that the deformed metric on $U$ is not bumpy in general. Therefore, Chodosh-Mantoulidis Sheet Convergence Theorem \cite{ChodoshMantoulidis} does not rule out the possibility of having only unstable one-sided interfaces, or non-separating interfaces with multiplicity,  intersecting the set $U$. 

To fix this problem we consider the following local version of B. White's Bumpy Metrics Theorem (which was announced by White in \cite{WhiteBumpy}). Roughly speaking, the result will be used to produce a deformation of $U$ that preserves bumpiness allowing us to apply the Sheet Convergence Theorem \cite{ChodoshMantoulidis} to conclude that the interfaces are separating minimal hypersurfaces. We present the statements now but we postpone its proofs to the end of this section.

\begin{teo}[Local Bumpy Metrics Theorem]\label{bumpymetrics} Let $g$ be a $C^q$ Riemannian metric on $M$ and $U\subset M$ an open set. Define the Banach space $$\mathcal{S}(U)=\{\gamma : \gamma= 0 \textit{ on } M\setminus U \},$$ where $\gamma$ varies on the space of $C^q$ sections of symmetric bilinear forms on $M$, and its open subset $$\Gamma_g(U) = \{\gamma \in \mathcal{S}(U) : g + \gamma \text{ is a metric on } M \}.$$ Then, the set of $\gamma \in \Gamma_g(U)$ such that any component of a $(g + \gamma)$-minimal hypersurface intersecting $U$ is non degenerate in $(M,g+\gamma)$, is a generic subset of $\Gamma_g (U)$.
\end{teo}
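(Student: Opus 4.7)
The plan is to adapt White's parametric transversality argument from the global Bumpy Metrics Theorem \cite{WhiteBumpy, White1}, modifying it so that all ambient metric perturbations are required to be supported in $U$. The overall structure---set up a Banach manifold of pairs \emph{(metric perturbation, minimal hypersurface)}, prove that the projection to the space of metrics is Fredholm of index $0$, and invoke Sard-Smale---is identical to the original; only the surjectivity step requires rewriting to accommodate the support constraint on $\gamma$.

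First, for each $k \in \N$ and $\alpha \in (0,1)$ I would consider
\[\mathcal{N}_k = \{(\gamma,\Sigma) : \gamma \in \Gamma_g(U),\ \Sigma\ \text{is a closed embedded minimal hypersurface in } (M,g+\gamma)\},\]
restricted to pairs for which at least one connected component of $\Sigma$ meets $U$ and has Morse index at most $k$. Parametrizing each such component as a normal $C^{2,\alpha}$-graph over a fixed reference hypersurface, the minimal surface equation $H_{g+\gamma} = 0$ becomes a quasilinear elliptic PDE whose linearization in the graph variable is the Jacobi operator $L_\Sigma$. An implicit function theorem argument as in \cite{White1} shows that $\mathcal{N}_k$ carries the structure of a $C^{q-2}$ Banach manifold and that the natural projection $\pi : \mathcal{N}_k \to \Gamma_g(U)$ is Fredholm of index $0$. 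For $q$ sufficiently large, the Sard-Smale theorem then supplies a comeagre set $\mathcal{R}_k \subset \Gamma_g(U)$ of regular values of $\pi$. A regular value $\gamma$ is precisely one for which every component of a $(g+\gamma)$-minimal hypersurface meeting $U$ of index $\leq k$ is nondegenerate; taking $\bigcap_{k \in \N} \mathcal{R}_k$ yields a generic subset of $\Gamma_g(U)$, and the standard approximation argument of \cite{WhiteBumpy} upgrades the conclusion from $C^q$ to $C^\infty$ perturbations.

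The main obstacle is verifying, at each $(\gamma_0, \Sigma) \in \mathcal{N}_k$, that the linearization $D\pi$ has dense range. This reduces to the following localized unique continuation statement: if $\varphi$ is a Jacobi field on a component $\Sigma_0$ of $\Sigma$ meeting $U$ such that
\[\int_{\Sigma_0} \langle \dot H(\gamma), \varphi \rangle\, dV_{\Sigma_0} = 0 \quad \text{for every } \gamma \in T_{\gamma_0} \Gamma_g(U) = \mathcal{S}(U),\]
then $\varphi \equiv 0$ on $\Sigma_0$. Computing $\dot H(\gamma)$ following \cite[\S 2]{White1} produces a first-order expression in $\gamma$ involving its covariant derivatives and its contraction with the second fundamental form of $\Sigma_0$. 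Since $\Sigma_0 \cap U$ is a nonempty open subset of $\Sigma_0$, and $\gamma$ ranges over all $C^q$ symmetric $(0,2)$-tensors supported in $U$, one can localize the pairing by choosing bump-function supported test tensors $\gamma$; a direct computation in a local orthonormal frame, choosing separately tensors that are normal-normal, tangent-normal and tangent-tangent, forces $\varphi \equiv 0$ on $\Sigma_0 \cap U$. Aronszajn's unique continuation principle, applied to the elliptic equation $L_{\Sigma_0} \varphi = 0$, then extends the vanishing to all of $\Sigma_0$, closing the surjectivity step and hence the proof.
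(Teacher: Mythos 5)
Your proposal is correct and follows essentially the same route as the paper: both localize White's Banach-manifold/Fredholm-index-zero/Sard--Smale argument by admitting only metric perturbations supported in $U$, and both obtain the crucial surjectivity (White's Hypothesis (C)) from unique continuation for the Jacobi operator --- the paper uses conformal perturbations $(1+sf)(g+\gamma_0)$ with $\operatorname{supp} f \subset U$ together with the fact that a nontrivial Jacobi field cannot vanish identically on $\Sigma_0 \cap U$, which is exactly the contrapositive of your Aronszajn step. The remaining differences (your stratification by Morse index versus the paper's enumeration of diffeomorphism types of $\Sigma$, and your general bump-supported test tensors versus purely conformal ones) are cosmetic.
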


This is a consequence of following theorem, where $[w]$ represents the equivalence class of a $C^{j,\alpha}$ embedding $w : \Sigma \to M$, modulo diffeomorphisms of $\Sigma$, with $q\geq j+1\geq 3.$

\begin{teo}[Local Manifold Structure Theorem]\label{StructureTheorem}  Following the same notation as in the last Theorem, let $\Sigma$ be a smooth $n$-dimensional closed Riemannian manifold and $\Gamma$ an open subset of $\Gamma_g(U)$. Denote by $\mathcal{M}_g(\Sigma, U)$ the set of ordered pairs $(\gamma, [w])$ where $\gamma \in \Gamma$,  $w\in C^{j,\alpha}(\Sigma,M)$ is a $(g+\gamma)$-minimal embedding and $w(\Sigma)\cap U\neq \emptyset$. 

Then $\mathcal{M}_g(\Sigma, U)$ is a separable $C^{q-j}$ Banach manifold modelled on $\Gamma$, and the map $$\Pi: \mathcal{M}_g(\Sigma, U) \to \Gamma$$
$$\Pi(\gamma,[w])=\gamma $$
is a $C^{q-j}$ Fredholm map with Fredholm index 0. Moreover, the kernel of $D\Pi(\gamma,[w])$ has dimension equal to the nullity of $[w]$ with respect to $g+\gamma$, in particular $(\gamma,[w])$ is a critical point for $\Pi$ if and only if the embedding $w$ admits non trivial Jacobi fields with respect to $g+\gamma$.
\end{teo}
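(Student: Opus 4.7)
The plan is to adapt B.\ White's proof of the manifold structure theorem to this local setting. The only new analytic input needed is unique continuation for the Jacobi operator, which compensates for the fact that $\dot\gamma$ is constrained to have support in $U$.

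First, I would recast the problem as the zero locus of a smooth section. Fix a reference $C^{j,\alpha}$ embedding $w_0:\Sigma \to M$ with $w_0(\Sigma)\cap U \neq \emptyset$, and parametrize nearby embeddings (modulo diffeomorphisms of $\Sigma$) by normal sections $\phi \in C^{j,\alpha}(\Sigma,\nu w_0)$ via the graph map $w_\phi(x)=\exp^g_{w_0(x)}(\phi(x)\nu(x))$. Define
\[
F(\phi,\gamma) := H_{g+\gamma}(w_\phi),
\]
the mean curvature of $w_\phi(\Sigma)$ measured in $g+\gamma$, identified with a section on $\Sigma$ via $\nu$. Then, in a neighborhood of $(0,\gamma_0)$, the set $\mathcal{M}_g(\Sigma,U)$ is precisely $F^{-1}(0)$; its $C^{q-j}$ regularity as a map between the appropriate Banach spaces is standard.

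Second, I would compute the linearization at a zero $(w,\gamma)$. Differentiation in $\phi$ yields the Jacobi operator $L_w\phi=\Delta_\Sigma\phi+(|A|^2+\operatorname{Ric}_{g+\gamma}(\nu,\nu))\phi$, which is elliptic, self-adjoint and Fredholm of index $0$ from $C^{j,\alpha}(\Sigma)$ to $C^{j-2,\alpha}(\Sigma)$. Differentiation in $\gamma$ gives a first-order linear operator $M_w(\dot\gamma)$, whose explicit form (the derivative of the mean-curvature formula in the metric) shows that the integral pairing $\int_\Sigma \psi\, M_w(\dot\gamma)\,d\mathcal{H}^n$ depends only on the values of $\dot\gamma$ (and its first derivatives transverse to $w(\Sigma)$) along $w(\Sigma)\cap \operatorname{supp}(\dot\gamma)$.

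Third---the key step---I would prove that $DF(w,\gamma)$ is surjective. Suppose $\psi \in L^2(\Sigma)$ lies in the $L^2$-annihilator of the image. Testing against $\phi$-variations gives $L_w\psi=0$, so $\psi$ is a Jacobi field and in particular smooth. Testing against $\dot\gamma \in \mathcal{S}(U)$ and varying $\dot\gamma$ over all sections supported in $U$, the localization of the pairing forces $\psi \equiv 0$ on the nonempty open set $w^{-1}(U)\subset \Sigma$. Since $\Sigma$ is connected and $L_w$ is a second-order linear elliptic operator, Aronszajn's unique continuation theorem yields $\psi \equiv 0$ on $\Sigma$. Hence the image of $DF$ is closed (since $L_w$ has closed range) and dense, thus everything, so $DF$ is surjective.

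Fourth, the implicit function theorem for Banach manifolds gives $\mathcal{M}_g(\Sigma,U)$ the structure of a $C^{q-j}$ Banach manifold locally modelled on $\ker DF$, and the projection $\Pi(\phi,\gamma)=\gamma$ has linearization $D\Pi(w,\gamma):(\phi,\dot\gamma)\mapsto \dot\gamma$ restricted to $\ker DF$. Its kernel is $\{(\phi,0):L_w\phi=0\}$, i.e.\ the space of Jacobi fields of $w$ with respect to $g+\gamma$, giving the claimed identification of critical points with degenerate minimal embeddings. For Fredholmness, the short exact sequence $0\to \ker L_w \to \ker DF \xrightarrow{D\Pi} \Gamma \to \operatorname{coker} L_w \to 0$ (where the last arrow sends $\dot\gamma$ to $[M_w(\dot\gamma)]$) shows $\operatorname{ind}\Pi = \dim\ker L_w - \dim\operatorname{coker}L_w = \operatorname{ind} L_w = 0$, using that $L_w$ is self-adjoint.

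I expect the main obstacle to be the clean implementation of step three: writing $M_w(\dot\gamma)$ explicitly enough to see that pairing against $\psi$ localizes to $w(\Sigma)\cap \operatorname{supp}(\dot\gamma)$, and then verifying that the freedom in choosing $\dot\gamma$ (both its tangential and normal components to $w(\Sigma)$ in $U$) is rich enough to force $\psi=0$ pointwise on $w^{-1}(U)$. Once this localization is established, Aronszajn's theorem takes over and the remainder of the argument is a direct transcription of the proof in \cite{White1}.
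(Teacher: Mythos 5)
Your proposal is correct and follows essentially the same route as the paper: both adapt White's structure theorem, and the one genuinely new ingredient is identical, namely that metric variations supported only in $U$ still suffice because a Jacobi field cannot vanish identically on the open set $w^{-1}(U)$ by unique continuation for the Jacobi (Schr\"odinger) operator. The ``main obstacle'' you flag in your third step is resolved exactly as in the paper, by testing against conformal variations $\dot\gamma = f\,(g+\gamma)$ with $\operatorname{supp} f \subset U$ and using White's computation of the resulting pairing with the Jacobi field.
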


\

\noindent\textbf{Remark.}\textit{
We emphasize that $\Gamma$ is not a set of metrics, but an open set of sections of symmetric bilinear forms $\gamma$, with $\operatorname{supp}\gamma \subset \overline{U}$ such that $g+\gamma$ is a metric on $M$.}

\

These results allows us argue following ideas from Lemma 2 of \cite{MarNevSon}. Let $\tilde g$ be a  metric on $M$ and $\mathcal{V}$ an open set of metrics containing $\tilde g$. Fix $g_0 \in \mathcal{V}$ a bumpy metric on $M$. Remember that, as mentioned before, there exists a generic set of such metrics. 

Define $\mathcal{M}_{g_0}(U)=\cup_i \mathcal{M}_{g_0}(\Sigma_i,U)$, where $\{\Sigma_i\}_i$  enumerates all the diffeomorphism types of closed manifolds of dimension $n$. By Theorem \ref{StructureTheorem}, $\mathcal{M}_{g_0}(U)$ is a separable $C^{q-2}$ Banach manifold and $\Pi: \mathcal{M}_{g_0}(U) \to \Gamma_{g_0}(U)$ is a Fredholm map of index 0.

Let $g(t)=g_0+\gamma(t)$ be a \textit{smooth} deformation of the metric $g_0=g(0)$ on the set $U$, such that $g(t) \in \mathcal{V}$ and $\vol(M,g(t)) > \vol(M,g_0)$ for all non-zero $t\in I=[0,1]$, as constructed in Proposition 3.1 of \cite{IrieMarNev}.  

Since the metric $g_0$ is bumpy, it follows from Theorem \ref{StructureTheorem} that $D\Pi$ has no kernel on the points of $\mathcal{M}_{g_0}(U)$ with first coordinate $\gamma=0$, i.e. before deforming the metric $g_0$. Since $\Pi$ is a Fredholm map of index 0, this implies that the derivative of $\Pi$ is onto. In particular, the maps $\gamma$ and $\Pi$ are transversal at $\gamma(0)=0$. Moreover, by Smale's Transversality Theorem (Theorem 3.1 \cite{Smale}) there exists $\gamma'$, a perturbation of $\gamma$ arbitrarily small on the $C^\infty$ topology, such that $\gamma'$ and $\Pi$ are transversal maps,  $\gamma'(0)=\gamma(0)=0$ and $J =\Pi^{-1}(\gamma(I))$ is a smooth embedded curve on $M_{g_0}(U)$. In particular, we can assume that $g'(t)=g_0+\gamma'(t) \in \mathcal{V}$, for all $t\in I$ and that $\vol(M,g'(1)) > \vol(M,g_0)$. Note also that $\gamma(t) \in \Gamma_{g_0}(U)$ by construction.

Let $\mathcal{A}$ be the set of regular values of the map $\pi: J \to I$, given by $\pi=(\gamma')^{-1} \circ \Pi |_{J}$. $\mathcal{A}$ is a set of full measure by Sard's Theorem. Therefore, reparametrizing if necessary, we can assume that $1$ is a regular value for $\pi$. Since $\vol(M,g'(1)) > \vol(M,g_0)$ by the Weyl Law for the Phase Transition Spectrum, Theorem \ref{thm:weyl_}, there must be a $p \in \N$, for which $\ell_p(M,g'(1)) > \ell_p(M,g_0)$. Since the function $f_p: t \in I \mapsto \ell_p(M,g'(t))$ is Lipschitz by Corollary \ref{Lipschitz}, it follows that $f_p(I\setminus \mathcal{A})$ has null measure. On the other hand, by continuity $[\ell_p(M,g(0)),\ell_p(M,g'(1))] \subset f_p(I)$. This implies that $f_p(\mathcal{A})$ has positive measure.

We will now reach a contradiction. Assume that for all $t\in \mathcal{A}$, limit interfaces never intersect $U$. On one hand, we have that $g'(t)|_{M\setminus U}=g_0|_{M\setminus U}$ is a fixed bumpy metric, for all $t \in I$.  Then, $f_p(\mathcal{A})$ must be contained in the set of possible values for $\ell_k(M,g_0)$, $k\in\N$. On the other hand, this is a countable set for bumpy metrics, by Sharp Compactness Theorem \cite{Sharp} and Proposition \ref{prop:ach}.

Therefore, there exists $t_0\in \mathcal{A}$ such that $\ell_p(M,g'(t_0))$ is attained by a limit interface intersecting $U$. Notice that the metric $g'(t_0) \in \mathcal{V}$ is bumpy, since for minimal hypersurfaces that do not intersect $U$ the relevant ambient metric coincides with $g_0$, which we chose to be bumpy. Additionally, components intersecting $U$ are also nondegenerate since $t_0$ is a regular value of $\pi$ and therefore $\gamma(t_0)$ is a regular value of $\Pi$. By Chodosh-Mantoulidis Sheet Convergence Theorem it follows that the phase transition spectrum is attained by limit interfaces with multiplicity one on this metric and therefore separating.

\end{proof}

We now present the proof of the Local version of Structure Theorem by B. White.

\begin{proof}
We adapt the proof of B. White's Manifold Structure Theorem (Theorem 2.1, \cite{White1}) to our setting.  As in Theorem 2.1 \cite{White1}, we can parametrize a small open neighborhood of a given $[w_0]$ by an equivalence class of sections $u:\Sigma \to V$, modulo diffeomorphisms of $\Sigma$, where $V$ is a normal vector bundle over $w_0(\Sigma)$ with respect to a fixed smooth background metric on $M$.  

The main tools in proving White's Manifold Structure Theorem are Theorem 1.1 \cite{White1} and Theorem 1.2 \cite{White1}. We claim that in our case all the hypothesis of such theorems (even Hypothesis (C), which is discussed below) are satisfied by replacing the functionals $A_{\gamma}$ and $H(\gamma,\cdot)$ by $A_{g+ \gamma}$ and $H(g+\gamma,\cdot)$, respectively. More precisely, let $G$ be the Banach space of $C^q$ functions $f$ that assign to each $x \in M$, $v\in V_x$, and linear map $L: {T}_x M \to V_x$ a real number $f(x,v,L)$ in such a way that $D_3 f$ is also $C^q$. In order to apply Theorem 1.1 \cite{White1} to our case we need for $\gamma \mapsto A_{g+\gamma}$ to be a smooth map from $\Gamma$ to $G$. Similarly, to apply Theorem 1.2 \cite{White1} we need for $$\gamma \times u \mapsto A(g+\gamma,u)=\int_M A_{g+\gamma} (x,u(x),\nabla u(x)) dx$$ to be $C^2$ and for $\gamma\times u \to H(g+\gamma,u)$ to be $C^q$. This follows since the function $\gamma \mapsto g + \gamma$ is just a translation by a constant fixed metric, so all the differentiability properties on the parameter $\gamma$ are preserved for the translated maps.

To see that Hypothesis (C) also holds in our case, namely, that given $(\gamma_0,[u_0]) \in \mathcal{M}_g(\Sigma, U)$ and $\kappa \in \ker D_2 H(\gamma_0,u_0)$, we can find a family $\gamma_s \in \Gamma$ so that $$\bigg(\frac{\partial^2}{\partial s \partial t}\bigg)_{(s=t=0)} \int_M A_{g+\gamma_s}(x,u_0+t\kappa,\nabla(u_0+t \kappa)))dx \neq 0,$$ we argue that the conformal family of metrics $$g_s(z)=(1+sf(z))(g(z)+\gamma_0(z))$$ constructed by B. White can be chosen so that the function $f$ has support on the open set $U$. In this way $g_s = g + \gamma_s$, with $\gamma_s(z)=\gamma_0(z)+sf(z)g(z) \in \Gamma$, for small values of $s$. Following the computation in \cite{White1} one would have
\begin{align*}
\bigg(\frac{\partial^2}{\partial s \partial t}\bigg)_{(s=t=0)} &\int_M A_{g+\gamma_s}(x,u_0+t\kappa,\nabla(u_0+t \kappa)))dx \\
=&\int_M \frac{n}{2}[\nabla f(E(x,u_0))\cdot D_2E(x,u_0(x))\kappa(x)]A_{g+\gamma_0}(x,u_0(x),\nabla u_0) dx
\end{align*}
Indeed, notice that not only $A_{g+\gamma_0}>0$, as in the original proof, but also the Jacobi vector field $\kappa$ cannot be identically zero over the non-empty open set $w_0^{-1}(U)\subset \Sigma$ by the (weak) principle of unique continuation for Schr\"odinger operators. In this way we can find a $C^q$ function $f$, with $\operatorname{supp}f \subset U$, such that the second integral is not zero.

The last steps of the proof of the Structure Theorem of White adapt to our situation verbatim.

\end{proof}

Finally we prove, 

\begin{proof}[Proof of Theorem \ref{bumpymetrics}]
Consider a sequence $\{\Sigma^n_i\}_{i,n}$, that enumerates all the diffeomorphism types of closed manifolds of dimension $n$. Let $\mathcal{M}(\Sigma^n_i,U)$ be the set  given by Theorem \ref{StructureTheorem} for $\Sigma=\Sigma^n_i$.  Since $\mathcal{M}(\Sigma^n_i,U)$ is separable and the projection $\Pi$ is proper, the regular values of $\Pi$ are generic by the Sard-Smale Theorem. Since $\{\Sigma^n_i \}_{i,n}$ is countable we conclude the proof. 
\end{proof}

\bibliography{main}
\bibliographystyle{siam}
	
\end{document}